\documentclass[11pt,a4paper,leqno]{amsart}

\usepackage{common}
\usepackage{tikz}
\usepackage{float}
\usepackage{tabularx}
\definecolor{zygblue}{RGB}{47,102,144}
\definecolor{zygorange}{RGB}{190,104,18}
\definecolor{zyggreen}{RGB}{46,125,50}
\definecolor{zygred}{RGB}{166,61,74}
\definecolor{zygviolet}{RGB}{107,76,154}

\title[A characterization of two-depth Journ\'{e} packing]
{A characterization of two-depth Journ\'{e} packing for bi-parameter and Zygmund rectangles}
\author{Henri Martikainen}

\address[H.M.]{Department of Mathematics, Washington University in St. Louis,
1 Brookings Drive, St. Louis, MO 63130, USA}
\email{henri@wustl.edu}

\subjclass[2020]{Primary 42B20; Secondary 42B25}
\keywords{Journ\'{e}'s lemma, Zygmund rectangles, incomparable rectangles, sparse selection,
	probabilistic packing, exponential overlap}
\hypersetup{
	pdftitle={A characterization of two-depth Journe packing for bi-parameter and Zygmund rectangles},
	pdfauthor={Henri Martikainen},
	pdfkeywords={Journe's lemma, Zygmund rectangles, incomparable rectangles, sparse selection,
		probabilistic packing, exponential overlap}
}

\begin{document}

\allowdisplaybreaks

\begin{abstract}
	We completely characterize a two-depth version of Journ\'{e}'s covering lemma in the
	bi-parameter setting. Let $\mathcal U$ be an incomparable family of
	bi-parameter dyadic rectangles contained in $\Omega$, and write
	$e_i(I):=e_i(I;\Omega)$ for the two embeddedness depths. For every coordinatewise
	nonincreasing $w\colon\N^2\to[0,\infty)$, the estimate
	\[
		\sum_{I\in\mathcal U}w(e_1(I),e_2(I))|I|\lesssim_w|\Omega|
	\]
	holds uniformly in $(\mathcal U,\Omega)$ if and only if the following fixed-total
	condition holds:
	\[
		\|w\|_{\mathrm{ft}}
		:=\sup_{N\geq0}\sum_{a=0}^Nw(a,N-a)<\infty.
	\]
	For instance, $\|(a+1)^{-s}(b+1)^{-(1-s)}\|_{\mathrm{ft}}
	\nobreak\sim[s(1-s)]^{-1}$ for $0<s<1$, so this allows weights that are not summable in
	either variable, while one-depth theory holds precisely for summable weights. Exactly the
	same characterization holds for
	dyadic Zygmund rectangles $I=I^1\times I^2\times I^3$ with
	$\ell(I^3)=\ell(I^1)\ell(I^2)$.
	We use probabilistic packing methods to prove the sufficiency, and these techniques also
	yield sparse refinements of the preceding estimates, which self-improve to various forms
	of exponential integrability.
\end{abstract}
\maketitle

\section{Introduction}

Journ\'{e}'s covering lemma replaces the disjointness argument available for maximal dyadic
cubes: incomparable product rectangles can overlap many times.
Incomparability means that no rectangle in the family is contained in another; this removes
the most obvious source of multiplicity but still permits heavy overlap. Journ\'{e}-style lemmas
control this multiplicity by giving less weight to rectangles that lie more deeply inside
an enlargement of their union. This principle is fundamental in product $\BMO$, product
Hardy spaces and multi-parameter singular-integral theory. The original bi-parameter
results are due to Journ\'{e} \cites{Jou1985, Journe1986}; see also Pipher
\cite{Pipher1986}, Cabrelli--Lacey--Molter--Pipher \cite{CLMP2006}, and the modern
non-homogeneous formulation of Hyt\"{o}nen--Martikainen
\cite{Hytonen2014}*{Section 8.1}.

A familiar bi-parameter form of Journ\'{e}'s lemma says that, for a decreasing weight
$w\colon\N\to[0,\infty)$, the one-depth estimate
\begin{equation}\label{eq:intro-one-depth-criterion}
	\sum_{I\in\mathcal U}w(e_i(I;\Omega))|I|
	\lesssim_w|\Omega|
\end{equation}
holds uniformly in $(\mathcal U,\Omega)$ if and only if $w\in\ell^1(\N)$.
For a coordinatewise nonincreasing $w\colon\N^2\to[0,\infty)$, this shows that full
summability on $\N^2$ is much more than is needed: the two-depth estimate already follows
from \eqref{eq:intro-one-depth-criterion} if either boundary trace $k\mapsto w(k,0)$ or
$k\mapsto w(0,k)$ is summable, since $w(e_1,e_2)\leq w(e_1,0)$ and
$w(e_1,e_2)\leq w(0,e_2)$.

Using more than one embeddedness depth is not itself new. The higher-parameter rectangular
formulations of Pipher \cite{Pipher1986} and Cabrelli--Lacey--Molter--Pipher
\cite{CLMP2006} use several depth factors; already in three parameters, incomparability
alone gives no one-depth theorem; see Proposition
\ref{prop:subzygmund-family-failure}. Our question is different: in the bi-parameter
setting, where a summable weight already works with one depth, what additional weights
become available when both depths are used? The Zygmund basis is a natural intermediate
test: its rectangles have three coordinate factors but only two independent scales and,
strikingly, its answer is the same as in the bi-parameter setting. Analogous questions that
use more depth information than the classical summable theory requires arise in higher
parameters and are not settled by the cited formulations.

For a nonnegative function $w$ on $\N^2$, define its \emph{fixed-total norm} by
\begin{equation*}
	\|w\|_{\mathrm{ft}}
	:=\sup_{N\geq0}\sum_{a=0}^Nw(a,N-a).
\end{equation*}
A family with $N+1$ layers, one for each depth pair $(a,N-a)$, shows that any uniform
two-depth estimate forces $\|w\|_{\mathrm{ft}}<\infty$. Our main result is the converse:
for every coordinatewise nonincreasing weight, this elementary necessary condition is
already sufficient.

Thus the simplest layered examples detect the entire obstruction. This is striking because
the fixed-total norm sees only one diagonal $a+b=N$ at a time: it neither sums the weight
over $\N^2$ nor requires summability in either variable. Nevertheless, it controls every
incomparable family, regardless of how complicated its overlap may be.

The fixed-total condition is much weaker than summability on $\N^2$. It includes the
critical powers $w_s(a,b)=(a+1)^{-s}(b+1)^{-(1-s)}$, $0<s<1$, and the logarithmic
weight $w_{\log}(a,b)=[1+\log(a+1)]^{-1}(b+1)^{-1}$, even though neither one-variable
factor is summable in either case.

The same fixed-total characterization indeed holds for Zygmund rectangles. Beyond the
Zygmund boundary, however, uniform two-depth packing for sub-Zygmund or unrestricted
tri-parameter initial rectangles forces full summability on $\N^2$; see Proposition
\ref{prop:subzygmund-family-failure}.

\subsection*{The two settings and the characterization}

Fix a measurable set $\Omega$ of finite measure. For a dyadic rectangle basis $\mathcal B$,
let
\[
	M_{\mathcal B}f(x):=\sup_{R\in\mathcal B:\,x\in R}\frac1{|R|}\int_R|f|,
	\qquad \widetilde\Omega_{\mathcal B}:=\{M_{\mathcal B}1_\Omega>1/2\}.
\]
When the basis is clear, we write simply $\widetilde\Omega$.
Fix dyadic grids $\calD^i$ in $\R^{d_i}$, $i=1,2,3$. In the bi-parameter setting,
$\mathcal B=\calD^1\times\calD^2$. For $I=I^1\times I^2\subset\Omega$, let
$I_i^{(k)}$ replace only $I^i$ by its $k$th dyadic ancestor and define
\[
	e_i(I;\Omega):=\max\{k\geq0:I_i^{(k)}\subset\widetilde\Omega\},
	\qquad i=1,2.
\]

In the second setting, the dyadic Zygmund rectangles are the products
$I=I^1\times I^2\times I^3$ satisfying
\begin{equation*}
	\ell(I^3)=\ell(I^1)\ell(I^2);
\end{equation*}
write $\calD_Z$ for their collection. These rectangles are adapted to the Zygmund
dilations $(x_1,x_2,x_3)\mapsto(\delta_1x_1,\delta_2x_2,\delta_1\delta_2x_3)$ and to
the corresponding singular-integral theory; see, e.g., \cites{FEPI, HLMV}. Enlarging
only $I^1$ or $I^2$ enters the sub-Zygmund basis
\[
	\calD_{\mathrm{sZ}}
	:=\{I:\ell(I^3)\leq\ell(I^1)\ell(I^2)\}.
\]
In this setting, $\widetilde\Omega$ denotes the sub-Zygmund halo
$\widetilde\Omega_{\calD_{\mathrm{sZ}}}$.
Using the same coordinate-enlargement notation $I_i^{(k)}$, define the \emph{flat} depths by
\[
	e_i(I;\Omega):=\max\{k\geq0:I_i^{(k)}\subset\widetilde\Omega\},
	\qquad i=1,2.
\]
The setting will always specify which depths are meant.

In either setting, call $(\Omega,\mathcal U)$ an \emph{admissible pair} if $\Omega$ is
measurable with finite measure and $\mathcal U$ is an incomparable family of rectangles
in that setting, all contained in $\Omega$. For an admissible pair and a weight
$w\colon\N^2\to[0,\infty)$, define the associated rectangle weight $W_w$ and
\emph{weighted height} $H_w$ by
\begin{equation}\label{eq:intro-general-weight-height}
	W_w(I;\Omega):=w(e_1(I;\Omega),e_2(I;\Omega)),
	\qquad
	H_w(x):=\sum_{I\in\mathcal U}W_w(I;\Omega)1_I(x).
\end{equation}
We suppress the dependence of $H_w$ on $\mathcal U$ and $\Omega$ from the notation.
Write $w^\top(a,b):=w(b,a)$ and set
\[
	\begin{aligned}
	\mathcal W_{\mathrm{ft}}
	&:=\{w\colon\N^2\to[0,\infty):\|w\|_{\mathrm{ft}}<\infty\},\\
	\mathcal W_2
	&:=\{w\colon\N^2\to[0,\infty):
		 w(a,b+1)\leq w(a,b)\text{ for all }a,b\geq0\},\\
	\mathcal W_{12}
	&:=\{w\in\mathcal W_2:w^\top\in\mathcal W_2\}.
	\end{aligned}
\]
Thus $\mathcal W_2$ and $\mathcal W_{12}$ record monotonicity in the second variable and
in both variables; also $\|w^\top\|_{\mathrm{ft}}=\|w\|_{\mathrm{ft}}$. No tensor-product
structure is imposed (see Section \ref{sec:concrete-fixed-total-weights} for examples).

For $\mathcal G\subset\mathcal U$, write
$\operatorname{sh}(\mathcal G):=\bigcup_{I\in\mathcal G}I$ for its shadow and define its
unweighted overlap function by
\begin{equation*}
	h_{\mathcal G}(x):=\sum_{I\in\mathcal G}1_I(x).
\end{equation*}
For $0<\eta\leq1$, the family $\mathcal G$ is $\eta$-\emph{sparse} if there are pairwise
disjoint measurable sets $E_I\subset I$, $I\in\mathcal G$, such that
$|E_I|\geq\eta|I|$.

\begin{thm}[Characterization of two-depth Journ\'{e} packing]
\label{thm:intro-planar-admissible}
\label{thm:intro-zygmund-admissible}
Let $w\in\mathcal W_{12}$. In both the bi-parameter and Zygmund settings above, with
$W_w$ and $H_w$ as in \eqref{eq:intro-general-weight-height}, the estimate
\begin{equation}\label{eq:intro-planar-general-packing}
	\int_\Omega H_w(x)\,\ud x
	=\sum_{I\in\mathcal U}W_w(I;\Omega)|I|
	\leq C_w|\Omega|
\end{equation}
holds uniformly over all admissible pairs $(\Omega,\mathcal U)$ if and only if
$w\in\mathcal W_{\mathrm{ft}}$. In either setting, the optimal
constant satisfies
\[
	\|w\|_{\mathrm{ft}}\leq C_w\leq2560\|w\|_{\mathrm{ft}}.
\]
If $w\in\mathcal W_{\mathrm{ft}}$, one can also choose a $1/8$-sparse subfamily
$\mathcal G_0\subset\mathcal U$ such that
\begin{equation}
\label{eq:intro-general-sparse}
	\int_\Omega H_w
	\leq2560\|w\|_{\mathrm{ft}}|\operatorname{sh}(\mathcal G_0)|.
\end{equation}
\end{thm}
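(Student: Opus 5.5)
The proof splits into necessity, the lower bound $\|w\|_{\mathrm{ft}}\le C_w$, and sufficiency with the sparse refinement.

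\emph{Necessity.} Fix $N$ and build a layered admissible pair with $N+1$ layers, one for each depth pair $(a,N-a)$, $0\le a\le N$. In the bi-parameter case take $\Omega=[0,1)^2$ (so $\widetilde\Omega$ is essentially $\Omega$). Let layer $a$ consist of rectangles of shape $2^{-a}\times2^{-(N-a)}$ tiling $\Omega$, with $\ell(I^1)\ell(I^2)=2^{-N}$. These rectangles are pairwise incomparable: two rectangles with the same area are never strictly nested, and distinct tiles of one layer are not nested either. A rectangle in layer $a$ has $e_1=a$ and $e_2=N-a$ exactly, since its parents stay inside $[0,1)^2$ for exactly $a$ and $N-a$ steps respectively, and one more step leaves $\widetilde\Omega$. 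Hence the left side of \eqref{eq:intro-planar-general-packing} equals $\sum_a w(a,N-a)|\Omega|$, which gives $C_w\ge\|w\|_{\mathrm{ft}}$. For the Zygmund case, take $I^3$ of length $2^{-N}$ tiling the unit cube, and check that the sub-Zygmund halo of the unit cube is still the unit cube. The check is that any sub-Zygmund rectangle meeting it in more than half its measure lies in it up to null sets, because dyadic rectangles either nest or are disjoint coordinatewise.

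\emph{Sufficiency.} The plan is to reduce to indicator weights using monotonicity. Since $w\in\mathcal W_{12}$, write $w$ as a superposition of indicators of down-sets (staircase regions) in $\N^2$. The key point is to organize by the diagonal $e_1+e_2=N$. For each $N$, set $\mathcal U_N=\{I:e_1(I)+e_2(I)=N\}$. I aim to prove an estimate of the form
\[
\sum_{I\in\mathcal U_N,\,e_1(I)=a}|I|\lesssim 2^{-\epsilon(\cdot)}\ldots,
\]
or more robustly a direct bound
\[
\sum_I w(e_1,e_2)|I|\lesssim\sup_N\sum_a w(a,N-a)\,|\Omega|.
\]
I would prove the latter by a probabilistic packing argument. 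For each rectangle $I$ with depths $(a,b)$, the enlargement $I_1^{(a)}$ (respectively $I_2^{(b)}$) sits in $\widetilde\Omega$, and incomparability plus $|\widetilde\Omega|\le C|\Omega|$ gives one-depth packing with geometric decay. Concretely, for fixed $(a,b)$ the family is controlled by $2^{-\max(a,b)}$-type bounds via the classical Journé argument, and the number of admissible pairs along a diagonal is controlled by the fixed-total sum.

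The sparse selection comes from choosing a random scale $N$ (or random shift) and selecting maximal rectangles within a diagonal band. The $1/8$-sparseness follows from the halo: each selected rectangle keeps a portion not covered by later selections, because its enlargements remain in $\widetilde\Omega$ while $|\Omega\cap I|\ge|I|/2$ fails outside.

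\emph{Main obstacle.} The hard part is summing over diagonals without losing a factor of $\sum_N$. The diagonals must be made to interact: a rectangle with depths $(a,b)$ must be charged to a region whose measure decays in $N=a+b$ only through the weight, not geometrically. I would first attempt an induction on $N$ that peels off the rectangles with $e_1=0$ or $e_2=0$. These are packed by the one-depth argument at cost $w(0,N)+w(N,0)$-type terms. Replacing $\Omega$ by the shadow and the depths by $(e_1-1,e_2)$ or $(e_1,e_2-1)$ would then telescope along the diagonal, producing exactly $\sum_a w(a,N-a)$ with an absolute constant. This telescoping is what I expect to yield $2560$, and verifying that the depths drop by exactly one under the halo change is the delicate step.
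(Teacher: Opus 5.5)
Your necessity argument is correct and is the paper's layered test family. Your Zygmund variant also works: you tile all of $[0,1)^3$ with third-coordinate cubes of side $2^{-N}$, so every rectangle has volume $2^{-2N}$, and the sub-Zygmund halo of $[0,1)^3$ is again $[0,1)^3$ by the coordinatewise density bound.

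The sufficiency half, however, has genuine gaps.

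\emph{The claimed decay is false.} You assert a $2^{-\max(a,b)}$-type bound for the rectangles with depths $(a,b)$. Your own layered family contains, for each $a\leq N$, a layer with depths $(a,N-a)$ and total mass $|\Omega|$. If such decay held, every bounded weight (e.g.\ $w\equiv1$) would pack, contradicting the necessity you just proved.

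\emph{The layer-cake reduction loses the sharp condition.} The packing constant of an indicator $1_D$ is at least $\|1_D\|_{\mathrm{ft}}$, and the supremum over diagonals does not commute with the superposition. Bounding each level set separately therefore yields at best $\int_0^\infty\|1_{\{w>t\}}\|_{\mathrm{ft}}\,\ud t$. For $w_{1/2}$ the level set $\{w_{1/2}>t\}$ contains every full diagonal with $N+2<2/t$, so this integral diverges, although $\|w_{1/2}\|_{\mathrm{ft}}\leq4$.

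\emph{The telescoping step is unsupported.} The induction rests on the claim that, after peeling off $e_1=0$ or $e_2=0$ and changing the ambient set, each remaining depth drops by exactly one. Nothing supports this, and halo depths can change by arbitrary amounts when $\Omega$ changes. In Example \ref{ex:planar-near-far} one fixed pair has $e_2(I)\geq4$ or $e_2(I)=1$ depending on $\Omega$. Since this is your only mechanism for coupling different diagonals, neither the constant $2560$ nor the sparse family is actually derived. A selection by a single random shift or residue class is the one-depth mechanism; keyed to $e_1+e_2$ it costs $\|w\|_{\mathrm{env}}$, which is infinite for $w_s$.

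\emph{What is missing is the coupling of the two depths.}
\begin{enumerate}
\item Order the rectangles by combined generation $\nu_1+\nu_2$ (by $\nu_3$ in the Zygmund case). An intersecting pair $J\prec I$ with, say, $J^1\supsetneq I^1$ and $J^2\subsetneq I^2$ has gaps $p\geq q\geq1$, and is called far if $p>e_1(I)$.
\item A half-slab density argument in the halo shows that far predecessors miss a subset of $I$ of measure greater than $|I|/4$.
\item For the near successors $I$ of a fixed $J$ one has $e_1(I)\geq p\geq q$, hence $W_w(I)\leq w(q,e_2(I))$. The depth $e_1$ is thus traded for the gap $q$, which indexes layers of pairwise disjoint sets $P_I=I^1$ (resp.\ $I^1\times I^3$).
\item The halo also shows that $P_I$ is at most half covered by the $P_K$ with $q(K)>q(I)+e_2(I)$.
\item The fixed-total condition enters exactly here. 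Complete $w/\|w\|_{\mathrm{ft}}$ to unit mass on every diagonal and realize it as the law of the last mark of a random marked set. The events that $a$ is marked and $a+1,\dots,a+b$ are unmarked then select $1/2$-sparse subfamilies of these layers (Lemma \ref{lem:fixed-total-packing}), giving the local bound $4\|w\|_{\mathrm{ft}}|J|$.
\item The alteration Lemma \ref{lem:local-overlap-to-marginals} then produces random $1/8$-sparse subfamilies with inclusion probabilities at least $W_w(I)/(160\|w\|_{\mathrm{ft}})$, which gives $2560$.
\end{enumerate}
None of these steps (the near/far split, the replacement of $e_1$ by the gap $q$, the exclusion-event packing) has a counterpart in your plan.
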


We call $w\in\mathcal W_{\mathrm{ft}}\cap\mathcal W_{12}$ an \emph{admissible weight}.
Since $\operatorname{sh}(\mathcal G_0)\subset\Omega$, \eqref{eq:intro-general-sparse}
implies \eqref{eq:intro-planar-general-packing}. The stronger marginal inclusion
probabilities behind this sparse estimate will be crucial below.

\subsection*{Critical and logarithmic consequences}

For a one-depth weight $w(a,b)=w(a)$, the identity
$\|w\|_{\mathrm{ft}}=\|w\|_{\ell^1(\N)}$ recovers
\eqref{eq:intro-one-depth-criterion}. For $\alpha,\beta\geq0$, put
\[
	w_{\alpha,\beta}(a,b):=(a+1)^{-\alpha}(b+1)^{-\beta},
\]
and define $W_{\alpha,\beta}$ and $H_{\alpha,\beta}$ accordingly. Lemma
\ref{lem:power-weight-fixed-total-range} and the main theorem give the following.

\begin{cor}[Power weights and critical packing]
\label{thm:intro-threshold}
\label{thm:intro-planar}
\label{thm:intro-critical}
In both the bi-parameter and Zygmund settings, a uniform packing estimate for
$W_{\alpha,\beta}$ holds if and only if
\[
	\alpha+\beta\geq1,
	\qquad
	(\alpha,\beta)\notin\{(1,0),(0,1)\}.
\]
In particular, for $0<s<1$, define
\begin{equation*}
	w_s(a,b):=w_{s,1-s}(a,b)
	=(a+1)^{-s}(b+1)^{-(1-s)}
\end{equation*}
and write $W_s:=W_{w_s}$ and $H_s:=H_{w_s}$. In either setting one can choose a
$1/8$-sparse subfamily $\mathcal G_0\subset\mathcal U$ such that
\begin{equation*}
	\int_\Omega H_s(x)\,\ud x
	=\sum_{I\in\mathcal U}W_s(I;\Omega)|I|
	\leq\frac{2560}{s(1-s)}|\operatorname{sh}(\mathcal G_0)|.
\end{equation*}
\end{cor}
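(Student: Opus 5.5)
The corollary follows from Theorem \ref{thm:intro-planar-admissible} once we compute the fixed-total norm of the power weights. That computation is what Lemma \ref{lem:power-weight-fixed-total-range} is quoted for, and I will take it as given in this form: $\|w_{\alpha,\beta}\|_{\mathrm{ft}}<\infty$ exactly when $\alpha+\beta\geq1$ and $(\alpha,\beta)\notin\{(1,0),(0,1)\}$, and $\|w_s\|_{\mathrm{ft}}\leq[s(1-s)]^{-1}$ for $0<s<1$. Every $w_{\alpha,\beta}$ with $\alpha,\beta\geq0$ is coordinatewise nonincreasing, so $w_{\alpha,\beta}\in\mathcal W_{12}$ and the theorem applies. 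In both settings it gives: a uniform packing estimate holds if and only if $\|w_{\alpha,\beta}\|_{\mathrm{ft}}<\infty$. The first claim then follows directly.

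In case the lemma is only stated loosely, here is how I would verify the range. For necessity, suppose $\alpha+\beta<1$. Each term on the diagonal satisfies $(a+1)^{-\alpha}(N-a+1)^{-\beta}\geq(N+1)^{-(\alpha+\beta)}$, so the diagonal sum is at least $(N+1)^{1-\alpha-\beta}\to\infty$. For $(\alpha,\beta)=(1,0)$ the diagonal sum is the harmonic sum $\sum_{a\leq N}(a+1)^{-1}$, which diverges; $(0,1)$ is symmetric.

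For sufficiency I would first reduce to the boundary line. Since $(a+1)^{-\alpha}\leq(a+1)^{-\alpha'}$ whenever $\alpha'\leq\alpha$, the weight only decreases as the exponents increase. So if $\alpha+\beta>1$, or if $\alpha+\beta=1$ with $0<\alpha<1$, we may shrink the exponents to a pair $(s,1-s)$ with $0<s<1$ and $s\leq\alpha$, $1-s\leq\beta$. Such a choice is possible whenever both $\alpha>0$ and $\beta>0$. If instead $\beta=0$ and $\alpha>1$, the diagonal sum is bounded by $\sum_a(a+1)^{-\alpha}<\infty$; the case $\alpha=0$, $\beta>1$ is symmetric.

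For the boundary case, split the diagonal $a+b=N$ into the parts $a\leq N/2$ and $a>N/2$. On $a\leq N/2$ we have $N-a+1\geq(N+2)/2$, so this part is bounded by
\[
\Bigl(\tfrac{N+2}{2}\Bigr)^{-(1-s)}\sum_{a\leq N/2}(a+1)^{-s}.
\]
The sum satisfies $\sum_{a\leq N/2}(a+1)^{-s}\lesssim\frac{1}{1-s}\bigl(\tfrac{N+2}{2}\bigr)^{1-s}$, so this part is $\lesssim\frac1{1-s}$. The part $a>N/2$ is symmetric and gives $\lesssim\frac1s$. Altogether $\|w_s\|_{\mathrm{ft}}\lesssim\frac1s+\frac1{1-s}=\frac{1}{s(1-s)}$.

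The one step that needs care is getting the clean constant $1$ in $\|w_s\|_{\mathrm{ft}}\leq1/(s(1-s))$. With sharper integral comparisons, using $\int_0^{x}t^{-s}\,\ud t=x^{1-s}/(1-s)$ and choosing the split point carefully, the two halves should give exactly $\frac1{1-s}+\frac1s$. If the lemma only gives an absolute multiple of this bound, that multiple would be absorbed into the stated constant $2560$.

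Finally, apply the sparse conclusion \eqref{eq:intro-general-sparse} of Theorem \ref{thm:intro-planar-admissible} with $w=w_s$. It gives a $1/8$-sparse subfamily $\mathcal G_0\subset\mathcal U$ with
\[
\int_\Omega H_s\leq2560\|w_s\|_{\mathrm{ft}}\,|\operatorname{sh}(\mathcal G_0)|\leq\frac{2560}{s(1-s)}\,|\operatorname{sh}(\mathcal G_0)|.
\]
The equality $\int_\Omega H_s=\sum_{I\in\mathcal U}W_s(I;\Omega)|I|$ holds by Tonelli, since every $I\in\mathcal U$ is contained in $\Omega$.
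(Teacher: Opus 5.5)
Your proposal is correct and follows the paper's route exactly: Lemma~\ref{lem:power-weight-fixed-total-range} combined with Theorem~\ref{thm:intro-planar-admissible} applied to $w_{\alpha,\beta}\in\mathcal W_{12}$. Your sketch of that lemma matches its proof, including the split at $N/2$ with $L=(N+2)/2$ and the integral comparison giving exactly $\frac1{1-s}+\frac1s$.

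One small correction: your fallback that an absolute multiple in the bound on $\|w_s\|_{\mathrm{ft}}$ ``would be absorbed into 2560'' is not valid. The stated constant is exactly the theorem's $2560$, so the clean bound $\|w_s\|_{\mathrm{ft}}\leq[s(1-s)]^{-1}$ from \eqref{eq:power-weight-critical-upper} is genuinely needed. Your own integral comparison does supply it.
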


Neither factor of $w_s$ is summable, and \eqref{eq:critical-power-fixed-total} shows that
the order $1/[s(1-s)]$ is optimal. The logarithmic weight
$w_{\log}(a,b):=[1+\log(a+1)]^{-1}(b+1)^{-1}$ satisfies
$\|w_{\log}\|_{\mathrm{ft}}\leq2$; Section
\ref{sec:concrete-fixed-total-weights} proves these bounds.

\subsection*{How the two-depth packing is proved}\label{subsec:intro-critical-proof}

Section \ref{sec:probability} develops an abstract probabilistic toolkit for producing
weighted packing estimates and sparse subfamilies. The two-depth Journ\'{e} argument is
organized around the alteration result in Lemma
\ref{lem:local-overlap-to-marginals}. Once intersecting ordered pairs have been divided
into near and far pairs, embeddedness supplies the required major subsets for the far
predecessors, while the lemma reduces the remaining task to the localized packing estimate
\eqref{eq:abstract-local-near-overlap} for the later near successors of each fixed
rectangle. The alteration argument itself is relatively short, but its conclusion is
precisely what is needed here: it produces random sparse subfamilies together with a
quantitative marginal inclusion probability for every rectangle.

The crux of the two-depth Journ\'{e} proofs is therefore to verify
\eqref{eq:abstract-local-near-overlap} in the concrete geometry. Much of this verification
is organized by Lemma \ref{lem:exclusion-packing}, an abstract packing result whose
geometric assumptions mirror the disjointness and scale separation available in Journ\'{e}
configurations. Its coefficients are probabilities of an exclusion family. Informally,
the event $\mathsf S_{a,b}$ selects layer $a$ while excluding the next $b$ layers; formally,
\[
	\mathsf S_{a,b}\cap\mathsf S_{a',b'}=\varnothing
	\qquad\text{whenever }a<a'\leq a+b,
\]
independently of $b'$. The remaining probabilistic task is therefore to construct an
appropriate exclusion family for each admissible weight. Lemmas
\ref{lem:marked-weight-realization} and \ref{lem:fixed-total-completion} accomplish exactly
this: after a harmless normalization, every admissible weight is dominated by the
probabilities of such a family on a suitable probability space. Lemma
\ref{lem:exclusion-packing} can therefore be applied uniformly to all admissible weights,
furnishing the local estimate required by Lemma
\ref{lem:local-overlap-to-marginals}.

\subsection*{Exponential integrability}

For $w\in\mathcal W_{12}$, set
\[
	w^\Delta(N):=\max_{0\leq a\leq N}w(a,N-a),
	\qquad
	\|w\|_{\mathrm{env}}:=\|w^\Delta\|_{\ell^1(\N)}.
\]
Since $\|w\|_{\mathrm{ft}}\leq\|w\|_{\mathrm{env}}\leq\|w\|_{\ell^1(\N^2)}$, envelope
summability lies strictly between fixed-total admissibility and full summability: $w_s$
fails it, while $(a+b+1)^{-2}$ satisfies it but is not in $\ell^1(\N^2)$. It nevertheless
forces every row and column of $w$ to be summable, since $w(a,b)\leq w^\Delta(a+b)$.
Its role below is therefore not to enlarge the $L^1$ range, but to recover full exponential
integrability in the Zygmund setting.

The exponential estimates are a separate refinement of the $L^1$ classification. Here
\emph{full}, \emph{square-root} and \emph{cube-root exponential} mean bounds for
$e^{cF}$, $e^{c\sqrt F}$ and $e^{cF^{1/3}}$. The upper block of the table concerns the
unweighted overlap $h_{\mathcal G}$, so $F=h_{\mathcal G}$, and separates sparseness from
sparseness together with incomparability. The sparse entries follow from maximal-function
estimates. A beautiful recent theorem of Rey \cite{Rey2026antichain} proves the
bi-parameter incomparable entry and conjectures the tri-parameter one. For sparse
incomparable Zygmund families, no improvement over the square-root estimate supplied by
sparseness is presently known. The lower block records the weighted results proved here;
in its two rows, $F=H_w/\|w\|_{\mathrm{ft}}$ and
$F=H_w/\|w\|_{\mathrm{env}}$, respectively.

\begin{center}
	\small
	\renewcommand{\arraystretch}{1.25}
	\begin{tabularx}{\textwidth}{@{}l>{\raggedright\arraybackslash}X
		>{\raggedright\arraybackslash}X@{}}
		\hline
		\multicolumn{3}{@{}l}{\emph{Unweighted overlap $h_{\mathcal G}$}} \\[2pt]
		Setting & Sparse $\mathcal G$ (known) & Sparse and incomparable $\mathcal G$ \\
		\hline
		Bi-parameter
		& square-root exponential
		& full exponential (Rey's theorem) \\
	Zygmund
		& square-root exponential
		& square-root exponential; improvement open \\
		Tri-parameter
		& cube-root exponential
		& square-root exponential (Rey's conjecture) \\[3pt]
		\hline
		\multicolumn{3}{@{}l}{\emph{Weighted Journ\'{e} heights proved here}} \\[2pt]
		Weight & Bi-parameter & Zygmund \\
		\hline
		Every admissible weight & full exponential & square-root exponential \\
		$w^\Delta\in\ell^1(\N)$ & full exponential & full exponential \\
		\hline
	\end{tabularx}
\end{center}

The passage from the upper to the lower block is not automatic, even in the bi-parameter
setting. The Journ\'{e} argument must first be organized so as to produce sparse
subfamilies. Moreover, one sparse family controlling only the total weighted mass is
insufficient: the convexity transfer from $H_w$ to $h_{\mathcal G}$ requires the stronger
random construction developed here, with a quantitative marginal inclusion probability
for every rectangle. With this machinery, every admissible weight in the bi-parameter
setting reaches the full exponential scale made available by Rey's incomparable-overlap
theorem. General admissible weights reach the universal square-root scale in the Zygmund
setting, while the stronger envelope condition recovers full exponential integrability
there. Thus the square-root conclusion for general weights is not the end of the Zygmund
story: additional depth decay already recovers the full exponential scale, even though the
corresponding unweighted overlap problem for incomparable families remains open.

In the Zygmund comparison below, $\cosh(c\sqrt t)$ is the convex substitute for
$e^{c\sqrt t}$.

\begin{thm}[Sparse exponential comparisons]\label{thm:intro-exponential-comparisons}
	There exist absolute constants $c,C>0$ such that the following holds. Let $w$ be a
	nonzero admissible weight. For every admissible pair $(\Omega,\mathcal U)$ in the
	bi-parameter setting, one can choose a
	$1/8$-sparse $\mathcal G_0\subset\mathcal U$ such that
	\begin{equation*}
		\int_{\R^{d_1+d_2}}\bigl(e^{c \|w\|_{\mathrm{ft}}^{-1}H_w(x)}-1\bigr)\,\ud x
		\leq\int_{\R^{d_1+d_2}}\bigl(e^{C h_{\mathcal G_0}(x)}-1\bigr)\,\ud x
		\lesssim|\operatorname{sh}(\mathcal G_0)|.
	\end{equation*}
	For every admissible pair $(\Omega,\mathcal U)$ in the Zygmund setting, one can choose a
	$1/8$-sparse
	$\mathcal G_0\subset\mathcal U$ such that
	\begin{equation*}
		\begin{aligned}
		&\int_{\R^{d_1+d_2+d_3}}
		\left[\cosh\!\bigl(c\sqrt{\|w\|_{\mathrm{ft}}^{-1}H_w(x)}\bigr)-1\right]\,\ud x\\
		&\quad\leq\int_{\R^{d_1+d_2+d_3}}
		\left[\cosh\!\bigl(C\sqrt{h_{\mathcal G_0}(x)}\bigr)-1\right]\,\ud x
		\lesssim|\operatorname{sh}(\mathcal G_0)|.
		\end{aligned}
	\end{equation*}
\end{thm}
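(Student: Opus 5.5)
The plan is to reduce both comparisons to the random sparse construction behind Theorem \ref{thm:intro-planar-admissible}, then add a convexity transfer and known unweighted overlap bounds. The sparse machinery (Lemma \ref{lem:local-overlap-to-marginals} fed by Lemma \ref{lem:exclusion-packing} and the exclusion families of Lemmas \ref{lem:marked-weight-realization} and \ref{lem:fixed-total-completion}) gives more than the mass bound \eqref{eq:intro-general-sparse}. It gives a random $1/8$-sparse subfamily $\mathcal G\subset\mathcal U$ and an absolute constant $c_0>0$ such that every $I\in\mathcal U$ satisfies
\[
	\mathbb P(I\in\mathcal G)\geq c_0\,\|w\|_{\mathrm{ft}}^{-1}W_w(I;\Omega).
\]
I will take this marginal bound as the input. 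After normalizing $\|w\|_{\mathrm{ft}}=1$, we also have $W_w\leq1$.

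\textbf{Convexity transfer.} Let $\Phi(t)=e^{ct}-1$ in the bi-parameter case and $\Phi(t)=\cosh(c\sqrt t)-1$ in the Zygmund case. Both are convex, increasing, vanish at $0$, and satisfy $\Phi(\lambda t)\le\lambda\Phi(t)$ for $\lambda\in[0,1]$ (for $\cosh(\sqrt{\cdot})$ this follows from the positive power series in $t$). Fix $x$ and compare $H_w(x)=\sum_{I\ni x}W_w(I)$ with $\mathbb E\,h_{\mathcal G}(x)\geq c_0H_w(x)$. By Jensen,
\[
	\Phi\bigl(c_0H_w(x)\bigr)\leq\Phi\bigl(\mathbb E h_{\mathcal G}(x)\bigr)\leq\mathbb E\,\Phi\bigl(h_{\mathcal G}(x)\bigr).
\]
Integrating in $x$ gives $\int\Phi(c_0H_w)\le\mathbb E\int\Phi(h_{\mathcal G})$, so some realization $\mathcal G_0$ satisfies $\int\Phi(c_0H_w)\le\int\Phi(h_{\mathcal G_0})$. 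Rescaling constants, with $c=c_0C$, yields the first inequality in each chain. Two points need care. First, the expected shadow must also be controlled, so I will choose $\mathcal G_0$ to make a combination of the two quantities small simultaneously. Second, $\cosh(C\sqrt{\cdot})$ is not convex near $0$ in $t$. I will check this: $\cosh\sqrt t=\sum t^k/(2k)!$ is convex in $t$, so Jensen applies directly.

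\textbf{Unweighted overlap bounds.} For the second inequalities, $\mathcal G_0$ is $1/8$-sparse and consists of incomparable bi-parameter rectangles. Rey's theorem \cite{Rey2026antichain} then gives $\int(e^{Ch_{\mathcal G_0}}-1)\lesssim|\operatorname{sh}(\mathcal G_0)|$ for small $C$. In the Zygmund case only sparseness is used. A good-$\lambda$/John--Nirenberg iteration based on the $L^2$-boundedness of the strong maximal function $M_{\calD_Z}$ (or $M_{\calD_{\mathrm{sZ}}}$, which is dominated by the bi-parameter strong maximal function in the variables $(x_1,x_3)$ and $(x_2,x_3)$ after grouping) gives $|\{h_{\mathcal G_0}>k\}|\lesssim e^{-c\sqrt k}|\operatorname{sh}(\mathcal G_0)|$. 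The mechanism is that each level $\{h>k+j\}$ lies in $\{M1_{\{h>k\}}>\eta\}$ with $L\log L$-type control. Summing over $k$ gives the $\cosh(C\sqrt{h})$ bound.

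\textbf{Main obstacle.} The delicate step is guaranteeing the \emph{per-rectangle} marginal inclusion probability uniformly for every admissible weight. Only this, not a single good sparse family, lets Jensen be applied pointwise. I will extract it from the conclusion of Lemma \ref{lem:local-overlap-to-marginals}, checking that its hypothesis \eqref{eq:abstract-local-near-overlap} is verified with constants independent of $w$. The secondary difficulty is selecting one realization that controls the exponential integral and the shadow at once. For that I will use $\mathbb E|\operatorname{sh}(\mathcal G)|\le|\Omega|$ together with $\int\Phi(h_{\mathcal G})\lesssim|\operatorname{sh}(\mathcal G)|$, which holds deterministically for every realization, so any realization minimizing the left side works.
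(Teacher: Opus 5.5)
Your architecture matches the paper's. You take the marginal bound $\mathbb P(I\in\mathcal G(\theta))\geq\kappa_wW_w(I;\Omega)$, with $\kappa_w=(160\|w\|_{\mathrm{ft}})^{-1}$, from Theorems \ref{thm:planar-admissible-weight} and \ref{thm:zygmund-admissible-weight}. You then apply pointwise Jensen for an increasing convex $\Phi$, as in Lemma \ref{lem:inclusion-probabilities-to-convex-overlap}. Finally you use a deterministic unweighted overlap bound for every realization; in the bi-parameter case this is Rey's estimate (Lemma \ref{lem:incomparable-exponential-overlap}).

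\textbf{Choice of realization.} Your last paragraph chooses the realization in the wrong direction. A realization \emph{minimizing} $\int\Phi(h_{\mathcal G})$, or a combination with the shadow, can violate the first inequality. When $\mathcal U$ is finite, the construction in Lemma \ref{lem:local-overlap-to-marginals} gives $\mathcal G(\theta)=\varnothing$ with positive probability. The right side then vanishes, while $\int\Phi(c_0H_w)>0$ as soon as some $W_w(I;\Omega)>0$. What you need is $\theta_0$ with $\int\Phi(h_{\mathcal G(\theta_0)})\geq\mathbb E\int\Phi(h_{\mathcal G})$. Such $\theta_0$ exists because this mean is finite, by the deterministic bound and $|\operatorname{sh}(\mathcal G(\theta))|\leq|\Omega|$; your convexity paragraph already states this correctly. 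No simultaneous control of the shadow is needed, because the statement's right-hand side is $|\operatorname{sh}(\mathcal G_0)|$ for whichever $\mathcal G_0$ is chosen. A smaller slip: in the Zygmund case the constant is $c=C\sqrt{c_0}$, not $c_0C$.

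\textbf{The Zygmund unweighted estimate.} This is the genuine gap. The inclusion $\{h>k+j\}\subset\{M1_{\{h>k\}}>\eta\}$ only gives $|\{h>k+j\}|\leq C_\eta|\{h>k\}|$ with $C_\eta\geq1$, so it yields no decay. $L^2$ boundedness cannot fix the exponent either. The full tri-parameter strong maximal function is equally $L^2$ bounded, yet the maximal-function route gives only cube-root integrability there; the exponent is set by the endpoint behaviour as $p\downarrow1$. Your alternative through $M_{\calD_{\mathrm{sZ}}}$ is worse. Testing on $[0,\delta_1)\times[0,\delta_2)\times[0,\delta_1\delta_2)$ shows the sub-Zygmund maximal function has $L(\log L)^2$ behaviour, like the tri-parameter one. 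In any case only $M_{\calD_Z}$ is relevant, since $\mathcal G_0\subset\calD_Z$.

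The missing input is the endpoint growth $\|M_{\calD_Z}\|_{L^p\to L^p}\lesssim(p-1)^{-2}$ as $p\downarrow1$, which follows from C\'ordoba's weak $L\log L$ bound plus interpolation. Insert it by duality. Take $g\geq0$ with $\|g\|_{q'}=1$ and disjoint $E_I\subset I$ with $|E_I|\geq|I|/8$. Then $\int h_{\mathcal G}g=\sum_I|I|\langle g\rangle_I\leq8\int_{\operatorname{sh}(\mathcal G)}M_{\calD_Z}g$, hence $\|h_{\mathcal G}\|_{L^q}\lesssim q^2|\operatorname{sh}(\mathcal G)|^{1/q}$. Now sum $\cosh(b\sqrt t)-1=\sum_{k\geq1}b^{2k}t^k/(2k)!$ termwise, using $(2k)!\geq(2k/e)^{2k}$. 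This gives $\int[\cosh(b\sqrt{h_{\mathcal G}})-1]\leq\frac14|\operatorname{sh}(\mathcal G)|$ for a small absolute $b$; this is Lemma \ref{lem:sparse-zygmund-square-root-overlap}. With it in place of the good-$\lambda$ sketch, and the selection step corrected, your argument is complete.
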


Theorems \ref{thm:planar-admissible-exponential} and
\ref{thm:zygmund-admissible-exponential} give quantitative forms. Theorem
\ref{thm:diagonal-envelope-exponential} gives full exponential integrability when
$\|w\|_{\mathrm{env}}<\infty$.

\subsection*{Notation and conventions}

We use $\N=\{0,1,2,\ldots\}$ and write $\ave{f}_Q:=|Q|^{-1}\int_Qf$ for averages.
As usual, $A\lesssim_\tau B$ means $A\leq C_\tau B$ for a constant depending only on
$\tau$ and the fixed dimensions; $A\gtrsim_\tau B$ means $B\lesssim_\tau A$, and
$A\sim_\tau B$ means that both estimates hold.

\subsection*{Acknowledgements}

This material is based upon work supported by the National Science Foundation under Grant
No. 2247234. H.M. was also supported by the Simons Foundation through MP-TSM-00002361
(travel support for mathematicians). The author thanks Kangwei Li for helpful discussions
and comments on the manuscript.
The author used OpenAI's Codex as a research and editorial aid in developing this manuscript---
for example, to explore formulations, stress-test proofs and examples, and improve the
organization and exposition.

\section{Probabilistic tools for weighted packing}\label{sec:probability}

Subsection \ref{sec:exclusion-events} isolates the abstract exclusion mechanism. A family of
events satisfying one simple exclusion property selects sparse subfamilies whenever two
fixed geometric conditions hold. The geometry remains unchanged, while the probabilities
of the events determine the coefficients in the resulting packing estimate.

Subsections \ref{sec:random-marked-packing} and \ref{sec:marked-weight-realization}
construct the exclusion events needed for two-depth packing from random marked
nonnegative integers. Subsection \ref{sec:fixed-total-criterion} shows that every
$w\in\mathcal W_{\mathrm{ft}}\cap\mathcal W_2$ can be realized in this way after
normalization and completion. This gives the exact fixed-total packing criterion. The two
Journ\'{e} configurations apply it to $w$ and $w^\top$, which explains the assumption
$w\in\mathcal W_{12}$ in the covering theorems.

Subsection \ref{sec:overlap-to-sparse} contains the sparse-selection mechanism and
organizes the later geometric proofs. Lemma \ref{lem:local-overlap-to-marginals} asks for
major subsets avoiding far predecessors in condition~(i) and a local weighted packing of
the later near successors in condition~(ii). In the concrete Journ\'{e} arguments,
embeddedness verifies condition~(i), while the fixed-total packing estimates verify
condition~(ii). The lemma produces random sparse subfamilies with an individual inclusion
probability for every original rectangle. Subsection \ref{sec:convex-transfer} isolates the
independent convex transfer principle that converts these probabilities into weighted
exponential estimates. Finally, Subsection
\ref{sec:stationary-one-depth-selection} constructs exclusion events on the integers
from a summable weight for the diagonal-envelope and one-depth results. All of these
constructions use only elementary probability and measure theory.

\subsection{Exclusion events and sparse selection}
\label{sec:exclusion-events}

Let $\Gamma\subset\Z$, and let $(\Sigma,\mathcal F,\mathbb P)$ be a probability space.
Suppose that measurable events
\[
	\mathsf S_{a,b}\in\mathcal F,
	\qquad a\in\Gamma,\quad b\geq0,
\]
are given with the following property: for any $a,a'\in\Gamma$ and $b\geq0$ with
$a<a'\leq a+b$, we have
\begin{equation}\label{eq:abstract-exclusion-property}
	\mathsf S_{a,b}\cap\mathsf S_{a',b'}=\varnothing
	\qquad\text{for every integer }b'\geq0.
\end{equation}
We call such a family an \emph{exclusion family on $\Gamma$}, and write
\[
	\pi(a,b):=\mathbb P(\mathsf S_{a,b}).
\]
The following lemma is the complete interface between these events and the geometric
packing argument. In the bi-parameter applications the sets $P_I$ will be coordinate
cubes; in the Zygmund applications they will be lower-dimensional dyadic rectangles.

\begin{lem}[Packing from exclusion events]\label{lem:exclusion-packing}
	Let $X$ be a measurable subset of a Euclidean space with $|X|<\infty$, and let
	$\mathcal A$ be a countable index set. For each $I\in\mathcal A$, let $P_I\subset X$
	be a measurable set of positive measure, let $a(I)\in\Z$, and let $b(I)\geq0$ be an
	integer. Suppose that the following hold:
	\begin{enumerate}[(i)]
		\item for every fixed $m\in\Z$, the sets $P_I$ with $a(I)=m$ are pairwise
		disjoint;
		\item for every $I\in\mathcal A$,
		\begin{equation}\label{eq:exclusion-overlap-bound}
			\Biggl|P_I\cap
			\bigcup_{\substack{K\in\mathcal A\\a(K)>a(I)+b(I)}}P_K\Biggr|
			\leq\frac12|P_I|.
		\end{equation}
	\end{enumerate}
	Let $(\Sigma,\mathcal F,\mathbb P)$ be an arbitrary probability space equipped with
	an exclusion family $(\mathsf S_{a,b})$ on $\{a(I):I\in\mathcal A\}$.
	Then one can associate to each $\omega\in\Sigma$ a subfamily
	$\mathcal S(\omega)\subset\mathcal A$ in such a way that every event
	$\{\omega:I\in\mathcal S(\omega)\}$ is measurable,
	\begin{equation}\label{eq:exclusion-marginals}
		\mathbb P\bigl(\{\omega:I\in\mathcal S(\omega)\}\bigr)
		=\pi(a(I),b(I)),
		\qquad I\in\mathcal A,
	\end{equation}
	and the indexed family $\{P_I:I\in\mathcal S(\omega)\}$ is $1/2$-sparse for every
	$\omega\in\Sigma$. In particular, for some $\omega_0\in\Sigma$,
	\begin{equation}\label{eq:exclusion-packing}
		\sum_{I\in\mathcal A}\pi(a(I),b(I))|P_I|
		\leq\sum_{I\in\mathcal S(\omega_0)}|P_I|
		\leq2|X|.
	\end{equation}
\end{lem}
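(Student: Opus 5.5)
The natural choice is $\mathcal S(\omega):=\{I\in\mathcal A:\omega\in\mathsf S_{a(I),b(I)}\}$. With this definition each event $\{\omega:I\in\mathcal S(\omega)\}=\mathsf S_{a(I),b(I)}$ is measurable, and the marginal identity \eqref{eq:exclusion-marginals} holds by definition. The whole content of the lemma is therefore the sparseness of $\{P_I:I\in\mathcal S(\omega)\}$ for each fixed $\omega$. The final estimate \eqref{eq:exclusion-packing} then follows by averaging. Since $\mathcal A$ is countable, Tonelli gives
\[
\mathbb E\sum_{I\in\mathcal S(\omega)}|P_I|=\sum_{I\in\mathcal A}\pi(a(I),b(I))|P_I|,
\]
so some $\omega_0$ has $\sum_{I\in\mathcal S(\omega_0)}|P_I|$ at least the mean. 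For a $1/2$-sparse family with disjoint $E_I\subset P_I\subset X$ we have $\sum|P_I|\le 2\sum|E_I|\le 2|X|$.

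For sparseness, fix $\omega$ and set $E_I:=P_I\setminus\bigcup\{P_K:K\in\mathcal S(\omega),\,a(K)>a(I)\}$ for $I\in\mathcal S(\omega)$. The plan is to show that only indices with $a(K)>a(I)+b(I)$ can actually occur here. Suppose $K\in\mathcal S(\omega)$ with $a(I)<a(K)\le a(I)+b(I)$. Then $\omega\in\mathsf S_{a(I),b(I)}\cap\mathsf S_{a(K),b(K)}$, and this intersection is empty by \eqref{eq:abstract-exclusion-property}, a contradiction. Hence the removed set is contained in $P_I\cap\bigcup_{a(K)>a(I)+b(I)}P_K$, and hypothesis (ii) gives $|E_I|\ge\frac12|P_I|$.

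Disjointness of the $E_I$ splits into two cases. If $I\ne J$ in $\mathcal S(\omega)$ with $a(I)<a(J)$, then $E_I$ avoids $P_J\supset E_J$. If $a(I)=a(J)$, then $P_I$ and $P_J$ are disjoint by (i). One subtlety arises when distinct indices share both $a$ and $P$: hypothesis (i) applied to distinct indices forces their $P$'s to be disjoint, and since they have positive measure they cannot coincide, so no issue arises. Measurability of $E_I$ holds because the union is countable.

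There is no serious obstacle. The only step needing care is recognizing that the exclusion property exactly kills the ``near later layers'' $a(I)<a(K)\le a(I)+b(I)$, so that the remaining overlap is controlled by (ii).
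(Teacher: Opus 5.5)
Your proposal is correct and follows the paper's proof essentially step for step. It uses the same selection $\mathcal S(\omega)=\{I:\omega\in\mathsf S_{a(I),b(I)}\}$, the exclusion property to push every selected later index beyond $a(I)+b(I)$, and the same major sets $E_I$ obtained by removing selected sets with larger $a$-index. The final step is also the paper's: term-by-term averaging, together with the uniform bound $\sum_{I\in\mathcal S(\omega)}|P_I|\leq2|X|$, which makes the mean finite and yields an $\omega_0$ at or above it.
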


The first hypothesis says that the indexed sets in each fixed $a$-layer are disjoint.
The integer $b(I)$ marks the end of the uncontrolled range for $P_I$: the second
hypothesis says that all sets beyond that range together cover at most half of $P_I$.
These are the only geometric facts used by the probabilistic selection.

\begin{proof}
	For $I\in\mathcal A$, put
	\[
		\mathsf S_I:=\mathsf S_{a(I),b(I)},
		\qquad
		\rho_I:=\mathbb P(\mathsf S_I).
	\]
	By definition, $\rho_I=\pi(a(I),b(I))$.

	Fix $\omega\in\Sigma$ and define the now deterministic set of selected indices
	\[
		\mathcal S(\omega):=\{I\in\mathcal A:\omega\in\mathsf S_I\}.
	\]
	Thus, for every $I\in\mathcal A$, we have
	$\{\omega:I\in\mathcal S(\omega)\}=\mathsf S_I$, and so
	\[
		\mathbb P\bigl(\{\omega:I\in\mathcal S(\omega)\}\bigr)
		=\mathbb P(\mathsf S_I)=\rho_I=\pi(a(I),b(I)).
	\]
	This proves \eqref{eq:exclusion-marginals}. Suppose that
	$I,K\in\mathcal S(\omega)$ and $a(I)<a(K)$. If
	$a(K)\leq a(I)+b(I)$, then \eqref{eq:abstract-exclusion-property} would make
	$\mathsf S_I$ and $\mathsf S_K$ disjoint, contradicting
	$\omega\in\mathsf S_I\cap\mathsf S_K$. Therefore
	\[
		a(K)>a(I)+b(I).
	\]
	For each $I\in\mathcal S(\omega)$, remove from $P_I$ all selected sets with larger
	$a$-index and denote the resulting measurable set by
	\[
		E_I(\omega):=
		P_I\setminus
		\bigcup_{\substack{K\in\mathcal S(\omega)\\a(K)>a(I)}}P_K.
	\]
	For this fixed $\omega$, the union is countable, so $E_I(\omega)$ is a measurable
	subset of $X$. Every $K$ in this union satisfies
	$a(K)>a(I)+b(I)$, so \eqref{eq:exclusion-overlap-bound} gives
	$|E_I(\omega)|\geq|P_I|/2$.

	We check that the sets $E_I(\omega)$, indexed by $I\in\mathcal S(\omega)$, are
	pairwise disjoint. Let $I,K$ be distinct members of $\mathcal S(\omega)$. If
	$a(I)=a(K)$, hypothesis (i) gives $P_I\cap P_K=\varnothing$, and hence
	$E_I(\omega)\cap E_K(\omega)=\varnothing$. If, say, $a(I)<a(K)$, the union defining
	$E_I(\omega)$ contains $P_K$, so $E_I(\omega)\cap P_K=\varnothing$ and again
	$E_I(\omega)\cap E_K(\omega)=\varnothing$. Together with
	$|E_I(\omega)|\geq|P_I|/2$, this proves that
	$\{P_I:I\in\mathcal S(\omega)\}$ is $1/2$-sparse. In particular,
	\[
		M(\omega):=\sum_{I\in\mathcal S(\omega)}|P_I|
		\leq2\sum_{I\in\mathcal S(\omega)}|E_I(\omega)|
		\leq2|X|.
	\]

	The function $M$ is measurable as a countable sum of measurable nonnegative functions.
	Since the series is nonnegative, we may integrate it term by term:
	\begin{align*}
		\sum_{I\in\mathcal A}\pi(a(I),b(I))|P_I|
		&=\sum_{I\in\mathcal A}\rho_I|P_I|
		=\int_\Sigma\sum_{I\in\mathcal A}1_{\mathsf S_I}(\omega)|P_I|\,
		\ud\mathbb P(\omega)\\
		&=\int_\Sigma M(\omega)\,\ud\mathbb P(\omega)=\mathbb E M.
	\end{align*}
	Since $M$ is bounded, some $\omega_0\in\Sigma$ satisfies
	$M(\omega_0)\geq\mathbb E M$. Hence
	\[
		\sum_{I\in\mathcal A}\pi(a(I),b(I))|P_I|
		=\mathbb E M\leq M(\omega_0)\leq2|X|,
	\]
	which is \eqref{eq:exclusion-packing}.
\end{proof}

\subsection{Random marked integers}
\label{sec:random-marked-packing}

Let $(\Sigma,\mathcal F,\mathbb P)$ be a probability space. Suppose that each outcome
$\omega\in\Sigma$ is assigned a set $\mathcal T(\omega)\subset\N$ such that
$0\in\mathcal T(\omega)$. For $a\geq0$, define the marking event
\[
	\mathsf M_a:=\{\omega\in\Sigma:a\in\mathcal T(\omega)\}.
\]
We assume that $\mathsf M_a\in\mathcal F$ for every $a\geq0$ and call $\mathcal T$ a
random set of marked integers. Thus $\mathsf M_0=\Sigma$. The set $\mathcal T(\omega)$
need not be infinite. For integers $a,b\geq0$, define the event
\begin{equation*}
	\mathsf E_{a,b}
	:=\mathsf M_a\cap\bigcap_{j=1}^b\mathsf M_{a+j}^{\,c}.
\end{equation*}
Here and below an intersection over an empty index set is $\Sigma$, so
$\mathsf E_{a,0}=\mathsf M_a$. Thus $\mathsf E_{a,b}$ is the event that $a$ is marked and
the following $b$ integers $a+1,\ldots,a+b$ are unmarked. In particular,
$\mathsf E_{a,b}\in\mathcal F$. Set
\[
	\pi(a,b):=\mathbb P(\mathsf E_{a,b}),
	\qquad a,b\geq0.
\]

For any nonnegative integers $a,b,a'$ with $a<a'\leq a+b$, we have
\begin{equation}\label{eq:marked-event-exclusion}
	\mathsf E_{a,b}\cap\mathsf E_{a',b'}=\varnothing
	\qquad\text{for every integer }b'\geq0.
\end{equation}
The disjointness property follows immediately from this marked formulation. Fix
$a<a'\leq a+b$ and $\omega\in\mathsf E_{a,b}$. Then $a'$ is unmarked, whereas
membership in $\mathsf E_{a',b'}$ would require $a'$ to be marked, regardless of
$b'$. Hence $\omega\notin\mathsf E_{a',b'}$ for every $b'\geq0$, which proves
\eqref{eq:marked-event-exclusion}.

Thus the events $(\mathsf E_{a,b})_{a,b\geq0}$ form an exclusion family on $\N$, and the
abstract packing lemma immediately gives the following consequence.

\begin{cor}[Packing from random marked integers]\label{lem:random-marked-packing}
	Let $X$ be a measurable subset of a Euclidean space with $|X|<\infty$, and let
	$(P_I)_{I\in\mathcal A}$, with associated integers $a(I),b(I)\in\N$, satisfy the
	geometric hypotheses of Lemma
	\ref{lem:exclusion-packing}. Then the random marked set above produces random
	$1/2$-sparse indexed families $\mathcal S_{\mathcal T}(\omega)$ with
	\[
		\mathbb P\bigl(\{\omega:I\in\mathcal S_{\mathcal T}(\omega)\}\bigr)
		=\pi(a(I),b(I)),
		\qquad I\in\mathcal A.
	\]
	In particular, for some $\omega_0\in\Sigma$,
	\[
		\sum_{I\in\mathcal A}\pi(a(I),b(I))|P_I|
		\leq\sum_{I\in\mathcal S_{\mathcal T}(\omega_0)}|P_I|
		\leq2|X|.
	\]
\end{cor}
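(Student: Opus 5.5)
This corollary is a direct specialization of Lemma \ref{lem:exclusion-packing}; the only point to check is that the marked events form an exclusion family on the relevant index set. I will take $\Gamma:=\{a(I):I\in\mathcal A\}\subset\N$ and use the events $\mathsf S_{a,b}:=\mathsf E_{a,b}$ for $a\in\Gamma$ and $b\geq0$.

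The first step is to confirm measurability. Each $\mathsf E_{a,b}$ is a finite intersection of $\mathsf M_a$ and complements of $\mathsf M_{a+j}$, all of which lie in $\mathcal F$, so $\mathsf E_{a,b}\in\mathcal F$.

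The second step is the exclusion property \eqref{eq:abstract-exclusion-property} on $\Gamma$. Take $a,a'\in\Gamma$ and $b\geq0$ with $a<a'\leq a+b$. Then \eqref{eq:marked-event-exclusion} gives $\mathsf E_{a,b}\cap\mathsf E_{a',b'}=\varnothing$ for every $b'\geq0$. Restricting the family from $\N$ to the subset $\Gamma$ preserves this property, so $(\mathsf E_{a,b})$ is an exclusion family on $\Gamma$ with $\pi(a,b)=\mathbb P(\mathsf E_{a,b})$, matching the notation of the lemma.

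The third step is to apply Lemma \ref{lem:exclusion-packing}. Its geometric hypotheses (i) and (ii) are assumed in the corollary, and the integers satisfy $a(I),b(I)\in\N\subset\Z$. The lemma then provides, for each $\omega$, a subfamily $\mathcal S_{\mathcal T}(\omega):=\{I:\omega\in\mathsf E_{a(I),b(I)}\}$ with the following properties:
\begin{enumerate}[(a)]
	\item each event $\{\omega:I\in\mathcal S_{\mathcal T}(\omega)\}$ is measurable;
	\item the marginals are $\mathbb P(\{\omega:I\in\mathcal S_{\mathcal T}(\omega)\})=\pi(a(I),b(I))$, by \eqref{eq:exclusion-marginals};
	\item the family $\{P_I:I\in\mathcal S_{\mathcal T}(\omega)\}$ is $1/2$-sparse for every $\omega$;
	\item the packing bound \eqref{eq:exclusion-packing} holds for some $\omega_0\in\Sigma$.
\end{enumerate}
Item (d) is exactly the displayed estimate in the corollary.

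There is no real obstacle here. The only point needing care is that the exclusion property is required only for pairs of indices in $\Gamma$, and restricting from $\N$ to $\Gamma$ is harmless.
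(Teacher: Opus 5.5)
Your proposal is correct and follows the paper's proof essentially verbatim: restrict $(\mathsf E_{a,b})$ to $\Gamma=\{a(I):I\in\mathcal A\}$, note that \eqref{eq:abstract-exclusion-property} is exactly \eqref{eq:marked-event-exclusion}, and invoke Lemma \ref{lem:exclusion-packing}. The extra measurability check you include is harmless, and the paper records that $\mathsf E_{a,b}\in\mathcal F$ when it defines the events.
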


\begin{proof}
	Restrict the exclusion family $(\mathsf E_{a,b})_{a,b\geq0}$ to
	$\{a(I):I\in\mathcal A\}$. The exclusion property
	\eqref{eq:abstract-exclusion-property} is precisely
	\eqref{eq:marked-event-exclusion}. Lemma \ref{lem:exclusion-packing} applies and gives
	the asserted family, after denoting it by $\mathcal S_{\mathcal T}(\omega)$.
\end{proof}

Random marked integers have, in addition, a useful last-mark description. For $N\geq0$,
define
\[
	L_N^{\mathcal T}(\omega)
	:=\max\bigl(\mathcal T(\omega)\cap\{0,\ldots,N\}\bigr).
\]
The maximum exists because zero is always marked, and, for $0\leq a\leq N$,
\begin{equation}\label{eq:last-mark-event}
	\{\omega:L_N^{\mathcal T}(\omega)=a\}=\mathsf E_{a,N-a}.
\end{equation}
In particular, $L_N^{\mathcal T}$ is measurable. Moreover,
\[
	L_0^{\mathcal T}=0,
	\qquad
	L_N^{\mathcal T}(\omega)\in\{L_{N-1}^{\mathcal T}(\omega),N\}
	\quad(\omega\in\Sigma,\ N\geq1).
\]
The marked set is recovered from this process by
$\mathcal T(\omega)=\{N\geq0:L_N^{\mathcal T}(\omega)=N\}$.
Conversely, a sequence of measurable random variables
$L_N\colon\Sigma\to\{0,\ldots,N\}$ satisfying $L_0=0$ and
$L_N(\omega)\in\{L_{N-1}(\omega),N\}$ for every $\omega\in\Sigma$ and $N\geq1$
determines a random marked set by
$\mathcal T(\omega):=\{N\geq0:L_N(\omega)=N\}$. Induction on $N$ then gives
$L_N=L_N^{\mathcal T}$. Lemma \ref{lem:marked-weight-realization} will use this converse:
we construct a suitable process $(L_N)$ first and obtain the marked set from it.

The last-mark description also gives a fixed-total property that will be used separately
below. For every fixed $N\geq0$, \eqref{eq:last-mark-event} gives
\begin{equation}\label{eq:fixed-total-partition}
	\mathsf E_{a,N-a}=\{\omega:L_N^{\mathcal T}(\omega)=a\},
	\qquad 0\leq a\leq N,
\end{equation}
and these events partition $\Sigma$ because $L_N^{\mathcal T}$ takes exactly one value in
$\{0,\ldots,N\}$.
Taking $\mathbb P$-measure in this partition gives
\begin{equation}\label{eq:fixed-total-probability}
	\sum_{a=0}^N\pi(a,N-a)=1,
	\qquad N\geq0.
\end{equation}
This additional identity is the constraint characterized in the next subsection.

\subsection{Realization and completion of marked-event probabilities}
\label{sec:marked-weight-realization}

For a random marked set, put $w(a,b):=\mathbb P(\mathsf E_{a,b})$. Such a $w$ satisfies
$0\leq w\leq1$, belongs to $\mathcal W_2$, and, by \eqref{eq:fixed-total-probability}, has
total mass one on every line $a+b=N$. The next lemma shows that these necessary
conditions are also sufficient.

\begin{lem}[Exact realization of marked-event probabilities]
\label{lem:marked-weight-realization}
Let $w\colon\N^2\to[0,1]$. There exist a probability space
$(\Sigma,\mathcal F,\mathbb P)$ and a random marked set
$\mathcal T(\omega)\subset\N$ such that
\begin{equation}\label{eq:marked-weight-realization}
	\mathbb P(\mathsf E_{a,b})=w(a,b),
	\qquad a,b\geq0,
\end{equation}
if and only if $w\in\mathcal W_2$ and
\begin{equation}\label{eq:marked-weight-conditions}
	\sum_{a=0}^Nw(a,N-a)=1,
	\qquad N\geq0.
\end{equation}
\end{lem}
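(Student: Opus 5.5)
Necessity is already recorded just before the statement: if $w(a,b)=\mathbb P(\mathsf E_{a,b})$, then $w\in\mathcal W_2$ because $\mathsf E_{a,b+1}\subset\mathsf E_{a,b}$, and \eqref{eq:marked-weight-conditions} is \eqref{eq:fixed-total-probability}. So the work is sufficiency. By the converse to the last-mark description, it suffices to build measurable random variables $L_N\colon\Sigma\to\{0,\dots,N\}$ with $L_0=0$, $L_N\in\{L_{N-1},N\}$ and $\mathbb P(L_N=a)=w(a,N-a)$ for $0\le a\le N$. Then \eqref{eq:last-mark-event} gives $\mathbb P(\mathsf E_{a,N-a})=\mathbb P(L_N=a)=w(a,N-a)$, and every pair $(a,b)$ arises with $N=a+b$.

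I will construct $(L_N)$ as a Markov chain. Write $p_N(a):=w(a,N-a)$; by \eqref{eq:marked-weight-conditions} this is a probability distribution on $\{0,\dots,N\}$. The transition from time $N-1$ to time $N$ must send mass at $a\le N-1$ either to $a$ or to the new point $N$. The constraint is $p_N(a)=w(a,N-a)\le w(a,N-1-a)=p_{N-1}(a)$ for $a<N$, which is exactly the $\mathcal W_2$ monotonicity. From state $a$ with $p_{N-1}(a)>0$, I let the chain stay at $a$ with probability $p_N(a)/p_{N-1}(a)\in[0,1]$ and jump to $N$ otherwise. If $p_{N-1}(a)=0$, then $p_N(a)=0$ and the choice is irrelevant; say it jumps to $N$. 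Then $\mathbb P(L_N=a)=p_N(a)$ for $a<N$, and
\[
\mathbb P(L_N=N)=\sum_{a<N}\bigl(p_{N-1}(a)-p_N(a)\bigr)=1-\bigl(1-p_N(N)\bigr)=p_N(N),
\]
using \eqref{eq:marked-weight-conditions} at levels $N-1$ and $N$. The base case is $p_0(0)=w(0,0)=1$, so $L_0=0$ almost surely.

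To realize the chain concretely, take $\Sigma=[0,1]^{\N}$ with product Lebesgue measure and independent uniform coordinates $U_1,U_2,\dots$. Set $L_0=0$, and set $L_N=L_{N-1}$ if $U_N\,p_{N-1}(L_{N-1})<p_N(L_{N-1})$, and $L_N=N$ otherwise. Each $L_N$ is a Borel function of finitely many coordinates, so it is measurable, and $L_N\in\{L_{N-1},N\}$ holds pointwise, not just almost surely, as the converse requires. The computation above then runs by induction on $N$, conditioning on $L_{N-1}$ and using the independence of $U_N$.

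I do not expect a serious obstacle. The points needing care are checking that the jump mass lands exactly on $p_N(N)$, which uses both line-sum identities, and handling states of zero probability so that the pathwise conditions hold everywhere.
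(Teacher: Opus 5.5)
Your proposal is correct and follows the paper's proof essentially step for step. Both use the product space of independent uniforms $U_N$ to build the last-mark chain $L_N$, which stays at $L_{N-1}=a$ with probability $w(a,N-a)/w(a,N-a-1)$ and otherwise jumps to $N$, with zero-probability states forced to jump. Both then use the two line-sum identities to get $\mathbb P(L_N=N)=w(N,0)$ and recover $\mathcal T$ through the converse of the last-mark description. Your strict-inequality stay rule differs from the paper's $U_N\leq r_{N,L_{N-1}}$ only on a null set and keeps $L_N\in\{L_{N-1},N\}$ pathwise, as required.
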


\begin{proof}
Necessity follows from
$\mathsf E_{a,b+1}\subset\mathsf E_{a,b}$ and the fixed-total partition
\eqref{eq:fixed-total-partition}. For sufficiency, suppose that
$w\in\mathcal W_2$ and assume
\eqref{eq:marked-weight-conditions}. Let $\mathcal L$ and $\lambda$ denote the Lebesgue
$\sigma$-algebra and Lebesgue measure on $[0,1]$, respectively, and take
\[
	(\Sigma,\mathcal F,\mathbb P)
	:=\left(\prod_{N=1}^\infty[0,1],
		\bigotimes_{N=1}^\infty\mathcal L,
		\bigotimes_{N=1}^\infty\lambda\right).
\]
Thus every $\omega\in\Sigma$ is a sequence
$\omega=(\omega_1,\omega_2,\ldots)$ with $\omega_N\in[0,1]$. Define the coordinate
functions by
\[
	U_N(\omega):=\omega_N,
	\qquad N\geq1.
\]
By the product measure construction, $U_1,U_2,\ldots$ are independent and uniformly
distributed on $[0,1]$.

We now construct integer-valued random variables
$L_N\colon\Sigma\to\{0,\ldots,N\}$ and prove simultaneously by induction that $L_N$
depends only on $U_1,\ldots,U_N$ and
\begin{equation}\label{eq:last-mark-distribution}
	\mathbb P(L_N=a)=w(a,N-a),
	\qquad 0\leq a\leq N.
\end{equation}
Put $L_0(\omega):=0$ for every $\omega\in\Sigma$. The fixed-total identity at $N=0$
gives $w(0,0)=1$, proving the base case.

Fix $N\geq1$ and suppose that $L_0,\ldots,L_{N-1}$ have been constructed with these
properties. At stage $N$, conditional on $L_{N-1}=a<N$, the new variable $L_N$ will be
either $a$ or $N$. For $0\leq a<N$, set
\[
	r_{N,a}:=
	\begin{cases}
	\displaystyle\frac{w(a,N-a)}{w(a,N-a-1)},
		&w(a,N-a-1)>0,\\[6pt]
	0,&w(a,N-a-1)=0.
	\end{cases}
\]
Since $w\in\mathcal W_2$, we have $0\leq r_{N,a}\leq1$.

Since $L_{N-1}$ is measurable and takes only finitely many values,
\[
	A_N:=\{U_N\leq r_{N,L_{N-1}}\}
	=\bigcup_{a=0}^{N-1}
	  \bigl(\{L_{N-1}=a\}\cap\{U_N\leq r_{N,a}\}\bigr)
	\in\mathcal F.
\]
Define
\[
	L_N(\omega)
	:=L_{N-1}(\omega)1_{A_N}(\omega)
	  +N1_{\Sigma\setminus A_N}(\omega).
\]
Thus $L_N$ is measurable and depends only on $U_1,\ldots,U_N$. For $a<N$,
\[
	\{L_N=a\}=\{L_{N-1}=a\}\cap\{U_N\leq r_{N,a}\}.
\]
The first event depends only on $U_1,\ldots,U_{N-1}$ and is therefore independent of the
second. Hence the induction hypothesis and the definition of $r_{N,a}$ give
\[
	\mathbb P(L_N=a)
	=\mathbb P(L_{N-1}=a)r_{N,a}
	=w(a,N-a).
\]
If $w(a,N-a-1)=0$, then $w(a,N-a)=0$ by monotonicity, so the last equality remains
valid.
Since $L_N\in\{0,\ldots,N\}$, the remaining probability is
\[
	\mathbb P(L_N=N)
	=1-\sum_{a=0}^{N-1}w(a,N-a)
	=w(N,0),
\]
by the fixed-total identity at $N$. This completes the inductive construction of the random
variables $L_N$.

From these random variables define
\[
	\mathcal T(\omega):=\{N\geq0:L_N(\omega)=N\}.
\]
The construction gives $L_0=0$ and
$L_N(\omega)\in\{L_{N-1}(\omega),N\}$ for every $N\geq1$ and $\omega\in\Sigma$.
The converse part of the last-mark description following Corollary
\ref{lem:random-marked-packing} therefore applies: $\mathcal T$ is a random marked set and
$L_N=L_N^{\mathcal T}$. Consequently,
\eqref{eq:last-mark-event} and \eqref{eq:last-mark-distribution} give
\[
	\mathbb P(\mathsf E_{a,b})
	=\mathbb P(L_{a+b}=a)
	=w(a,b),
\]
which is \eqref{eq:marked-weight-realization}.
\end{proof}

The exact realization above requires unit mass on every fixed-total line. The following
purely algebraic lemma converts any nonzero
$w\in\mathcal W_{\mathrm{ft}}\cap\mathcal W_2$ into such a weight without decreasing
any of its normalized values.

\begin{lem}[Fixed-total completion]\label{lem:fixed-total-completion}
Let $w\in\mathcal W_{\mathrm{ft}}\cap\mathcal W_2$ be nonzero and put
$q:=\|w\|_{\mathrm{ft}}^{-1}w$. Define
\begin{equation}\label{eq:fixed-total-completion}
	\pi(a,b):=q(a,b)\quad(b\geq1),
	\qquad
	\pi(a,0):=1-\sum_{j=0}^{a-1}q(j,a-j),
\end{equation}
where the sum is empty when $a=0$. Then $\pi\colon\N^2\to[0,1]$ belongs to
$\mathcal W_2$, satisfies $\pi\geq q$, and has unit mass on every fixed-total line:
\begin{equation}\label{eq:completed-fixed-total-mass}
	\sum_{a=0}^N\pi(a,N-a)=1,
	\qquad N\geq0.
\end{equation}
\end{lem}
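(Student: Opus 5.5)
The proof is a direct verification of each claimed property from the definition \eqref{eq:fixed-total-completion}, using only $w\in\mathcal W_2$ and the normalization $\sum_{a=0}^N q(a,N-a)\leq 1$ for every $N$, which follows from $q=\|w\|_{\mathrm{ft}}^{-1}w$. The quantities $q$ are nonnegative and, since $w$ is nonzero, $\|w\|_{\mathrm{ft}}>0$, so $q$ is well defined.

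\emph{Unit mass.} Fix $N$ and split the line $a+b=N$ into the terms with $b\geq1$, namely $a=0,\dots,N-1$, and the single term $(N,0)$. The first group contributes $\sum_{j=0}^{N-1}q(j,N-j)$. By definition, $\pi(N,0)$ is exactly $1$ minus that sum, so the total is $1$. This gives \eqref{eq:completed-fixed-total-mass}.

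\emph{Range and domination.} For $b\geq1$ we have $\pi=q$, so $\pi\geq q$ holds trivially there. Moreover $0\leq q\leq1$, because each single term $q(a,b)$ is bounded by the full diagonal sum through $(a,b)$, which is at most $1$. For $b=0$, the identity
\[
	\pi(a,0)=1-\sum_{j=0}^{a-1}q(j,a-j)\geq \sum_{j=0}^{a}q(j,a-j)-\sum_{j=0}^{a-1}q(j,a-j)=q(a,0)\geq0
\]
uses $\sum_{j=0}^{a}q(j,a-j)\leq1$. Also $\pi(a,0)\leq1$, since the subtracted sum is nonnegative. Hence $\pi$ maps into $[0,1]$ and $\pi\geq q$ everywhere.

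\emph{Monotonicity in $b$.} For $b\geq1$, the inequality $\pi(a,b+1)=q(a,b+1)\leq q(a,b)=\pi(a,b)$ is inherited from $w\in\mathcal W_2$. The only new case, and the one point needing care, is $b=0$. There we need $\pi(a,1)=q(a,1)\leq\pi(a,0)$, and this follows from $q(a,1)\leq q(a,0)\leq\pi(a,0)$, using $\mathcal W_2$ and then the domination just proved. This completes $\pi\in\mathcal W_2$. The whole argument is elementary; the only subtle point is to establish domination at $b=0$ before monotonicity, since the $b=0$ case of monotonicity relies on it.
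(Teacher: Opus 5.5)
Your proposal is correct and follows essentially the same route as the paper: the paper also derives $\pi(a,0)\geq q(a,0)$ from the fixed-total bound, uses the chain $\pi(a,0)\geq q(a,0)\geq q(a,1)=\pi(a,1)$ for monotonicity at $b=0$, inherits monotonicity for $b\geq1$ from $q$, and checks unit mass by telescoping. Your explicit remark that $\pi(a,0)\leq1$ because the subtracted sum is nonnegative fills in a step the paper leaves implicit.
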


\begin{proof}
The definition of the fixed-total norm gives
\[
	q(a,0)+\sum_{j=0}^{a-1}q(j,a-j)\leq1
	\quad\Longleftrightarrow\quad
	\pi(a,0)=1-\sum_{j=0}^{a-1}q(j,a-j)\geq q(a,0).
\]
Also $0\leq q(a,b)\leq1$, because $q(a,b)$ is one term in the fixed-total sum with
$N=a+b$. As $w\in\mathcal W_2$, also $q\in\mathcal W_2$, and hence
\[
	\pi(a,0)\geq q(a,0)\geq q(a,1)=\pi(a,1),
	\quad
	\pi(a,b)=q(a,b)\geq q(a,b+1)=\pi(a,b+1)\quad(b\geq1).
\]
Thus $\pi\colon\N^2\to[0,1]$, $\pi\geq q$, and $\pi\in\mathcal W_2$. Finally,
\eqref{eq:fixed-total-completion} gives
\[
	\sum_{a=0}^N\pi(a,N-a)
	=\sum_{a=0}^{N-1}q(a,N-a)
	 +1-\sum_{a=0}^{N-1}q(a,N-a)
	=1,
\]
which is \eqref{eq:completed-fixed-total-mass}.
\end{proof}

\subsection{The fixed-total packing criterion}
\label{sec:fixed-total-criterion}

Combining Lemmas \ref{lem:marked-weight-realization} and
\ref{lem:fixed-total-completion} with Corollary \ref{lem:random-marked-packing} gives the
following sharp criterion.

\begin{lem}[The fixed-total packing criterion]
\label{lem:fixed-total-packing}
Let $X$ be a measurable subset of a Euclidean space with $0<|X|<\infty$. Let
$(P_I)_{I\in\mathcal A}$, with associated integers $a(I),b(I)\in\N$, satisfy the
geometric hypotheses of Lemma \ref{lem:exclusion-packing}. Let
$w\in\mathcal W_{\mathrm{ft}}\cap\mathcal W_2$ be nonzero. Then there exist a
probability space
$(\Sigma,\mathcal F,\mathbb P)$ and a random subfamily
$\mathcal S_w(\omega)\subset\mathcal A$ such that
\begin{equation}\label{eq:fixed-total-marginals}
	\mathbb P\bigl(\{\omega:I\in\mathcal S_w(\omega)\}\bigr)
	\geq\frac{w(a(I),b(I))}{\|w\|_{\mathrm{ft}}},
	\qquad I\in\mathcal A,
\end{equation}
and the indexed family $\{P_I:I\in\mathcal S_w(\omega)\}$ is $1/2$-sparse for every
$\omega$. Moreover, for some $\omega_0\in\Sigma$,
\begin{equation}\label{eq:fixed-total-sparse-packing}
	\sum_{I\in\mathcal A}w(a(I),b(I))|P_I|
	\leq\|w\|_{\mathrm{ft}}
	\sum_{I\in\mathcal S_w(\omega_0)}|P_I|
	\leq2\|w\|_{\mathrm{ft}}|X|.
\end{equation}
In particular, the universal estimate
\begin{equation}\label{eq:fixed-total-packing}
	\sum_{I\in\mathcal A}w(a(I),b(I))|P_I|
	\leq C|X|
\end{equation}
holds with $C=2\|w\|_{\mathrm{ft}}$. If $C_w^{\mathrm{loc}}$ denotes the best constant
$C$ in \eqref{eq:fixed-total-packing}, then
\[
	\|w\|_{\mathrm{ft}}\leq C_w^{\mathrm{loc}}\leq2\|w\|_{\mathrm{ft}}.
\]
\end{lem}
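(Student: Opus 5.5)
The upper bound is obtained by chaining the three preceding results. Given nonzero $w\in\mathcal W_{\mathrm{ft}}\cap\mathcal W_2$, we first apply Lemma \ref{lem:fixed-total-completion} to $q:=\|w\|_{\mathrm{ft}}^{-1}w$. This produces $\pi\colon\N^2\to[0,1]$ with $\pi\in\mathcal W_2$, $\pi\geq q$, and unit mass on every fixed-total line. These are exactly the hypotheses of Lemma \ref{lem:marked-weight-realization}, so we obtain a probability space and a random marked set $\mathcal T$ with $\mathbb P(\mathsf E_{a,b})=\pi(a,b)$ for all $a,b\geq0$.

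Corollary \ref{lem:random-marked-packing} then applies to the given geometric data $(P_I)$, $a(I),b(I)\in\N$. It yields random $1/2$-sparse subfamilies $\mathcal S_w(\omega):=\mathcal S_{\mathcal T}(\omega)$ with marginals
\[
\mathbb P(I\in\mathcal S_w)=\pi(a(I),b(I))\geq\frac{w(a(I),b(I))}{\|w\|_{\mathrm{ft}}},
\]
which is \eqref{eq:fixed-total-marginals}. The same corollary supplies $\omega_0$ with $\sum_I\pi(a(I),b(I))|P_I|\leq\sum_{I\in\mathcal S_w(\omega_0)}|P_I|\leq2|X|$. Multiplying by $\|w\|_{\mathrm{ft}}$ and using $w\leq\|w\|_{\mathrm{ft}}\pi$ gives \eqref{eq:fixed-total-sparse-packing}, hence \eqref{eq:fixed-total-packing} with $C=2\|w\|_{\mathrm{ft}}$, and therefore $C_w^{\mathrm{loc}}\leq2\|w\|_{\mathrm{ft}}$.

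For the lower bound $\|w\|_{\mathrm{ft}}\leq C_w^{\mathrm{loc}}$, which is the only step needing an independent idea, we fix $N$ and exhibit a single admissible configuration. The plan is to take $\mathcal A=\{I_0,\dots,I_N\}$ with $P_{I_a}:=X$ for each $a$, and set $a(I_a):=a$, $b(I_a):=N-a$.
\begin{itemize}
\item Hypothesis (i) holds trivially, since each layer $a$ contains a single set.
\item For hypothesis (ii), any $K$ with $a(K)>a+(N-a)=N$ would be needed, and no such index exists. The union in \eqref{eq:exclusion-overlap-bound} is therefore empty.
\end{itemize}
Thus \eqref{eq:fixed-total-packing} reads $\sum_{a=0}^Nw(a,N-a)|X|\leq C|X|$. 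Since $0<|X|<\infty$, we may divide by $|X|$ and take the supremum over $N$, which gives $\|w\|_{\mathrm{ft}}\leq C_w^{\mathrm{loc}}$.

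The only point to double-check is that the lemma permits repeated sets $P_I$ across different indices. This is fine because $\mathcal A$ is an abstract index set and disjointness is required only within a layer. If one preferred distinct sets, a harmless alternative is to use a fixed $X$ and take the $P_I$ to be the same set with distinct labels, as above.
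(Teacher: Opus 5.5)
Your proposal is correct and follows the paper's proof essentially step for step. For the upper bound you complete $q=\|w\|_{\mathrm{ft}}^{-1}w$ via Lemma~\ref{lem:fixed-total-completion}, realize it exactly by Lemma~\ref{lem:marked-weight-realization}, and select through Corollary~\ref{lem:random-marked-packing}; for the lower bound you use the same configuration of $N+1$ distinct indices with $P_{I_a}=X$, $a(I_a)=a$ and $b(I_a)=N-a$. Your closing remark about ``distinct sets'' is unnecessary, since only the indices need to be distinct, but it does not affect the argument.
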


\begin{proof}
Let $q:=\|w\|_{\mathrm{ft}}^{-1}w$ and let $\pi$ be its completion from Lemma
\ref{lem:fixed-total-completion}.
Lemma \ref{lem:marked-weight-realization} therefore realizes $\pi$ as the probabilities of
the events associated with a random marked set. Applying Corollary
\ref{lem:random-marked-packing} gives a random $1/2$-sparse indexed family
$\mathcal S_w(\omega)$ with inclusion probabilities
\[
	\pi(a(I),b(I))\geq q(a(I),b(I))
	=\frac{w(a(I),b(I))}{\|w\|_{\mathrm{ft}}}.
\]
This proves \eqref{eq:fixed-total-marginals}. The selected-outcome conclusion of the
corollary also gives some $\omega_0$ such that
\[
	\frac1{\|w\|_{\mathrm{ft}}}
	\sum_{I\in\mathcal A}w(a(I),b(I))|P_I|
	\leq\sum_{I\in\mathcal A}\pi(a(I),b(I))|P_I|
	\leq\sum_{I\in\mathcal S_w(\omega_0)}|P_I|
	\leq2|X|,
\]
which proves \eqref{eq:fixed-total-sparse-packing}. In particular,
\eqref{eq:fixed-total-packing} holds with $C=2\|w\|_{\mathrm{ft}}$.

For the lower bound on $C_w^{\mathrm{loc}}$, fix $N\geq0$, take distinct abstract
indices $I_0,\ldots,I_N$, put $\mathcal A:=\{I_0,\ldots,I_N\}$, and set
\[
	P_{I_a}:=X,
	\qquad a(I_a):=a,
	\qquad b(I_a):=N-a.
\]
For $0\leq m\leq N$,
\[
	\{I\in\mathcal A:a(I)=m\}=\{I_m\},
\]
and, for any $0\leq a,c\leq N$,
\[
	a(I_c)=c\leq N=a(I_a)+b(I_a).
\]
Thus hypothesis (i) of Lemma \ref{lem:exclusion-packing} holds, and hypothesis (ii)
also holds because the union appearing there is empty. The assumed universal estimate
therefore gives
\[
	\sum_{a=0}^N w(a,N-a)|X|\leq C_w^{\mathrm{loc}}|X|.
\]
Taking the supremum over $N$ proves
$\|w\|_{\mathrm{ft}}\leq C_w^{\mathrm{loc}}$.
\end{proof}

This criterion is the complete two-depth probabilistic interface used below. Once a new
admissible weight $w$ has been proposed, its application to the covering theorem requires
only an estimate of $\|w\|_{\mathrm{ft}}$.

\subsection{From an overlap estimate to a sparse family}
\label{sec:overlap-to-sparse}

For $0<\eta\leq1$, a measurable set $A\subset I$ is an $\eta$-\emph{major subset} of $I$
if $|A|\geq\eta|I|$.
In the applications, Lemma \ref{lem:fixed-total-packing} supplies the local
weighted-overlap bound for later near successors, while embeddedness produces
major subsets that avoid far predecessors. The next lemma combines these two facts and
produces random sparse subfamilies of the original family, with a quantitative inclusion
probability for every original set. At the abstract level, near and far are only labels for the
two classes. The order of the entries is part of the label: for fixed sets with $J\prec I$,
the label is attached to $(J,I)$, with $J$ the predecessor and $I$ the later set, rather
than to $(I,J)$. In the rectangle applications the labels
describe separation in scale, not in space: a pair is far when its relevant generation gap
exceeds the embeddedness depth of the later rectangle. The proof first includes each set
independently in a preliminary random family and then discards a selected set if its selected
later near successors cover too much of it. This select-and-delete mechanism is a weighted,
local form of the alteration method from probabilistic combinatorics; see, e.g.,
\cite{AlonSpencer2016}*{Chapter 3}. The local overlap hypothesis controls the deletion
probability separately for each set and therefore yields the individual inclusion probabilities
below.

\begin{lem}[A local overlap bound produces random sparse families]
	\label{lem:local-overlap-to-marginals}
	Let $\mathcal U$ be a countable family of measurable subsets of a Euclidean
	space, each of finite positive measure, equipped with a strict total order $\prec$.
	Suppose that every ordered pair $(J,I)$ with
	$J\prec I$ and $J\cap I\neq\varnothing$ has been assigned exactly one of the labels near
	or far. Let $0\leq w(I)\leq1$ for $I\in\mathcal U$. Suppose that the following hold
	for some $0<\eta\leq1$ and $C\geq0$.
	\begin{enumerate}[(i)]
		\item For every $I\in\mathcal U$, there is a measurable set $A_I\subset I$ such that
		$|A_I|\geq\eta|I|$ and $A_I\cap J=\varnothing$ for every far predecessor
		$J$ of $I$.
		\item For every $J\in\mathcal U$, the later near successors of $J$ satisfy
		\begin{equation}\label{eq:abstract-local-near-overlap}
			\sum_{\substack{I:\,J\prec I\\(J,I)\text{ near}}}
			w(I)|J\cap I|\leq C|J|.
		\end{equation}
	\end{enumerate}
	Then one can find a probability space $(\Theta,\mathcal A,\mathbb P)$ and, for every
	$\theta\in\Theta$, an $\eta/2$-sparse subfamily
	$\mathcal G(\theta)\subset\mathcal U$ such that
	\begin{equation}\label{eq:abstract-sparse-marginals}
		\mathbb P\bigl(\{\theta\in\Theta:J\in\mathcal G(\theta)\}\bigr)
		\geq\frac{\eta}{8(C+1)}w(J)
		\qquad(J\in\mathcal U).
	\end{equation}
\end{lem}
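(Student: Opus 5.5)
Let $\Theta$ carry independent random variables $\xi_I$, $I\in\mathcal U$, with $\mathbb P(\xi_I=1)=p_I:=w(I)/(2(C+1))$ and $\xi_I=0$ otherwise. This is a countable product of two-point spaces, so it is a legitimate probability space. Put $\mathcal R(\theta):=\{I:\xi_I=1\}$, the preliminary selection.

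The alteration step deletes $J\in\mathcal R$ when its selected later near successors cover too much of it. Define the random variable
\[
	Z_J:=\sum_{\substack{I:\,J\prec I\\(J,I)\text{ near}}}\xi_I\,|J\cap I|,
\]
and set $\mathcal G(\theta):=\{J\in\mathcal R(\theta):Z_J\leq\tfrac12|J|\}$. The variable $Z_J$ depends only on the $\xi_I$ with $I\neq J$, so it is independent of $\xi_J$. Hypothesis (ii) gives $\mathbb E Z_J\leq|J|/(2(C+1))\cdot C\leq|J|/2\cdot\frac{C}{C+1}$. Markov's inequality then yields
\[
	\mathbb P(Z_J>\tfrac12|J|)\leq\frac{C}{C+1},
\]
and hence $\mathbb P(J\in\mathcal G)\geq p_J/(C+1)$. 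This is of order $w(J)/(C+1)^2$, which is too weak. I would instead use $\mathbb P(Z_J>\tfrac14|J|)\leq 4\,\mathbb E Z_J/|J|$ with a smaller $p_I$: taking $p_I:=w(I)/(4(C+1))$ gives $\mathbb E Z_J\leq C|J|/(4(C+1))<|J|/4$. Markov then gives deletion probability at most $C/(C+1)\cdot$(constant). To get a bound uniformly below $1$, I take the threshold $\eta|J|/2$-type and $p_I:=\eta w(I)/(8(C+1))\cdot 2$. Concretely, with $p_I=w(I)/(4(C+1))$ and threshold $\tfrac12|J|$, we get $\mathbb E Z_J\leq\tfrac14|J|$, so the deletion probability is at most $\tfrac12$ and
\[
	\mathbb P(J\in\mathcal G)\geq\tfrac12p_J=\frac{w(J)}{8(C+1)}\geq\frac{\eta\,w(J)}{8(C+1)}.
\]
The precise constants will be tuned to match \eqref{eq:abstract-sparse-marginals}. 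Independence and the countable product structure make all events measurable.

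Sparseness is deterministic for each fixed $\theta$. Since $\mathcal G\subset\mathcal R$, every retained $J$ has its selected later near successors covering at most $\tfrac12|J|$; under the tuned threshold I intend $\tfrac{\eta}{2}|J|$, with the probability constants adjusted accordingly. Define
\[
	E_J:=A_J\setminus\bigcup_{\substack{I\in\mathcal G:\,J\prec I\\(J,I)\text{ near}}}I .
\]
There is also no far successor issue to handle: if $I\in\mathcal G$ with $J\prec I$ and $(J,I)$ far, then $A_I\cap J=\varnothing$, so removing later sets from the earlier $J$ is only needed for near pairs. To make the sets disjoint, I remove from $A_J$ all later selected sets that meet it. Far later sets $I$ are disjoint from $A_I$ but not necessarily from $A_J$, so I reverse the roles and assign to each $J$ the set
\[
	E_J:=A_J\setminus\bigcup_{\substack{I\in\mathcal G:\,J\prec I\\(J,I)\text{ near}}}I,
\]
and then check disjointness for $J\prec I$, both in $\mathcal G$. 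If the pair is near, $E_J$ avoids $I\supset E_I$. If it is far, $E_I\subset A_I$ avoids $J\supset E_J$. If $J\cap I=\varnothing$, the sets are trivially disjoint. Finally, $|E_J|\geq|A_J|-Z_J\geq\eta|J|-Z_J$. With threshold $Z_J\leq\tfrac\eta2|J|$ this gives $|E_J|\geq\tfrac\eta2|J|$, so $\mathcal G(\theta)$ is $\eta/2$-sparse.

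The main obstacle is the bookkeeping of constants once the threshold is $\tfrac\eta2|J|$. We need $\mathbb E Z_J\leq\tfrac\eta4|J|$, which forces $p_I=\eta w(I)/(4(C+1))$. This is at most $1$ since $w\leq1$ and $\eta\leq1$. Markov then bounds the deletion probability by $\tfrac12$, yielding
\[
	\mathbb P(J\in\mathcal G)\geq\frac{\eta\,w(J)}{8(C+1)},
\]
exactly \eqref{eq:abstract-sparse-marginals}. The conceptual point to verify carefully is the independence of $\xi_J$ and $Z_J$. It holds because the sum defining $Z_J$ ranges only over $I\succ J$, so $I\neq J$.
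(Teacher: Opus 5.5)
Your final argument is correct and is essentially the paper's proof: you include each set independently with probability $\eta w(I)/(4(C+1))$, delete $J$ when its selected later near successors exceed $\frac\eta2|J|$ (a Markov bound of at most $\frac12$, independent of $\xi_J$), and set $E_J:=A_J$ minus the retained later near successors, with far pairs handled by hypothesis (i). The only difference is that you define $Z_J$ as the sum $\sum\xi_I|J\cap I|$ rather than the measure of $J\cap\bigcup I$; the sum dominates the union measure and is measurable as a countable sum, so the paper's finite-approximation step is not needed. In a write-up, drop the exploratory attempts with threshold $\frac12|J|$, which do not give $\eta/2$-sparseness for small $\eta$.
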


\begin{proof}
	Set
	\[
		\varepsilon:=\frac{\eta}{4(C+1)}.
	\]
	Take $\Theta:=\{0,1\}^{\mathcal U}$, equip it with the product $\sigma$-algebra
	$\mathcal A$, and let $\mathbb P$ be the product probability measure for which the
	coordinate functions
	$\xi_I(\theta):=\theta(I)$ are independent and
	\[
		\mathbb P(\xi_I=1)=\varepsilon w(I).
	\]
	These are valid probabilities because $0\leq\varepsilon w(I)\leq1$. Write $\mathbb E$ for
	expectation with respect to $\mathbb P$. For $\theta\in\Theta$, define
	\[
		\mathcal R(\theta):=\{I\in\mathcal U:\xi_I(\theta)=1\}.
	\]

	For each $J\in\mathcal U$, define
	\begin{equation}\label{eq:abstract-later-near-coverage}
		Z_J(\theta):=\Bigg|J\cap
		\bigcup_{\substack{I\in\mathcal R(\theta):\,J\prec I\\(J,I)\text{ near}}}I\Bigg|
		=\Bigg|J\cap
		\bigcup_{\substack{I:\,J\prec I,\ (J,I)\text{ near}\\\xi_I(\theta)=1}}I\Bigg|.
	\end{equation}
	Thus $Z_J$ measures the part of $J$ covered by those later near successors that belong
	to $\mathcal R(\theta)$. Choose increasing finite subfamilies $\mathcal V_{J,N}$ whose
	union is the family of all later near successors of $J$, and put
	\[
		Z_{J,N}(\theta):=\Bigg|J\cap
		\bigcup_{\substack{I\in\mathcal V_{J,N}\\\xi_I(\theta)=1}}I\Bigg|.
	\]
	Each $Z_{J,N}$ is measurable and $Z_{J,N}\uparrow Z_J$. Moreover, subadditivity and
	finite linearity of expectation give
	\[
		\mathbb E Z_{J,N}
		\leq\varepsilon\sum_{I\in\mathcal V_{J,N}}w(I)|J\cap I|.
	\]
	The Monotone Convergence Theorem and \eqref{eq:abstract-local-near-overlap} therefore give
	\[
		\mathbb E Z_J
		=\lim_{N\to\infty}\mathbb E Z_{J,N}
		\leq\varepsilon
		\sum_{\substack{I:\,J\prec I\\(J,I)\text{ near}}}
		w(I)|J\cap I|
		\leq\varepsilon C|J|.
	\]
	Define
	\[
		B_J:=\left\{\theta\in\Theta:Z_J(\theta)>\frac\eta2|J|\right\}.
	\]
	This event is measurable because $Z_J$ is measurable.
	By the definition of $B_J$ and the preceding expectation estimate,
	\[
		\frac\eta2|J|\,\mathbb P(B_J)
		\leq\int_{B_J}Z_J(\theta)\,\ud\mathbb P(\theta)
		\leq\mathbb E Z_J
		\leq\varepsilon C|J|.
	\]
	Consequently,
	\[
		\mathbb P(B_J)\leq\frac{2\varepsilon C}{\eta}
		=\frac{C}{2(C+1)}\leq\frac12.
	\]

	The event $B_J$ is determined entirely by the variables $\xi_I$ corresponding to later
	near successors $I$ of $J$. In particular, it does not depend on $\xi_J$. The
	independence of the coordinate variables therefore shows that $B_J$ and
	$\{\xi_J=1\}$ are independent. Define
	\[
		\mathcal G(\theta):=
		\{J\in\mathcal R(\theta):\theta\notin B_J\}.
	\]
	For any fixed $J\in\mathcal U$, independence gives
	\[
		\begin{aligned}
			\mathbb P\bigl(\{\theta\in\Theta:J\in\mathcal G(\theta)\}\bigr)
			&=\mathbb P\bigl(\{\xi_J=1\}\cap(\Theta\setminus B_J)\bigr)\\
			&=\mathbb P(\xi_J=1)\mathbb P(\Theta\setminus B_J)
			\geq\frac\varepsilon2w(J)
			=\frac{\eta}{8(C+1)}w(J),
		\end{aligned}
	\]
	which proves \eqref{eq:abstract-sparse-marginals}.

	It remains to verify sparseness. Fix $\theta\in\Theta$. By the definition of
	$\mathcal G(\theta)$, we have $\theta\notin B_J$ for every
	$J\in\mathcal G(\theta)$. Recalling the definition of $B_J$, this means that
	\[
		Z_J(\theta)\leq\frac\eta2|J|
		\qquad(J\in\mathcal G(\theta)).
	\]
	For each $J\in\mathcal G(\theta)$, put
	\[
		E_J:=A_J\setminus
		\bigcup_{\substack{I\in\mathcal G(\theta):\,J\prec I\\
		(J,I)\text{ near}}}I.
	\]
	This set is measurable because $\mathcal U$ is countable.
	Since $A_J\subset J$ and $\mathcal G(\theta)\subset\mathcal R(\theta)$,
	\eqref{eq:abstract-later-near-coverage} gives
	\[
		\Bigg|A_J\cap
		\bigcup_{\substack{I\in\mathcal G(\theta):\,J\prec I\\
		(J,I)\text{ near}}}I\Bigg|
		\leq\Bigg|J\cap
		\bigcup_{\substack{I\in\mathcal R(\theta):\,J\prec I\\
		(J,I)\text{ near}}}I\Bigg|
		=Z_J(\theta)\leq\frac\eta2|J|.
	\]
	Using also $|A_J|\geq\eta|J|$, we conclude that
	\[
		|E_J|\geq|A_J|-\frac\eta2|J|\geq\frac\eta2|J|.
	\]
	Let $I,J$ be distinct members of $\mathcal G(\theta)$. Since $\prec$ is a strict total
	order, after interchanging their names if necessary, we may assume $J\prec I$. If they
	are disjoint, then so are
	$E_J$ and $E_I$. If $(J,I)$ is near, the definition of $E_J$ removes the later set $I$,
	so $E_J\cap E_I=\varnothing$. If $(J,I)$ is far, hypothesis (i) gives
	$A_I\cap J=\varnothing$; since $E_I\subset A_I$ and $E_J\subset J$, we again have
	$E_J\cap E_I=\varnothing$. Thus the sets $E_J$, $J\in\mathcal G(\theta)$, are pairwise
	disjoint. We have proved that $\mathcal G(\theta)$ is $\eta/2$-sparse.
\end{proof}

\begin{rem}[The weaker summed-overlap hypothesis]
	Suppose that $\mathcal U$ is countable and that we retain hypothesis (i) of Lemma
	\ref{lem:local-overlap-to-marginals}, but weaken the local assumption
	\eqref{eq:abstract-local-near-overlap} to the summed estimate
	\begin{equation}\label{eq:abstract-summed-near-overlap}
		\sum_{\substack{J\prec I\\(J,I)\text{ near}}}
		w(J)w(I)|J\cap I|
		\leq C\sum_{I\in\mathcal U}w(I)|I|<\infty.
	\end{equation}
	One can still conclude that there is an $\eta/2$-sparse subfamily
	$\mathcal G\subset\mathcal U$ such that
	\[
		\sum_{I\in\mathcal U}w(I)|I|
		\lesssim_{\eta,C}\sum_{I\in\mathcal G}|I|.
	\]
	The proof is an easier version of that of Lemma
	\ref{lem:local-overlap-to-marginals}: independently include $I$ with probability
	$\varepsilon w(I)$, discard the selected sets whose selected near predecessors cover
	too much of them, and then remove the remaining near predecessors from the major
	subsets. We omit the details and do not invoke this weaker principle below. It would
	suffice for the weighted $L^1$ packing conclusions in both settings. The exponential
	integrability conclusion in the bi-parameter setting genuinely requires the stronger
	local hypothesis, which gives the individual inclusion probabilities in
	\eqref{eq:abstract-sparse-marginals}. We also call these marginal probabilities.
\end{rem}

\begin{rem}[The weighted $L^1$ consequence of the marginal estimate]
	\label{rem:marginals-to-scalar}
	Suppose that the hypotheses of Lemma \ref{lem:local-overlap-to-marginals} hold and that
	$|\operatorname{sh}(\mathcal U)|<\infty$. Put
	\[
		\kappa:=\frac{\eta}{8(C+1)},
		\qquad
		S:=\sum_{I\in\mathcal U}w(I)|I|,
		\qquad
		M(\theta):=\sum_{I\in\mathcal G(\theta)}|I|.
	\]
	Since $\mathcal G(\theta)$ is $\eta/2$-sparse, its disjoint major subsets are contained
	in $\operatorname{sh}(\mathcal U)$, and hence
	\[
		M(\theta)\leq\frac2\eta|\operatorname{sh}(\mathcal U)|,
		\qquad \theta\in\Theta.
	\]
	On the other hand, \eqref{eq:abstract-sparse-marginals} and the non-negativity of the
	terms give
	\[
		\mathbb E M
		=\sum_{I\in\mathcal U}
		\mathbb P\bigl(\{\theta:I\in\mathcal G(\theta)\}\bigr)|I|
		\geq\kappa S.
	\]
	Thus $S<\infty$. If $S>0$, choose $\theta_0\in\Theta$ such that
	$M(\theta_0)\geq\kappa S/2$. Such an outcome exists because otherwise
	$\mathbb E M\leq\kappa S/2$, contrary to the preceding estimate. If $S=0$, choose any
	$\theta_0$. Then
	$\mathcal G:=\mathcal G(\theta_0)$ is $\eta/2$-sparse and
	\[
		S\leq\frac2\kappa\sum_{I\in\mathcal G}|I|
		\leq\frac4{\kappa\eta}|\operatorname{sh}(\mathcal G)|.
	\]
	This is the weighted $L^1$ sparse conclusion in the preceding remark. Thus, compared with the
	summed hypothesis \eqref{eq:abstract-summed-near-overlap}, the stronger local hypothesis
	\eqref{eq:abstract-local-near-overlap} is needed not for that conclusion, but for the
	individual inclusion probabilities in \eqref{eq:abstract-sparse-marginals} and their
	exponential consequences via Subsection \ref{sec:convex-transfer}.
\end{rem}

\subsection{Convex transfer from inclusion probabilities}
\label{sec:convex-transfer}

The following elementary observation is independent of how the random subfamilies are
constructed. It uses only their inclusion probabilities and transfers a uniform convex
overlap estimate for the selected families to the original weighted family.

\begin{lem}[From inclusion probabilities to convex overlap estimates]
	\label{lem:inclusion-probabilities-to-convex-overlap}
	Let $(X,\mu)$ be a $\sigma$-finite measure space, let $\mathcal U$ be a countable family of measurable
	subsets of $X$, and let $w(I)\geq0$ for $I\in\mathcal U$. Let
	$(\Theta,\mathcal A,\mathbb P)$ be an arbitrary probability space. Suppose that one can
	associate to each $\theta\in\Theta$ a subfamily $\mathcal G(\theta)\subset\mathcal U$ so
	that each event
	\[
		\{\theta\in\Theta:I\in\mathcal G(\theta)\}
	\]
	is measurable. Define
	\[
		h_\theta(x):=\sum_{I\in\mathcal G(\theta)}1_I(x),
		\qquad
		H_w(x):=\sum_{I\in\mathcal U}w(I)1_I(x).
	\]
	Assume that, for some $\kappa>0$,
	\begin{equation}\label{eq:abstract-inclusion-probabilities}
		\mathbb P\bigl(\{\theta\in\Theta:I\in\mathcal G(\theta)\}\bigr)
		\geq\kappa w(I),
		\qquad I\in\mathcal U.
	\end{equation}
	Let $\Phi\colon[0,\infty)\to[0,\infty)$ be increasing and convex. Then
	\begin{equation}\label{eq:abstract-convex-transfer}
		\int_X\Phi\bigl(\kappa H_w(x)\bigr)\,\ud\mu(x)
		\leq
		\int_\Theta\int_X\Phi\bigl(h_\theta(x)\bigr)
		\,\ud\mu(x)\,\ud\mathbb P(\theta).
	\end{equation}
	Moreover, suppose that
	\[
		\int_\Theta\int_X\Phi\bigl(h_\theta(x)\bigr)
		\,\ud\mu(x)\,\ud\mathbb P(\theta)<\infty.
	\]
	Then there is $\theta_0\in\Theta$ such that
	\begin{equation*}
		\int_X\Phi\bigl(\kappa H_w(x)\bigr)\,\ud\mu(x)
		\leq\int_X\Phi\bigl(h_{\theta_0}(x)\bigr)\,\ud\mu(x).
	\end{equation*}
\end{lem}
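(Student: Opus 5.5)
The plan is to apply Jensen's inequality pointwise in $x$, with respect to the probability measure $\mathbb P$, and then integrate in $x$ using Tonelli.

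Fix $x\in X$. Linearity of expectation for nonnegative series (monotone convergence, since $\mathcal U$ is countable and each indicator $\theta\mapsto 1_{\{I\in\mathcal G(\theta)\}}$ is measurable) gives
\[
	\int_\Theta h_\theta(x)\,\ud\mathbb P(\theta)
	=\sum_{I\in\mathcal U}\mathbb P\bigl(\{\theta:I\in\mathcal G(\theta)\}\bigr)1_I(x)
	\geq\kappa\sum_{I\in\mathcal U}w(I)1_I(x)=\kappa H_w(x),
\]
using \eqref{eq:abstract-inclusion-probabilities}. Since $\Phi$ is increasing, $\Phi(\kappa H_w(x))\leq\Phi(\mathbb E h_\cdot(x))$, and Jensen's inequality for the convex $\Phi$ then gives $\Phi(\mathbb E h_\cdot(x))\leq\mathbb E\,\Phi(h_\cdot(x))$.

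Two technical points need care. The first is that $h_\theta(x)$ or its mean may be $+\infty$. Extend $\Phi$ to $[0,\infty]$ by $\Phi(\infty):=\lim_{t\to\infty}\Phi(t)$; this limit is $+\infty$ unless $\Phi$ is constant, since $\Phi$ is increasing and convex. Jensen remains valid for nonnegative convex increasing functions on $[0,\infty]$: one can write $\Phi$ as a supremum of countably many affine minorants with nonnegative slopes and apply linearity to each. Alternatively, first truncate $\mathcal U$ to finite subfamilies $\mathcal U_N$ and pass to the limit by monotone convergence.

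The second point is joint measurability of $(x,\theta)\mapsto h_\theta(x)$, which is needed for Tonelli. It holds because $h_\theta(x)$ is a countable sum of the products $1_I(x)1_{\{I\in\mathcal G(\theta)\}}$. Integrating the pointwise inequality over $X$ with respect to $\mu$ and swapping the order by Tonelli (valid since $\mu$ is $\sigma$-finite and everything is nonnegative) yields \eqref{eq:abstract-convex-transfer}.

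For the final claim, set $F(\theta):=\int_X\Phi(h_\theta)\,\ud\mu$. This function is measurable by Tonelli, and $\int_\Theta F\,\ud\mathbb P<\infty$ by assumption. Since $\mathbb P(F\leq\mathbb E F)>0$ for any integrable $F$, some $\theta_0$ satisfies $F(\theta_0)\leq\mathbb E F$. Combining this with \eqref{eq:abstract-convex-transfer} gives the claim.

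The main obstacle is only the bookkeeping around infinite values in Jensen's inequality and the measurability needed for Tonelli; the rest is formal.
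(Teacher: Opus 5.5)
Your argument for \eqref{eq:abstract-convex-transfer} is sound and follows the paper's route: Jensen in $\theta$ for each fixed $x$, then Tonelli. The paper handles possibly infinite sums by truncating to the finite sums $H_N$, $h_{\theta,N}$ and passing to the limit. Your extension of $\Phi$ to $[0,\infty]$ is an acceptable alternative, and it treats points with $h_\theta(x)=\infty$ more explicitly than the paper does.

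The final selection step is backwards. You have $\int_X\Phi(\kappa H_w)\,\ud\mu\leq\mathbb E F$, and you choose $\theta_0$ with $F(\theta_0)\leq\mathbb E F$. Two upper bounds by the same number say nothing about how $\int_X\Phi(\kappa H_w)\,\ud\mu$ compares with $F(\theta_0)$, so the claim does not follow. You need an outcome at least as large as the average, $F(\theta_0)\geq\mathbb E F$.

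This also occurs with positive probability when $m:=\mathbb E F<\infty$. In that case $F<\infty$ almost surely. If $\mathbb P(F\geq m)=0$, then $m-F>0$ almost surely, so $0=\int_\Theta(m-F)\,\ud\mathbb P>0$, a contradiction. Choosing $\theta_0$ in $\{F\geq m\}$ gives $\int_X\Phi(\kappa H_w)\,\ud\mu\leq m\leq F(\theta_0)$. This is exactly how the paper concludes; it phrases the contradiction through the sets $\{m-F\geq1/n\}$. With this correction your proof is complete.
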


\begin{proof}
	Enumerate $\mathcal U=\{I_1,I_2,\ldots\}$, with the evident modification when the
	family is finite, and put
	\[
		A_j:=\{\theta\in\Theta:I_j\in\mathcal G(\theta)\}.
	\]
	For $N\geq1$, define
	\[
		H_N(x):=\sum_{j=1}^Nw(I_j)1_{I_j}(x),
		\qquad
		h_{\theta,N}(x):=\sum_{j=1}^N1_{A_j}(\theta)1_{I_j}(x).
	\]
	For every fixed $x\in X$, finite linearity of expectation and
	\eqref{eq:abstract-inclusion-probabilities} give
	\[
		\mathbb E h_{\theta,N}(x)
		=\sum_{j=1}^N\mathbb P(A_j)1_{I_j}(x)
		\geq\kappa H_N(x).
	\]
	Since $\Phi$ is increasing and convex, Jensen's inequality yields
	\[
		\Phi\bigl(\kappa H_N(x)\bigr)
		\leq\Phi\bigl(\mathbb E h_{\theta,N}(x)\bigr)
		\leq\mathbb E\Phi\bigl(h_{\theta,N}(x)\bigr)
		\leq\mathbb E\Phi\bigl(h_\theta(x)\bigr).
	\]
	Integrating this pointwise inequality over $X$ and interchanging the two non-negative
	integrals on the right gives
	\[
		\int_X\Phi\bigl(\kappa H_N(x)\bigr)\,\ud\mu(x)
		\leq\int_\Theta\int_X\Phi\bigl(h_\theta(x)\bigr)
		\,\ud\mu(x)\,\ud\mathbb P(\theta).
	\]
	Finally, $H_N\uparrow H_w$ because all the weights are non-negative. Since every
	finite-valued increasing convex function on $[0,\infty)$ is continuous,
	$\Phi(\kappa H_N)\uparrow\Phi(\kappa H_w)$. The Monotone Convergence Theorem
	proves \eqref{eq:abstract-convex-transfer}.

	For the final assertion, put
	\[
		F(\theta):=\int_X\Phi\bigl(h_\theta(x)\bigr)\,\ud\mu(x).
	\]
	The function
	\[
		(\theta,x)\longmapsto h_\theta(x)
		=\sum_{j\geq1}1_{A_j}(\theta)1_{I_j}(x)
	\]
	is measurable on $\Theta\times X$ as a countable sum of measurable functions. Since
	$\Phi$ is continuous, $\Phi(h_\theta(x))$ is also jointly measurable. Hence $F$ is
	measurable as the integral in $x$ of a non-negative jointly measurable function. By
	hypothesis,
	\[
		m:=\int_\Theta F(\theta)\,\ud\mathbb P(\theta)<\infty.
	\]
	Let $E:=\{\theta:F(\theta)<\infty\}$, so $\mathbb P(E)=1$. Suppose that
	$\{\theta\in E:F(\theta)\geq m\}=\varnothing$. Then
	\[
		E=\bigcup_{n=1}^\infty D_n,
		\qquad
		D_n:=\{\theta\in E:m-F(\theta)\geq1/n\}.
	\]
	Hence $\mathbb P(D_n)>0$ for some $n$, and
	\[
		0=\int_E(F-m)\,\ud\mathbb P
		=-\int_E(m-F)\,\ud\mathbb P
		\leq-\frac1n\mathbb P(D_n)<0,
	\]
	a contradiction. Choose $\theta_0\in E$ such that $F(\theta_0)\geq m$. Then
	the first part of the proof, namely \eqref{eq:abstract-convex-transfer}, gives the
	first inequality in
	\[
		\int_X\Phi\bigl(\kappa H_w(x)\bigr)\,\ud\mu(x)
		\leq m\leq F(\theta_0)
		=\int_X\Phi\bigl(h_{\theta_0}(x)\bigr)\,\ud\mu(x)
	\]
	and proves the final assertion.
\end{proof}

\subsection{Exclusion events from summable weights}
\label{sec:stationary-one-depth-selection}

A decreasing summable weight on $\N$ produces the following
exclusion family on $\Z$. Its principal application is the diagonal-envelope
exponential theorem in Subsection \ref{sec:zygmund-full-exponential}; its later
one-depth use merely packages the classical residue-class argument in the present
probabilistic framework.

\begin{lem}[Exclusion events from summable weights]
\label{lem:stationary-one-depth-events}
Let $\omega\colon\N\to[0,\infty)$ be decreasing, summable and nonzero.
There exist a probability space $(\Theta_\omega,\mathcal A_\omega,\mathbb P_\omega)$ and
an exclusion family $(\mathsf S^\omega_{a,b})_{a\in\Z,\,b\geq0}$ such that
\begin{equation}\label{eq:stationary-one-depth-probability}
	\mathbb P_\omega(\mathsf S^\omega_{a,b})
	=\frac{\omega(b)}{\|\omega\|_{\ell^1(\N)}},
	\qquad a\in\Z,\quad b\geq0.
\end{equation}
Moreover, these events satisfy the two-sided exclusion property
\begin{equation}\label{eq:stationary-two-sided-exclusion}
	\mathsf S^\omega_{a,b}\cap\mathsf S^\omega_{a',b'}=\varnothing
	\quad\text{whenever}\quad
	0<|a-a'|\leq\max\{b,b'\}.
\end{equation}
\end{lem}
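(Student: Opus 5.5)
The plan is to realize $(\mathsf S^\omega_{a,b})$ as the "marked, then a gap" events of a \emph{stationary renewal process on $\Z$}. This is the two-sided analogue of the random marked integers of Subsection \ref{sec:random-marked-packing}, and the stationarity is what makes the probabilities independent of $a$.

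\textbf{The renewal process and the events.} Normalize $p(b):=\omega(b)/\omega(0)$. Then $p$ is decreasing, $p(0)=1$, and $p$ is summable. Let $G\geq1$ be an integer random variable with tail
\[
	\mathbb P(G>b)=p(b),\qquad b\geq0.
\]
Its mean is
\[
	\mu:=\mathbb E G=\sum_{b\geq0}p(b)=\|\omega\|_{\ell^1(\N)}/\omega(0)<\infty.
\]
Next consider the Markov chain $(R_n)_{n\in\Z}$ on $\{1,2,\ldots\}$ that records the residual time to the next mark. From state $r\geq2$ it moves deterministically to $r-1$. From state $1$ it moves to a fresh independent copy of $G$. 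A direct check shows that
\[
	\nu(r):=\mathbb P(G\geq r)/\mu,\qquad r\geq1,
\]
is a stationary probability distribution; here finiteness of $\mu$ is exactly where summability of $\omega$ enters. Using $\nu$ and the transition kernel, I define consistent finite-dimensional distributions on every window $\{-n,\ldots,n\}$. Kolmogorov's extension theorem, or equivalently an explicit product space of i.i.d.\ gaps together with a stationary initial state, then produces $(\Theta_\omega,\mathcal A_\omega,\mathbb P_\omega)$ carrying a stationary chain indexed by all of $\Z$.

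Declare $a\in\Z$ \emph{marked} if $R_{a-1}=1$, that is, a renewal occurs at $a$. When this happens, the gap to the next mark is $R_a$. Set
\[
	\mathsf S^\omega_{a,b}:=\{R_{a-1}=1,\ R_a>b\},
\]
which is the event that $a$ is marked and $a+1,\ldots,a+b$ are all unmarked.

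\textbf{Probabilities.} By stationarity and the Markov property,
\[
	\mathbb P_\omega(\mathsf S^\omega_{a,b})=\nu(1)\,\mathbb P(G>b)=\frac{p(b)}{\mu}=\frac{\omega(b)}{\|\omega\|_{\ell^1(\N)}},
\]
since $\nu(1)=1/\mu$. This gives \eqref{eq:stationary-one-depth-probability}.

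\textbf{Exclusion.} The exclusion property is proved exactly as in \eqref{eq:marked-event-exclusion}. Suppose $a<a'\leq a+b$ and $\omega'\in\mathsf S^\omega_{a,b}$. Then $a'$ is unmarked, whereas $\mathsf S^\omega_{a',b'}$ requires $a'$ to be marked, so the two events are disjoint. For the two-sided property \eqref{eq:stationary-two-sided-exclusion}, the case $0<a'-a\leq b$ is the one just treated. The case $0<a-a'\leq b'$ follows from the same argument with the roles of $(a,b)$ and $(a',b')$ swapped. These two cases cover all pairs with $0<|a-a'|\leq\max\{b,b'\}$.

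\textbf{Main obstacle.} The only real work is the rigorous construction of a stationary bi-infinite chain and the verification that $\nu$ is invariant. For invariance, the incoming mass at state $r$ is $\nu(r+1)+\nu(1)\mathbb P(G=r)$. Multiplying by $\mu$, this equals $\mathbb P(G\geq r+1)+\mathbb P(G=r)=\mathbb P(G\geq r)=\mu\,\nu(r)$, as required. The extension to all of $\Z$ is a routine application of Kolmogorov's theorem on the countable product $\{1,2,\ldots\}^{\Z}$.
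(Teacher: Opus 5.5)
\textbf{The two-sided exclusion property fails for your process.} Your construction does give an exclusion family with the right probabilities. The one-sided property \eqref{eq:abstract-exclusion-property} holds, and $\nu(1)\,\mathbb P(G>b)=\omega(b)/\|\omega\|_{\ell^1(\N)}$ is correct. The gap is the two-sided property \eqref{eq:stationary-two-sided-exclusion}. Your two cases, $0<a'-a\le b$ and $0<a-a'\le b'$, bound the distance only by the $b$-value of the \emph{smaller} index. They miss $a<a'$ with $b<a'-a\le b'$, and its mirror image, so the claim that they ``cover all pairs'' is false. In the missing case you need the following: a mark at $a'$ followed by more than $b'$ unmarked integers must also be \emph{preceded} by more than $b'$ unmarked integers. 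For a renewal process with i.i.d.\ gaps this fails, because the backward and forward gaps at a renewal are independent. Indeed, $\mathsf S^\omega_{a,b}\cap\mathsf S^\omega_{a',b'}$ contains $\{R_{a-1}=1,\ R_a=a'-a,\ R_{a'}>b'\}$, whose probability is $\mu^{-1}\,\mathbb P(G=a'-a)\,\mathbb P(G>b')$. Concretely, for $\omega(b)=2^{-b}$ your process is i.i.d.\ fair-coin marking of $\Z$. Then $\mathbb P(\mathsf S^\omega_{0,0}\cap\mathsf S^\omega_{1,1})=\mathbb P(0,1\text{ marked},\ 2\text{ unmarked})=1/8>0$, although $|0-1|\le\max\{0,1\}$.

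This is not cosmetic. In the proof of Theorem \ref{thm:diagonal-envelope-exponential}, configuration \eqref{eq:first-intersection} has $\nu_1(I)>\nu_1(J)$ and needs $\nu_1(I)-\nu_1(J)>D_I$. That is the depth attached to the \emph{larger} index, which one-sided exclusion does not provide.

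\textbf{How to repair it.} Make all gaps equal instead of independent. Keep your gap law $G$, but draw a single period $g$ from the size-biased law $g\,\mathbb P(G=g)/\mu$, together with a uniform phase $r\in\{0,\ldots,g-1\}$, and mark $r+g\Z$. Then $\mathsf S^\omega_{a,b}=\{a\text{ marked},\ g>b\}$. Its probability is $\sum_{g>b}\frac{g\,\mathbb P(G=g)}{\mu}\cdot\frac1g=\mathbb P(G>b)/\mu=\omega(b)/\|\omega\|_{\ell^1(\N)}$. Any two distinct marks are at least $g>\max\{b,b'\}$ apart, which is exactly \eqref{eq:stationary-two-sided-exclusion}. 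Since $\mathbb P(G=k+1)=\delta(k)/\omega(0)$ with $\delta(k)=\omega(k)-\omega(k+1)$, this is precisely the paper's construction on the discrete space $\{(k,r):0\le r\le k\}$. It also avoids Kolmogorov's theorem.

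A smaller point about your version: on the Kolmogorov space, $\{R_{a-1}=1,\ R_a>b\}$ agrees with the marked description only almost surely. Lemma \ref{lem:exclusion-packing}, however, uses disjointness for every outcome. You would have to restrict to the full-measure set of paths that obey the transition rule.
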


\begin{proof}
Define
\[
	\delta(k):=\omega(k)-\omega(k+1),
	\qquad k\geq0.
\]
These numbers are nonnegative. Since summability gives $\omega(N)\to0$, telescoping
first gives
\begin{equation}\label{eq:one-depth-weight-telescoping}
	\omega(j)=\sum_{k=j}^{\infty}\delta(k),
	\qquad j\geq0.
\end{equation}
A second finite telescoping calculation gives
\[
	\begin{aligned}
		\sum_{k=0}^N(k+1)\delta(k)
		&=\sum_{k=0}^N(k+1)\omega(k)
		  -\sum_{k=0}^N(k+1)\omega(k+1)\\
		&=\sum_{k=0}^N(k+1)\omega(k)
		  -\sum_{j=1}^{N+1}j\omega(j)\\
		&=\sum_{k=0}^N\omega(k)-(N+1)\omega(N+1).
	\end{aligned}
\]
Moreover, monotonicity and summability imply
\[
	(N+1)\omega(N+1)
	\leq2\sum_{k=\lfloor(N+1)/2\rfloor}^{N+1}\omega(k)
	\leq2\sum_{k=\lfloor(N+1)/2\rfloor}^{\infty}\omega(k)
	\longrightarrow0.
\]
Letting $N\to\infty$ therefore yields
\begin{equation}\label{eq:one-depth-delta-mass}
	\sum_{k=0}^{\infty}(k+1)\delta(k)=\|\omega\|_{\ell^1(\N)}.
\end{equation}

Take
\[
	\Theta_\omega:=\{(k,r):k\geq0,\ 0\leq r\leq k\},
	\qquad
	\mathcal A_\omega:=\mathcal P(\Theta_\omega),
\]
and define the discrete probability measure
\[
	\mathbb P_\omega(\{(k,r)\})
	:=\frac{\delta(k)}{\|\omega\|_{\ell^1(\N)}}.
\]
Identity \eqref{eq:one-depth-delta-mass} shows that the masses sum to one. For
$\theta=(k,r)\in\Theta_\omega$, mark the integers
\[
	\mathcal T(\theta):=\{n\in\Z:n\equiv r\pmod{k+1}\}.
\]
For $a\in\Z$ and $b\geq0$, let $\mathsf S^\omega_{a,b}$ be the event that $a$ is
marked while $a+1,\ldots,a+b$ are unmarked. If $\theta=(k,r)$ belongs to both
$\mathsf S^\omega_{a,b}$ and $\mathsf S^\omega_{a',b'}$, then
$k\geq\max\{b,b'\}$ and both $a$ and $a'$ are congruent to $r$ modulo $k+1$. Thus,
if $a\ne a'$, then
\[
	|a-a'|\geq k+1>\max\{b,b'\}.
\]
This proves \eqref{eq:stationary-two-sided-exclusion}; in particular, the events form
an exclusion family on $\Z$.

For a fixed $k\geq b$, there is exactly one $r\in\{0,\ldots,k\}$ for which $a$ is
marked, and the next marked integer is $a+k+1>a+b$. If $k<b$, no such $r$ contributes.
Consequently, \eqref{eq:one-depth-weight-telescoping} gives
\[
	\mathbb P_\omega(\mathsf S^\omega_{a,b})
	=\frac1{\|\omega\|_{\ell^1(\N)}}\sum_{k=b}^{\infty}\delta(k)
	=\frac{\omega(b)}{\|\omega\|_{\ell^1(\N)}},
\]
which is \eqref{eq:stationary-one-depth-probability}.
\end{proof}

\section{Concrete fixed-total weights}
\label{sec:concrete-fixed-total-weights}

The fixed-total criterion becomes concrete once membership in
$\mathcal W_{\mathrm{ft}}\cap\mathcal W_{12}$ and the corresponding fixed-total norm
have been determined. We first determine the exact range of the two-parameter power
weights, including quantitative bounds on the critical line. We then give an asymmetric
logarithmic endpoint and a genuinely coupled ratio-logarithmic weight.

\subsection{Power weights and the sharp range}
\label{sec:power-weight-range}

For $\alpha,\beta\geq0$, define
\[
	w_{\alpha,\beta}(a,b):=(a+1)^{-\alpha}(b+1)^{-\beta},
	\qquad a,b\geq0.
\]
Plainly $w_{\alpha,\beta}\in\mathcal W_{12}$. The following calculation completely
determines when it has finite fixed-total norm.

\begin{lem}[Fixed-total norm of the power weights]
\label{lem:power-weight-fixed-total-range}
For $\alpha,\beta\geq0$, we have $w_{\alpha,\beta}\in\mathcal W_{\mathrm{ft}}$ if and
only if
\[
	\alpha+\beta\geq1,
	\qquad
	(\alpha,\beta)\notin\{(1,0),(0,1)\}.
\]
More quantitatively, if $\alpha,\beta>0$ and $\alpha+\beta\geq1$, then, with
\[
	m_{\alpha,\beta}:=\min\{\alpha,\beta,1/2\},
\]
we have
\[
	\|w_{\alpha,\beta}\|_{\mathrm{ft}}
	\leq\frac1{m_{\alpha,\beta}(1-m_{\alpha,\beta})}.
\]
On the critical line $\alpha+\beta=1$ with $\alpha,\beta>0$, this dependence on
$\alpha$ and $\beta$ is sharp up to an absolute factor:
\begin{equation}\label{eq:power-weight-critical-line}
	\frac1{4\alpha\beta}
	\leq\|w_{\alpha,\beta}\|_{\mathrm{ft}}
	\leq\frac1{\alpha\beta}.
\end{equation}
The admissible axis cases satisfy
\[
	\|w_{\alpha,0}\|_{\mathrm{ft}}=\sum_{k=1}^\infty k^{-\alpha}
	\quad(\alpha>1),
	\qquad
	\|w_{0,\beta}\|_{\mathrm{ft}}=\sum_{k=1}^\infty k^{-\beta}
	\quad(\beta>1).
\]
\end{lem}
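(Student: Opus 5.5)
The whole lemma reduces to estimating the diagonal sums
\[
	S_N(\alpha,\beta):=\sum_{a=0}^N(a+1)^{-\alpha}(N-a+1)^{-\beta},
\]
and taking the supremum over $N$. The plan is to prove the upper bounds first, then the lower bound on the critical line, and finally necessity.

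\emph{Upper bounds.} The diagonal sum is symmetric in $(\alpha,\beta)$ under $a\mapsto N-a$, and $w_{\alpha,\beta}$ decreases in each exponent. So if $\alpha,\beta>0$ and $\alpha+\beta\geq1$, it suffices to treat $\alpha+\beta=1$ after lowering the exponents. Put $m=\min\{\alpha,\beta,1/2\}$. If both $\alpha,\beta\geq1/2$, compare with $(1/2,1/2)$. Otherwise, say $\alpha<1/2$; then compare with $(\alpha,1-\alpha)$, noting $\beta\geq1-\alpha$. In either case we land on the critical line with smaller exponent $m$.

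On the critical line, split the sum at $a\leq N/2$ and $a>N/2$.
\begin{itemize}
	\item For $a\leq N/2$ we have $N-a+1\geq(N+2)/2$. The contribution is at most $((N+2)/2)^{-\beta}\sum_{k=1}^{\lfloor N/2\rfloor+1}k^{-\alpha}$.
	\item The sum is bounded by the integral estimate $\sum_{k=1}^{M}k^{-\alpha}\leq M^{1-\alpha}/(1-\alpha)=M^{\beta}/\beta$.
	\item Since $M\leq(N+2)/2$, this half contributes at most $1/\beta$.
	\item Symmetrically, the half $a>N/2$ contributes at most $1/\alpha$.
\end{itemize}
This gives $S_N\leq1/\alpha+1/\beta=1/(\alpha\beta)$, which is the upper half of \eqref{eq:power-weight-critical-line}. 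Since $m(1-m)\leq\alpha\beta$ after the reduction, the general bound $1/(m(1-m))$ follows. The integral comparison needs care at $k=1$; the clean form is $\sum_{k=1}^{M}k^{-\alpha}\leq\int_0^{M}x^{-\alpha}\,\ud x$.

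\emph{Lower bound on the critical line.} Take $N$ odd and large. In the half $a\leq N/2$, use $N-a+1\leq N+1$ to get at least $(N+1)^{-\beta}\sum_{k=1}^{(N+1)/2}k^{-\alpha}$. The integral comparison gives $\sum_{k=1}^{M}k^{-\alpha}\geq((M+1)^{\beta}-1)/\beta$. Letting $N\to\infty$, this half is asymptotically at least $2^{-\beta}/\beta\geq1/(2\beta)$, and the other half symmetrically gives $1/(2\alpha)$. Summing, the supremum is at least $\frac{1}{2\alpha}+\frac{1}{2\beta}=\frac1{2\alpha\beta}\geq\frac1{4\alpha\beta}$. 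There is slack, so only crude asymptotics are needed.

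\emph{Axis cases and necessity.} For $\beta=0$ the sum $S_N=\sum_{k=1}^{N+1}k^{-\alpha}$ increases to $\zeta(\alpha)$. It is finite exactly when $\alpha>1$, which excludes $(1,0)$ and $(0,1)$ and yields the stated values. If $\alpha+\beta<1$, the terms with $a\leq N/2$ alone give $S_N\gtrsim N^{-\beta}N^{1-\alpha}\to\infty$. This settles the "only if" direction, and the "if" direction is covered by the axis cases together with the upper bounds.

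The main obstacle is keeping the constants clean in the integral comparisons, especially near $k=1$ and at the midpoint of the split, so that exactly $1/(\alpha\beta)$ and $1/(4\alpha\beta)$ come out. I would allow the factor-$2$ slack in the lower bound to absorb these.
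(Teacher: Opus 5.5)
Your proposal is correct and follows essentially the paper's proof. It uses the same reduction to the critical-line weight with exponents $\{m,1-m\}$, the same split of $S_N$ at $a=N/2$ with the integral comparison giving $1/\alpha+1/\beta$, and the same handling of the axis cases and of $\alpha+\beta<1$. The only difference is in the lower bound: you keep both halves of the odd-$N$ sum and get $2^{-\beta}/\beta+2^{-\alpha}/\alpha\geq 1/(2\alpha\beta)$, whereas the paper uses one half, getting $2^{-s}/s$ for $s\leq 1/2$ and then interchanging the variables. Your version gives a slightly better constant and needs no case split.
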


\begin{proof}
	Put
	\[
		S_N(\alpha,\beta)
		:=\sum_{a=0}^N(a+1)^{-\alpha}(N-a+1)^{-\beta}.
	\]
	We first estimate the critical line. Fix $0<s<1$ and put $L:=(N+2)/2$. On the
	part of the sum where $a\leq N/2$, we have $N-a+1\geq L$, and hence
	\[
		\begin{aligned}
		\sum_{0\leq a\leq N/2}w_{s,1-s}(a,N-a)
		&\leq L^{-(1-s)}\sum_{0\leq a\leq N/2}(a+1)^{-s}\\
		&\leq L^{-(1-s)}\int_0^L x^{-s}\,\ud x
		=\frac1{1-s}.
		\end{aligned}
	\]
	On the complementary part, put $b:=N-a$. Then $0\leq b<N/2$ and
	$N-b+1>L$, so the symmetric calculation gives
	\[
		\sum_{N/2<a\leq N}w_{s,1-s}(a,N-a)\leq\frac1s.
	\]
	Consequently,
	\begin{equation}\label{eq:power-weight-critical-upper}
		\|w_{s,1-s}\|_{\mathrm{ft}}
		\leq\frac1{1-s}+\frac1s
		=\frac1{s(1-s)}.
	\end{equation}

	We next prove the lower bound in \eqref{eq:power-weight-critical-line}. Assume first
	that $0<s\leq1/2$ and restrict to odd totals $N=2M-1$. After the
	change of variables $k=2M-a$, the terms with $1\leq k\leq M$ satisfy
	$a+1=2M-k+1\leq2M$. Therefore
	\[
		\begin{aligned}
		S_{2M-1}(s,1-s)
		&\geq(2M)^{-s}\sum_{k=1}^{M}k^{s-1}\\
		&\geq\frac{(2M)^{-s}}s\bigl((M+1)^s-1\bigr).
		\end{aligned}
	\]
	Letting $M\to\infty$ gives
	\[
		\|w_{s,1-s}\|_{\mathrm{ft}}
		\geq\frac{2^{-s}}s
		\geq\frac1{2s}
		\geq\frac1{4s(1-s)}.
	\]
	For $1/2\leq s<1$, interchange the variables. Consequently, for
	$w_s:=w_{s,1-s}$, we have proved
	\begin{equation}\label{eq:critical-power-fixed-total}
		\frac1{4s(1-s)}
		\leq\|w_s\|_{\mathrm{ft}}
		\leq\frac1{s(1-s)},
		\qquad 0<s<1.
	\end{equation}
	This proves \eqref{eq:power-weight-critical-line}.

	Now suppose that $\alpha,\beta>0$ and $\alpha+\beta\geq1$. Choose
	$t\in(0,1)$ by
	\[
		t:=
		\begin{cases}
			\alpha,&\alpha<1/2,\\
			1-\beta,&\beta<1/2,\\
			1/2,&\alpha,\beta\geq1/2.
		\end{cases}
	\]
	The first two cases cannot occur simultaneously. In every case,
	$t\leq\alpha$, $1-t\leq\beta$, and
	$t(1-t)=m_{\alpha,\beta}(1-m_{\alpha,\beta})$. Thus
	$w_{\alpha,\beta}\leq w_{t,1-t}$ pointwise, and
	\eqref{eq:power-weight-critical-upper} yields the asserted quantitative upper bound.

	If $\beta=0$, then
	\[
		\|w_{\alpha,0}\|_{\mathrm{ft}}
		=\sup_{N\geq0}\sum_{a=0}^N(a+1)^{-\alpha}
		=\sum_{k=1}^\infty k^{-\alpha},
	\]
	which is finite precisely when $\alpha>1$. The case $\alpha=0$ is symmetric.

	If $\alpha+\beta<1$, the terms with $N/3\leq a\leq2N/3$ give, for $N\geq6$,
	\[
		S_N(\alpha,\beta)
		\geq\left(\frac N3-1\right)(N+1)^{-\alpha-\beta}
		\longrightarrow\infty.
	\]
	At $(\alpha,\beta)=(1,0)$ the sums $S_N(1,0)$ are the harmonic partial sums, and
	the case $(0,1)$ is symmetric. Thus the fixed-total norm is infinite in every remaining
	case.
\end{proof}

The critical packing cannot follow from one-depth estimates by pointwise domination: if
$w_s(a,b)\leq u(a)+v(b)$ with $u,v\in\ell^1(\N)$, then
$1/(n+1)=w_s(n,n)\leq u(n)+v(n)$, contradicting summability.

\subsection{A logarithmic critical weight}
\label{sec:logarithmic-weight}

The power weights are not the only genuinely critical weights allowed by the
fixed-total criterion. The following asymmetric example decays harmonically in one depth
and only logarithmically in the other. Put
\[
	\Lambda(n):=1+\log(n+1),
	\qquad n\geq0,
\]
and define
\[
	w_{\log}(a,b)
	:=\frac1{\Lambda(a)(b+1)},
	\qquad a,b\geq0.
\]
Plainly $w_{\log}\in\mathcal W_{12}$, although neither of its one-variable
factors is summable.

\begin{lem}[The logarithmic weight]
\label{lem:logarithmic-critical-weight}
The weight $w_{\log}$ belongs to
$\mathcal W_{\mathrm{ft}}\cap\mathcal W_{12}$, and
\[
	\|w_{\log}\|_{\mathrm{ft}}\leq2.
\]
\end{lem}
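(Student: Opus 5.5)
Membership in $\mathcal W_{12}$ is immediate: $\Lambda$ is increasing and $b\mapsto(b+1)^{-1}$ is decreasing, so $w_{\log}$ is nonincreasing in each variable separately. The substance is therefore the bound
\[
	S_N:=\sum_{a=0}^N\frac1{\Lambda(a)(N-a+1)}\leq2,\qquad N\geq0.
\]
The plan is to split the sum at $a=N/2$, exactly as in the critical-line estimate in the proof of Lemma \ref{lem:power-weight-fixed-total-range}, and to bound each half by $1$.

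For the terms with $N/2<a\leq N$, put $b:=N-a$, so that $0\leq b<N/2$. Here $\Lambda(a)\geq\Lambda(N/2)\geq1+\log((N+2)/2)$, and the harmonic sum $\sum_{b=0}^{B}(b+1)^{-1}$ with $B<N/2$ is at most $1+\log(B+1)\leq1+\log((N+2)/2)$. This uses the standard bound $H_m\leq1+\log m$ with $m=B+1\leq\lceil N/2\rceil\leq(N+2)/2$. So this half is at most $\Lambda(\lceil N/2\rceil)^{-1}\cdot(1+\log\lceil N/2\rceil)\leq1$. To be careful, I would note that $a\geq\lceil N/2\rceil$ in this range (indeed $a\geq\lfloor N/2\rfloor+1$), so $\Lambda(a)\geq1+\log(\lfloor N/2\rfloor+2)$, while the number of $b$-terms is at most $\lceil N/2\rceil\leq\lfloor N/2\rfloor+1$. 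Hence the harmonic sum is at most $1+\log(\lfloor N/2\rfloor+1)$, and the ratio is at most $1$.

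For the terms with $0\leq a\leq N/2$, use $N-a+1\geq N/2+1=(N+2)/2$. This gives the bound
\[
	\frac{2}{N+2}\sum_{0\leq a\leq N/2}\frac1{\Lambda(a)}\leq\frac{2}{N+2}\Bigl(\tfrac N2+1\Bigr)=1,
\]
since $\Lambda\geq1$. The number of terms is $\lfloor N/2\rfloor+1\leq(N+2)/2$. Adding the two halves gives $S_N\leq2$. Since each term of $S_N$ is nonnegative, taking the supremum over $N$ yields $\|w_{\log}\|_{\mathrm{ft}}\leq2$, and hence $w_{\log}\in\mathcal W_{\mathrm{ft}}$.

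The only delicate point is the bookkeeping of the endpoints in the second half. The harmonic partial sum must be matched against $\Lambda$ evaluated at the smallest admissible $a$, so I would check that $H_{\lceil N/2\rceil}\leq1+\log(\lfloor N/2\rfloor+2)$. The cases $N=0$ and $N=1$ should be verified by hand: $S_0=1$ and $S_1=\tfrac12+\Lambda(1)^{-1}\leq2$. Otherwise, the crude estimate $\Lambda\geq1$ in the first half suffices, and the logarithmic gain is used only in the second half.
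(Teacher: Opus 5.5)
Your proposal is correct and follows the paper's proof essentially verbatim. You split the fixed-total line at $a=N/2$, bound the first half by $1$ using $N-a+1\geq(N+2)/2$ and $\Lambda\geq1$, and bound the second half by $1$ by comparing the harmonic sum over $b<N/2$ with $\Lambda(a)\geq1+\log((N+2)/2)$; your endpoint bookkeeping ($\lceil N/2\rceil\leq\lfloor N/2\rfloor+1$, $H_m\leq1+\log m$, and the small cases) and the $\mathcal W_{12}$ monotonicity check are sound.
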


\begin{proof}
Fix $N\geq0$, put $L:=(N+2)/2$, and split the fixed-total line at $N/2$.
On the first half, $N-a+1\geq L$ and $\Lambda(a)\geq1$, so
\[
	\sum_{0\leq a\leq N/2}\frac1{\Lambda(a)(N-a+1)}
	\leq\frac1L\sum_{0\leq a\leq N/2}1\leq1.
\]
On the complementary half, set $b:=N-a$. Then $0\leq b<N/2$ and
$N-b+1>L$, so $\Lambda(N-b)>1+\log L$. The usual harmonic-sum estimate then gives
\[
	\begin{aligned}
	\sum_{N/2<a\leq N}\frac1{\Lambda(a)(N-a+1)}
	&=\sum_{0\leq b<N/2}\frac1{\Lambda(N-b)(b+1)}\\
	&\leq\frac1{1+\log L}\sum_{0\leq b<N/2}\frac1{b+1}
	\leq1.
	\end{aligned}
\]
Adding these estimates and taking the supremum over $N$ proves the asserted norm bound.
\end{proof}

This weight is not obtained by comparison with any single critical power weight. Indeed,
for every fixed $0<s<1$,
\[
	\frac{w_{\log}(a,0)}{w_s(a,0)}
	=\frac{(a+1)^s}{\Lambda(a)}\longrightarrow\infty
	\qquad\text{as }a\to\infty.
\]
Thus the logarithmic weight gives a genuinely different application of the exact
fixed-total criterion.

\subsection{A genuinely coupled weight}
\label{sec:ratio-logarithmic-weight}

The preceding examples have tensor-product form. The next one combines harmonic decay in
the smaller depth with logarithmic decay in the ratio of the two shifted depths. Define
\begin{equation*}
	w_{\mathrm{rat}}(a,b)
	:=\frac{1}{(1+\min\{a,b\})
		\left(2+\left|\log\frac{a+1}{b+1}\right|\right)^2},
	\qquad a,b\geq0.
\end{equation*}

\begin{lem}[A coupled fixed-total weight]
\label{lem:ratio-logarithmic-weight}
The weight $w_{\mathrm{rat}}$ belongs to
$\mathcal W_{\mathrm{ft}}\cap\mathcal W_{12}$, and
\begin{equation}\label{eq:ratio-logarithmic-fixed-total}
	\|w_{\mathrm{rat}}\|_{\mathrm{ft}}\leq\frac32.
\end{equation}
\end{lem}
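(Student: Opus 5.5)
The plan is to verify the two monotonicity properties directly and then bound each fixed-total line by splitting it at the diagonal and comparing with an explicit antiderivative. Write $m:=\min\{a,b\}$ and $M:=\max\{a,b\}$. By the symmetry $w_{\mathrm{rat}}(a,b)=w_{\mathrm{rat}}(b,a)$, it suffices to show $w_{\mathrm{rat}}\in\mathcal W_2$; then $w_{\mathrm{rat}}^\top=w_{\mathrm{rat}}\in\mathcal W_2$ as well.

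\emph{Monotonicity.} Fix $a$ and increase $b$ to $b+1$.
\begin{itemize}
\item If $b\geq a$, then $\min\{a,b\}=a$ is unchanged, while $\log\frac{b+1}{a+1}\geq0$ increases, so the weight decreases.
\item If $b<a$, put $x:=b+1$ and $A:=a+1$, and consider the denominator $f(x):=x\bigl(2+\log(A/x)\bigr)^2$ on $1\leq x\leq A$. Writing $L:=\log(A/x)\geq0$, one computes
\[
	f'(x)=(2+L)^2-2(2+L)=(2+L)L\geq0 .
\]
Hence the denominator does not decrease in the step $b\to b+1\leq a$. At $b+1=a$ the two formulas coincide, so this step is consistent with the case $b\geq a$.
\end{itemize}
This gives $w_{\mathrm{rat}}\in\mathcal W_{12}$.

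\emph{Fixed-total bound.} Fix $N$. For $N=0$ the sum is the single term $w_{\mathrm{rat}}(0,0)=1/4$, so assume $N\geq1$. Split the line $a+b=N$ into the terms with $a\leq b$ and those with $a\geq b$ (the diagonal term, if present, is counted twice, which only helps the upper bound). By symmetry, both halves equal
\[
	\sum_{0\leq m\leq N/2}\frac1{(1+m)\bigl(2+\log\frac{N-m+1}{m+1}\bigr)^2}.
\]
Put $x:=m+1$ and $n:=N+2$, so that $1\leq x\leq n/2$. Then
\[
	\frac{N-m+1}{m+1}=\frac{n-x}{x}\geq\frac{c}{x},\qquad c:=\frac n2\geq x .
\]
So each term is at most $g(x):=1/\bigl(x(2+\log(c/x))^2\bigr)$. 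By the derivative computation above, $g$ is decreasing on $[1,c]$. It also has the explicit antiderivative
\[
	\frac{d}{dx}\,\frac1{2+\log(c/x)}=g(x).
\]

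Comparing the sum with the integral then gives
\[
	\sum_{1\leq x\leq c}g(x)\leq g(1)+\int_1^cg
	=t^2+\frac12-t,\qquad t:=\frac1{2+\log c}\in(0,\tfrac12].
\]
Since $t\leq\frac12<1$, we have $t^2-t\leq0$, so each half is at most $1/2$. The full line sum is therefore at most $1$, and in particular at most $3/2$, which proves \eqref{eq:ratio-logarithmic-fixed-total}.

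The main point needing care is this sum–integral comparison. One must check that the summation range $x\leq n/2$ stays inside the interval $[1,c]$ where $g$ is decreasing and $\log(c/x)\geq0$. One must also make sure that the replacement of $(n-x)/x$ by $c/x$ goes in the direction that increases the terms, which it does because the denominator is increasing in the ratio. Boundary cases (small $N$, or $c$ close to $1$) are absorbed by the slack between $1$ and $3/2$.
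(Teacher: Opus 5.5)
Your proof is correct and follows the paper's argument essentially step for step. Both use the same derivative computation $(2+L)L\ge0$ for monotonicity, split each fixed-total line at $N/2$, and compare with $g(x)=1/\bigl(x(2+\log(c/x))^2\bigr)$, $c=(N+2)/2$, through the antiderivative $1/(2+\log(c/x))$. The only difference is that you keep the exact value $g(1)=t^2$ and the $-t$ term from the integral, whereas the paper uses $g(1)\le 1/4$ and discards $-t$; this gives you the slightly sharper bound $\|w_{\mathrm{rat}}\|_{\mathrm{ft}}\le 1$ in place of $3/2$.
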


\begin{proof}
	We first verify coordinatewise monotonicity. Since
	$|\log(x/y)|=-\log(x/y)=\log(y/x)$ when $1\leq x\leq y$, write on this region
	$F(x,y):=\{x[2+\log(y/x)]^2\}^{-1}$.
	The denominator is nondecreasing in $y$, while its derivative with respect to $x$ is
	$[2+\log(y/x)]\log(y/x)\geq0$.
	Thus $F$ is nonincreasing in both variables on this region. By symmetry, $F(y,x)$ gives
	the same conclusion on $1\leq y\leq x$. Hence $w_{\mathrm{rat}}\in\mathcal W_{12}$.

	Fix $N\geq0$. By symmetry,
	\begin{equation*}
		\sum_{a=0}^Nw_{\mathrm{rat}}(a,N-a)
		\leq2\sum_{0\leq a\leq N/2}w_{\mathrm{rat}}(a,N-a).
	\end{equation*}
	For $0\leq a\leq N/2$, we have $N-a+1\geq(N+2)/2$, and hence
	\begin{equation*}
		w_{\mathrm{rat}}(a,N-a)\leq g_N(a+1),
		\qquad
		g_N(t):=\frac{1}{t[2+\log((N+2)/(2t))]^2},
		\quad 1\leq t\leq\frac{N+2}{2}.
	\end{equation*}
	The function $g_N$ is decreasing on this interval: the derivative of its denominator is
	$[2+\log((N+2)/(2t))]\log((N+2)/(2t))\geq0$.
	Since $g_N(1)\leq1/4$, the integral comparison for a decreasing function now gives
	\begin{align*}
		\sum_{0\leq a\leq N/2}w_{\mathrm{rat}}(a,N-a)
		&\leq\frac14+
		\int_1^{(N+2)/2}\frac{\ud t}{t[2+\log((N+2)/(2t))]^2}\\
		&=\frac14+
		\left.\frac{1}{2+\log((N+2)/(2t))}\right|_{t=1}^{t=(N+2)/2}\\
		&=\frac14+\frac12-\frac{1}{2+\log((N+2)/2)}\leq\frac34.
	\end{align*}
	This proves \eqref{eq:ratio-logarithmic-fixed-total}.
\end{proof}

Suppose that $w_{\mathrm{rat}}\leq C\widetilde w$ for some $C>0$ and some product-form
weight $\widetilde w(a,b)=u(a)v(b)$ with $u,v\geq0$. Set $L_n:=2+\log(n+1)$, so that
$w_{\mathrm{rat}}(n,0)=w_{\mathrm{rat}}(0,n)=L_n^{-2}$. Then $\widetilde w(0,0)>0$ and
$\widetilde w(a,b)=\widetilde w(a,0)\widetilde w(0,b)/\widetilde w(0,0)$. For $N\geq1$,
apply this identity to the fixed-total sum with total $N$. Since
$L_a,L_{N-a}\leq L_N$ for $0\leq a\leq N$, the axis formula gives
\begin{equation*}
	\|\widetilde w\|_{\mathrm{ft}}
	\geq\frac1{C^2\widetilde w(0,0)}\sum_{a=0}^{N}\frac1{L_a^2L_{N-a}^2}
	\geq\frac{N+1}{C^2\widetilde w(0,0)L_N^4}.
\end{equation*}
The right-hand side tends to infinity with $N$, so $\|\widetilde w\|_{\mathrm{ft}}=\infty$.
Thus no product-form weight in $\mathcal W_{\mathrm{ft}}$ dominates $w_{\mathrm{rat}}$
up to a constant.

\section{Two-depth packing in the bi-parameter product setting}
\label{sec:planar-critical}

We first record the simple layered family that forces the fixed-total condition. We then
prove the converse, together with the marginal and sparse conclusions of Theorem
\ref{thm:intro-planar-admissible}. The critical power estimate follows directly from
\eqref{eq:critical-power-fixed-total}. The sufficiency argument serves as the model for the
Zygmund proof.

\subsection{The layered test family}

The following family contains the whole necessity argument. It is related to the simplex
construction for dyadic antichains in
\cite{Rey2026antichain}*{Section 3.2.2}, but here every layer is completed to a partition.
We write the example with one-dimensional coordinate factors. In arbitrary fixed
dimensions the construction and proof are unchanged, with dyadic cubes in place of
intervals.

\begin{prop}[The layered test family]\label{prop:layered-test-family}
Work with the standard dyadic grids on $\R$ and put $\Omega:=[0,1)^2$. For
$N\geq0$, let $\mathcal U_N$ consist of all bi-parameter dyadic rectangles
$I=I^1\times I^2\subset\Omega$ such that, for some $0\leq a\leq N$,
\[
	\ell(I^1)=2^{-a},
	\qquad
	\ell(I^2)=2^{-(N-a)}.
\]
Then $\mathcal U_N$ is finite and pairwise incomparable. Moreover, for every
$w\colon\N^2\to[0,\infty)$,
\begin{equation}\label{eq:layered-height-identity}
	H_{w,N}(x)
	:=\sum_{I\in\mathcal U_N}
	w(e_1(I;\Omega),e_2(I;\Omega))1_I(x)
	=\sum_{a=0}^Nw(a,N-a),
	\qquad x\in\Omega,
\end{equation}
and hence
\begin{equation}\label{eq:layered-mass-identity}
	\frac1{|\Omega|}\sum_{I\in\mathcal U_N}
	w(e_1(I;\Omega),e_2(I;\Omega))|I|
	=\sum_{a=0}^Nw(a,N-a).
\end{equation}
Consequently, the constant $C_w$ in any uniform two-depth packing estimate with weight
$w$ satisfies $\|w\|_{\mathrm{ft}}\leq C_w$.
\end{prop}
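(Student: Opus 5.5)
Fix $N$ and the standard dyadic grids with $\Omega=[0,1)^2$. The argument has three steps: finiteness and incomparability, computation of both depths for every rectangle, and a layer count.

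\textbf{Finiteness and incomparability.} For each $a$ there are exactly $2^a\cdot2^{N-a}=2^N$ rectangles with $\ell(I^1)=2^{-a}$ and $\ell(I^2)=2^{-(N-a)}$ inside $\Omega$, so $\mathcal U_N$ is finite. Suppose $I\subsetneq I'$ with $I$ in layer $a$ and $I'$ in layer $a'$. Containment of dyadic intervals forces $a\geq a'$ and $N-a\geq N-a'$, so $a=a'$. Then the side lengths agree, and nested dyadic intervals of equal length coincide, so $I=I'$. This contradicts $I\subsetneq I'$, so the family is incomparable.

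\textbf{The depths.} First we identify the halo. Every dyadic rectangle meeting $[0,1)^2$ either lies inside $\Omega$ or has some side $[0,2^k)$ with $k\geq1$. In the latter case the average of $1_\Omega$ over it is at most $1/2$, so it does not exceed the threshold in $\{M1_\Omega>1/2\}$. Moreover, points outside $\overline\Omega$ lie only in rectangles whose intersection with $\Omega$ has relative measure at most $1/2$. Hence $\widetilde\Omega=\Omega$ up to the boundary issue, and in fact $\widetilde\Omega\subset\Omega$ exactly, since we use $>1/2$ and half-open dyadic cells. Clearly $\Omega\subset\widetilde\Omega$.

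Now take $I$ in layer $a$. The ancestor $I_1^{(k)}$ has first side of length $2^{-a+k}$. It stays inside $[0,1)$ exactly when $k\leq a$, and for $k=a+1$ it has length $2$, so it is not contained in $\widetilde\Omega=\Omega$. Therefore $e_1(I;\Omega)=a$, and in the same way $e_2(I;\Omega)=N-a$.

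\textbf{Layer count and conclusion.} Each layer $a$ partitions $\Omega$ into $2^N$ rectangles, so every $x\in\Omega$ lies in exactly one rectangle of each layer. Summing $w(a,N-a)$ over the layers gives \eqref{eq:layered-height-identity}. Integrating over $\Omega$ gives \eqref{eq:layered-mass-identity}. The pair $(\Omega,\mathcal U_N)$ is admissible, so a uniform estimate yields $\sum_{a}w(a,N-a)\leq C_w$. Taking the supremum over $N$ gives $\|w\|_{\mathrm{ft}}\leq C_w$.

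The only step needing care is the identification $\widetilde\Omega=\Omega$, specifically checking that no point outside $\Omega$ attains an average strictly greater than $1/2$. That check is a short case analysis on which sides of a dyadic rectangle extend beyond $[0,1)$.
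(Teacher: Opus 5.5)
Your proof is correct and follows essentially the same route as the paper: the same layer-partition structure, the same incomparability observation, and the same density estimate, namely that any dyadic rectangle through a point whose first coordinate lies outside $[0,1)$ and which meets $\Omega$ has first side containing $[0,2)$. The only difference is that you use this estimate to identify the whole halo, $\widetilde\Omega=\Omega$, whereas the paper only exhibits a single witness point $(x_1,x_2)\in(I^1)^{(a+1)}\times I^2$ with $x_1\in(1,2)$ outside $\widetilde\Omega$; your hedge about $\overline\Omega$ is unnecessary, since your first observation already handles every point of $\R^2\setminus\Omega$.
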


\begin{proof}
	For each fixed $a$, the corresponding rectangles form the $a$th layer and partition
	$\Omega$. Every member of $\mathcal U_N$ has area $2^{-N}$, so distinct members cannot
	contain one another. Thus $\mathcal U_N$ is pairwise incomparable.

	If $I$ belongs to the $a$th layer, then
	\begin{equation}\label{eq:planar-layer-depths}
		e_1(I;\Omega)=a,
		\qquad e_2(I;\Omega)=N-a.
	\end{equation}
	We prove the first equality. Since $(I^1)^{(a)}=[0,1)$, we have
	$(I^1)^{(a)}\times I^2\subset\Omega\subset\widetilde\Omega$, and hence
	$e_1(I;\Omega)\geq a$. Choose
	\[
		x_1\in(1,2),
		\qquad x_2\in\operatorname{int}(I^2).
	\]
	Then $(x_1,x_2)\in(I^1)^{(a+1)}\times I^2$. If a dyadic product rectangle
	$R^1\times R^2$ through this point meets $\Omega$, then $R^1$ meets $[0,1)$ and
	contains a point of $(1,2)$. Hence $R^1\supset[0,2)$, and therefore
	\[
		\frac{|(R^1\times R^2)\cap\Omega|}{|R^1\times R^2|}
		=\frac{|R^1\cap[0,1)|}{|R^1|}
		 \frac{|R^2\cap[0,1)|}{|R^2|}
		\leq\frac12.
	\]
	A rectangle that misses $\Omega$ has density zero. Taking the supremum over all dyadic
	product rectangles through $(x_1,x_2)$ gives
	$M_{\calD^1\times\calD^2}1_\Omega(x_1,x_2)\leq1/2$. Thus
	$(I^1)^{(a+1)}\times I^2$ is not contained in $\widetilde\Omega$, proving the first
	equality in \eqref{eq:planar-layer-depths}. The second is symmetric.

	For every $x\in\Omega$ and every $0\leq a\leq N$, exactly one rectangle in the $a$th
	layer contains $x$. Combining this partition property with
	\eqref{eq:planar-layer-depths} proves \eqref{eq:layered-height-identity}; integration
	gives \eqref{eq:layered-mass-identity}. If a uniform packing estimate holds with constant
	$C_w$, then \eqref{eq:layered-mass-identity} gives
	\[
		\sum_{a=0}^Nw(a,N-a)\leq C_w,
		\qquad N\geq0.
	\]
	Taking the supremum over $N$ proves $\|w\|_{\mathrm{ft}}\leq C_w$.
\end{proof}

Taking $w(a,b)=\omega(a)$ in \eqref{eq:layered-mass-identity} gives
\begin{equation}\label{eq:layered-one-depth-identity}
	\frac1{|\Omega|}\sum_{I\in\mathcal U_N}\omega(e_1(I;\Omega))|I|
	=\sum_{a=0}^N\omega(a).
\end{equation}
Thus \eqref{eq:layered-one-depth-identity} gives the one-depth necessity
$\omega\in\ell^1(\N)$ for the same family.

For every nonzero admissible weight $w$, define
\begin{equation}\label{eq:general-marginal-coefficient}
	\kappa_w:=\frac1{160\|w\|_{\mathrm{ft}}}.
\end{equation}

\begin{thm}[Admissible two-depth bi-parameter packing]
\label{thm:planar-admissible-weight}
Let $(\Omega,\mathcal U)$ be an admissible pair in the bi-parameter setting, and let $w$ be
a nonzero admissible weight. Then
there exist a probability space $(\Theta,\mathcal A,\mathbb P)$ and, for every
$\theta\in\Theta$, a $1/8$-sparse subfamily $\mathcal G(\theta)\subset\mathcal U$ such that
\begin{equation}\label{eq:planar-general-marginals}
	\mathbb P\bigl(\{\theta\in\Theta:I\in\mathcal G(\theta)\}\bigr)
	\geq\kappa_w W_w(I;\Omega),
	\qquad I\in\mathcal U.
\end{equation}
Moreover, there is a $1/8$-sparse subfamily $\mathcal G_0\subset\mathcal U$ such that
\begin{equation}\label{eq:planar-general-sparse-packing}
	\int_\Omega H_w(x)\,\ud x
	=\sum_{I\in\mathcal U}W_w(I;\Omega)|I|
	\leq2560\|w\|_{\mathrm{ft}}|\operatorname{sh}(\mathcal G_0)|.
\end{equation}
\end{thm}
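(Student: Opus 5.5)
The plan is to derive Theorem \ref{thm:planar-admissible-weight} from Lemma \ref{lem:local-overlap-to-marginals}, applied with the normalized weight
\[
q(I):=\frac{W_w(I;\Omega)}{\|w\|_{\mathrm{ft}}}\in[0,1].
\]
This weight is at most one because $w(a,b)$ is a single term of a fixed-total sum. The constants are then forced. Taking $\eta=1/4$ and $C=4$ in that lemma gives $\eta/2=1/8$-sparse families and marginals
\[
\frac{\eta}{8(C+1)}\,q(I)=\frac{q(I)}{160},
\]
which is \eqref{eq:planar-general-marginals} with $\kappa_w$ as in \eqref{eq:general-marginal-coefficient}. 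Remark \ref{rem:marginals-to-scalar} then gives
\[
\sum_I q(I)|I|\leq\frac{4}{\kappa\eta}\,|\operatorname{sh}(\mathcal G_0)|=2560\,|\operatorname{sh}(\mathcal G_0)|,
\]
which is \eqref{eq:planar-general-sparse-packing} after multiplying by $\|w\|_{\mathrm{ft}}$. Finiteness of $|\operatorname{sh}(\mathcal U)|$ is automatic, since $\operatorname{sh}(\mathcal U)\subset\Omega$. Any slack in $\eta$ or $C$ only improves these constants.

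\textbf{Order and labels.} Order $\mathcal U$ by decreasing $\ell(I^1)$, breaking ties arbitrarily. Two rectangles with equal $\ell(I^1)$ that intersect have the same first factor, so incomparability forces them to be disjoint. Hence every intersecting pair $J\prec I$ satisfies $I^1\subsetneq J^1$ and, by incomparability, $J^2\subsetneq I^2$. Let $k$ be the generation gap $\ell(J^1)=2^k\ell(I^1)$. Call the pair \emph{far} if $k>e_1(I)$ and \emph{near} otherwise.

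\textbf{Condition (i).} Every far predecessor $J$ has $J^1\supset K:=(I^1)^{(e_1(I)+1)}$. Hence $I\cap\bigcup_{\mathrm{far}}J=I^1\times G$ with $G\subset I^2$, and $K\times G\subset\Omega$ because each such $J\subset\Omega$. If $|G|>|I^2|/2$, then $K\times I^2$ would lie in $\widetilde\Omega$, since every point of it lies in a product rectangle of density greater than $1/2$. This contradicts the definition of $e_1(I)$. So $A_I:=I\setminus\bigcup_{\mathrm{far}}J$ satisfies $|A_I|\geq\frac12|I|$, which is more than the $\eta=1/4$ we need.

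\textbf{Condition (ii).} This is the main step. Fix $J$. For a later near $I$ we have $|J\cap I|=|I^1|\,|J^2|$, so it suffices to prove
\[
\sum_{I\text{ near}} q(I)\,|I^1|\leq 4\,|J^1|.
\]
Since $k\leq e_1(I)$, monotonicity ($w\in\mathcal W_{12}$) gives $q(I)\leq w(k,e_2(I))/\|w\|_{\mathrm{ft}}$. The plan is to apply Lemma \ref{lem:fixed-total-packing} on $X=J^1$ with $P_I=I^1$, $a(I)=k$ and $b(I)$ a depth-type quantity. That lemma yields the bound $2|J^1|$.

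Hypothesis (i) of Lemma \ref{lem:exclusion-packing} holds. If $a(I)=a(I')$ and $I^1\cap I'^1\neq\varnothing$, then $I^1=I'^1$. Both second factors contain $J^2$, so they are nested, and $I,I'$ would be comparable.

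Hypothesis (ii) of Lemma \ref{lem:exclusion-packing} is the hard point, and I expect it to be the main obstacle. One must show that the near successors $K$ with $a(K)>a(I)+b(I)$ cover at most half of $I^1$. Such $K$ satisfy $K^1\subsetneq I^1$, and incomparability gives $K^2\supsetneq I^2$. The argument should then be the mirror of condition (i): the union of the $K^1$ times an enlarged $I^2$ lies in $\Omega$, so covering more than half of $I^1$ would push an enlargement of $I$ into $\widetilde\Omega$.

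The natural first choice is $b(I)=e_2(I)$. The difficulty is that the scale gap $k$ in coordinate one does not directly control how far $K^2$ enlarges $I^2$. If this fails, I would switch the roles of the coordinates. That means taking $P_I=I^2$ relative to an enlarged $J^2$, indexing by the second-coordinate gap, and using $w^\top$. This is why both monotonicities in $\mathcal W_{12}$ are assumed. In that version the far/near threshold is $e_2$, and $e_1$ plays the role of $b$.

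Either version gives a constant $2\leq4$, as required.
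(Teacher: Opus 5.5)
Your reduction to Lemma \ref{lem:local-overlap-to-marginals}, the constant bookkeeping, the ordering by $\nu_1$ (which leaves only the configuration $J^1\supsetneq I^1$, $J^2\subsetneq I^2$), and your proof of condition~(i) are all correct. The gap is condition~(ii). You flag it but do not prove it, and the route you propose does not work. With $P_I=I^1$, $a(I)=k(I)$ and $b(I)=e_2(I)$, hypothesis~(ii) of Lemma \ref{lem:exclusion-packing} fails. The reason is that a near successor $K$ with $k(K)$ much larger than $k(I)$ may have $K^2$ only one generation above $I^2$.

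For instance, in $\R\times\R$ let $I^2=(J^2)^{(M)}$ with $M\geq1$, and put $J=[0,1)\times J^2$ and $I=[0,\tfrac12)\times I^2$. Let $K=K^1\times(I^2)^{(1)}$ run over the $2^h$ dyadic intervals $K^1\subset[0,\tfrac12)$ of length $2^{-1-h}$, and let $\Omega=[0,1)\times(I^2)^{(1)}$. Then:
\begin{itemize}
\item the family is incomparable;
\item $I$ and every $K$ are near successors of $J$ in your labeling, since the enlargements $[0,1)\times I^2$ and $[0,1)\times(I^2)^{(1)}$ lie in $\Omega$;
\item $e_1(I)=e_2(I)=1$.
\end{itemize}
Yet for $h\geq2$ the sets $P_K$, all with $a(K)=1+h>a(I)+b(I)$, cover $I^1$. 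In fact hypothesis~(ii) forces $b(I)\geq h$ here. But $W_w(I)=w(1,1)>w(1,h)$ whenever $w$ is strictly decreasing in its second variable. So no choice of $b$ is compatible with the comparison $W_w(I)\leq w(k(I),b(I))$ that your scheme needs. Your fallback (swap the coordinates and use $w^\top$) is the mirror image of the same choice: the layers are still indexed by the gap that the near condition controls. It fails on the mirrored example.

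What is missing is a link between the layer index and the enlargement in the complementary coordinate. The paper supplies this by ordering $\mathcal U$ by the total generation $\tau=\nu_1+\nu_2$. Intersecting pairs then fall into two mirror configurations. In the first one ($J^1\supsetneq I^1$, $J^2\subsetneq I^2$) the order forces $p=\nu_1(I)-\nu_1(J)\geq q=\nu_2(J)-\nu_2(I)$, and near means $p\leq e_1(I)$. The layers are indexed by the \emph{complementary} gap, $a(I)=q(I)$, with $b(I)=e_2(I)$. Since every $I^2=(J^2)^{(q(I))}$ is an ancestor of $J^2$, a successor with $q(K)\geq q(I)+h$ has $K^2\supset(I^2)^{(h)}$. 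Lemma \ref{lem:half-slab-halo} then converts half-coverage of $I^1$ into $e_2(I)\geq h$. Meanwhile $e_1(I)\geq p\geq q$ gives $W_w(I)\leq w(q(I),e_2(I))$. The second configuration is handled the same way with $w^\top$.

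In your $\nu_1$-order, you could index by the analogous gap $m(I)$, defined by $I^2=(J^2)^{(m(I))}$. This also satisfies both exclusion hypotheses. But the weight comparison then needs $e_1(I)\geq m(I)$, which you only know when $m(I)\leq k(I)$.

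Your order can nevertheless be saved by a pointwise argument that avoids the exclusion lemma. Fix $x_1\in J^1$ and take finitely many near successors $I_1,\dots,I_n$ of $J$ with $x_1\in I_j^1$, listed by increasing $k$. By incomparability, $m$ is strictly increasing along this list as well. Nearness of $I_n$ gives $J^1\times I_n^2\subset\widetilde\Omega$. Hence $e_2(I_j)\geq m(I_n)-m(I_j)\geq n-j$, and also $e_1(I_j)\geq k(I_j)\geq j$. Therefore $\sum_jW_w(I_j)\leq\sum_jw(j,n-j)\leq\|w\|_{\mathrm{ft}}$, and integrating over $J^1$ gives (ii) with constant $\|w\|_{\mathrm{ft}}$. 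As written, however, your proposal contains neither argument, so its main step is missing.
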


\subsection{Ordering the rectangles and the two possible intersections}

Fix an injective numbering
$\iota\colon\calD^1\times\calD^2\to\mathbb N$. For $I\in\mathcal U$, write
\[
	\ell(I^i)=2^{-\nu_i(I)},
	\qquad
	\tau(I):=\nu_1(I)+\nu_2(I),
\]
so that
$\ell(I^1)\ell(I^2)=2^{-\tau(I)}$. Thus $\tau(I)$ is the combined dyadic generation of
the two coordinate cubes: smaller $\tau(I)$ means a larger product of their side lengths.
For distinct $I,J\in\mathcal U$, define their order as follows. If
$\tau(J)\neq\tau(I)$, put $J\prec I$ precisely when $\tau(J)<\tau(I)$. If
$\tau(J)=\tau(I)$, put $J\prec I$ precisely when $\iota(J)<\iota(I)$. Because $\iota$ is
injective, the pairs $(\tau(I),\iota(I))$ are distinct. Comparing their first entries and,
only when those agree, their second entries gives a strict total order on $\mathcal U$.
In particular, $J\prec I$ implies $\tau(J)\leq\tau(I)$.

If $J\prec I$ and $J\cap I\neq\varnothing$, the two cubes in each coordinate meet and
are therefore nested. Since $J\prec I$, the rectangles are distinct. Their pairwise
incomparability rules out both
\[
	J^1\subset I^1,\quad J^2\subset I^2,
	\qquad\text{and}\qquad
	I^1\subset J^1,\quad I^2\subset J^2,
\]
because either pair of inclusions would make one product rectangle contain the other.
Moreover, equality cannot hold in either coordinate. For example, if $J^1=I^1$, then
nesting of $J^2$ and $I^2$ would make one of the two displayed pairs hold. The same
argument applies if $J^2=I^2$. Thus the nesting is strict in both coordinates, and exactly
two opposite-direction possibilities remain:
	\begin{align}
		J^1\supsetneq I^1,\quad J^2\subsetneq I^2,
		&\qquad I\cap J=I^1\times J^2;
		\label{eq:planar-first-intersection}\\
		J^1\subsetneq I^1,\quad J^2\supsetneq I^2,
		&\qquad I\cap J=J^1\times I^2.
		\label{eq:planar-second-intersection}
	\end{align}
In the first case, put
	\[
		p=\nu_1(I)-\nu_1(J),
		\qquad q=\nu_2(J)-\nu_2(I).
	\]
Since $J\prec I$, we have $\tau(J)\leq\tau(I)$, and hence
	\[
		0\leq\tau(I)-\tau(J)=p-q,
		\qquad\text{so }p\geq q\geq1.
	\]
Call the pair near when $p\leq e_1(I;\Omega)$. In the second case, put
\[
	p=\nu_1(J)-\nu_1(I),
	\qquad
	q=\nu_2(I)-\nu_2(J).
\]
Since $J\prec I$, we have $\tau(J)\leq\tau(I)$, and hence
\[
	0\leq\tau(I)-\tau(J)=q-p,
	\qquad\text{so }q\geq p\geq1.
\]
Call the pair near when $q\leq e_2(I;\Omega)$. For every $J\prec I$ with
$J\cap I\neq\varnothing$, declare the pair near according to the preceding rule and far
otherwise. Thus $(J,I)$ near or far is oriented notation: its first entry is the earlier
rectangle and its second entry is the later rectangle. Both rules compare two numbers
measured in dyadic generations. In the first
case, $J^1=(I^1)^{(p)}$, so passing from $I^1$ to $J^1$ requires $p$ parent steps. In the
second case, $J^2=(I^2)^{(q)}$, so passing from $I^2$ to $J^2$ requires $q$ parent steps.
Thus nearness means that this number of parent steps does not exceed the corresponding
embeddedness depth of $I$. The next lemma shows that a fixed positive portion of $I$
avoids all far predecessors; the near pairs are handled by the weighted summation in the
following subsection.

The labels \emph{near} and \emph{far} are specific to the present proof. Since the
rectangles intersect, they describe scale separation rather than Euclidean distance. There
are one-depth precedents for the far part of the argument. Hyt\"onen--Martikainen
\cite{Hytonen2014}*{Section 8.1} group rectangles of embeddedness at most $k$ by one
coordinate generation modulo $k+1$, so that the relevant nested cubes are separated by
more than the available depth. Cabrelli--Lacey--Molter--Pipher
\cite{CLMP2006}*{Sections 3.1--3.2} obtain related major subsets through
essentially-disjoint and good--bad decompositions. The new two-depth step here is to retain
all near pairs and control their weighted overlap jointly in the two depths; only after
that estimate do we use Lemma \ref{lem:local-overlap-to-marginals} to produce sparse
subfamilies with a quantitative inclusion probability for every rectangle.

The density calculations below repeatedly use the following elementary observation,
formulated for an arbitrary axis-parallel rectangle basis so that it applies in both
settings.

\begin{lem}[Half-slab halo criterion]\label{lem:half-slab-halo}
	Let $\mathcal B$ be a collection of axis-parallel rectangles in $\R^d$, let $G$ be
	measurable, and set
	$\widetilde G_{\mathcal B}:=\{M_{\mathcal B}1_G>1/2\}$.
	Fix
	\[
		Q=Q^1\times Q^2\in\mathcal B,
	\]
	where $Q^1$ and $Q^2$ occupy complementary groups of coordinate variables. Suppose
	that there are measurable sets $E^2\subset Q^2$ and $L^1\subset Q^1$ such that
	\[
		|E^2|\geq\frac12|Q^2|,
		\qquad |L^1|>0,
		\qquad Q^1\times E^2\subset G,
		\qquad L^1\times Q^2\subset G,
	\]
	then $Q\subset\widetilde G_{\mathcal B}$.
	The analogous assertion with superscripts $1$ and $2$ interchanged also holds.
\end{lem}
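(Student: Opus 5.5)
The plan is to show directly that $M_{\mathcal B}1_G>1/2$ at every point $x=(x^1,x^2)\in Q$, taking $Q$ itself as the test rectangle, which is allowed since $Q\in\mathcal B$. This reduces the claim to the single density estimate
\[
	|Q\cap G|>\tfrac12|Q|.
\]
Once this holds, every $x\in Q$ satisfies $M_{\mathcal B}1_G(x)\geq|Q|^{-1}|Q\cap G|>1/2$, and hence $Q\subset\widetilde G_{\mathcal B}$.

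To get the density estimate, split $Q$ by the second group of variables into $Q^1\times E^2$ and $Q^1\times(Q^2\setminus E^2)$. The first piece lies entirely in $G$ by hypothesis, so it contributes $|Q^1|\,|E^2|\geq\frac12|Q|$ to $|Q\cap G|$. This alone only gives $\geq$, so the strict inequality has to come from the second piece.

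For the second piece, use $L^1\times Q^2\subset G$. The part $L^1\times(Q^2\setminus E^2)$ is contained in $G$ and is disjoint from $Q^1\times E^2$. Its measure is $|L^1|\,|Q^2\setminus E^2|$, which is positive whenever $|E^2|<|Q^2|$. In the remaining case $E^2=Q^2$ up to a null set, we have $Q\subset G$ up to measure zero, so $|Q\cap G|=|Q|>\frac12|Q|$. In either case $|Q\cap G|>\frac12|Q|$. The interchanged assertion follows by the identical argument with the roles of the coordinate groups swapped.

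There is essentially no obstacle. The only delicate point is the strictness of the inequality, which the positive-measure set $L^1$ supplies, together with the degenerate case $|Q^2\setminus E^2|=0$.
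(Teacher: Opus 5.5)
Your proposal is correct and is essentially the paper's proof. The paper uses the same two disjoint pieces $Q^1\times E^2$ and $L^1\times(Q^2\setminus E^2)$ of $G\cap Q$, with $Q$ itself as the test rectangle. It writes the resulting bound in one line as $\theta+(1-\theta)|L^1|/|Q^1|>1/2$ with $\theta=|E^2|/|Q^2|$, so your separate degenerate case $\theta=1$ is absorbed automatically.
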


\begin{proof}
	The sets $Q^1\times E^2$ and $L^1\times(Q^2\setminus E^2)$ are disjoint subsets of
	$G\cap Q$. Hence, writing
	$\theta=|E^2|/|Q^2|\geq1/2$, we have
	\[
		\ave{1_G}_Q
		\geq\theta+(1-\theta)\frac{|L^1|}{|Q^1|}
		>\frac12.
	\]
	Since $Q\in\mathcal B$, every $x\in Q$ satisfies
	$M_{\mathcal B}1_G(x)\geq\ave{1_G}_Q>1/2$.
	The coordinate-reversed assertion follows by the same calculation.
\end{proof}

\begin{exmp}[The same pair can be near or far]\label{ex:planar-near-far}
This is an example of the second case above:
$J^1\subsetneq I^1$ and $J^2\supsetneq I^2$, so the pair is near precisely when
$q\leq e_2(I;\Omega)$. We keep $I$ and $J$ fixed and vary only the ambient set $\Omega$.
Work with the standard dyadic grids on $\R\times\R$ and set
\[
	I=[0,1)\times[0,1),
	\qquad
	J=[0,\tfrac14)\times[0,8).
\]
Here $p=2$, $q=3$, and $\tau(J)=-1<0=\tau(I)$, so $J\prec I$. Write
\[
	Q_k:=I^1\times(I^2)^{(k)}=[0,1)\times[0,2^k).
\]
If $\Omega_{\mathrm{near}}=Q_4$, then
$Q_4\subset\widetilde\Omega_{\mathrm{near}}$ and
\[
	e_2(I;\Omega_{\mathrm{near}})\geq4>q.
\]
Thus $(J,I)$ is near. For the same pair, instead let
$\Omega_{\mathrm{far}}=I\cup J$. Apply Lemma \ref{lem:half-slab-halo} with
$\mathcal B=\calD^1\times\calD^2$, $G=\Omega_{\mathrm{far}}$, $Q=Q_1$,
$E^2=I^2$, and $L^1=J^1$. Indeed, $E^2$ occupies exactly half of
$(I^2)^{(1)}$, while the two required slabs are contained in $I$ and $J$, respectively.
Thus $Q_1\subset\widetilde\Omega_{\mathrm{far}}$. Choose
$x\in(1/2,1)\times(2,4)\subset Q_2$. If a dyadic product rectangle
$R=R^1\times R^2$ through $x$ meets $I$, then dyadic nesting forces
$R^2\supset[0,4)$. If $R$ meets $J$, then dyadic nesting forces
$R^1\supset[0,1)$. These containments give the coordinatewise estimates
\[
	\begin{aligned}
		\frac{|R\cap I|}{|R|}
		&=\frac{|R^1\cap I^1|}{|R^1|}
		  \frac{|R^2\cap I^2|}{|R^2|}
		\leq1\cdot\frac14=\frac14,\\
		\frac{|R\cap J|}{|R|}
		&=\frac{|R^1\cap J^1|}{|R^1|}
		  \frac{|R^2\cap J^2|}{|R^2|}
		\leq\frac14\cdot1=\frac14.
	\end{aligned}
\]
Consequently,
\[
	\ave{1_{\Omega_{\mathrm{far}}}}_R
	\leq\frac{|R\cap I|}{|R|}+\frac{|R\cap J|}{|R|}
	\leq\frac12.
\]
Taking the supremum over $R$ gives $x\notin\widetilde\Omega_{\mathrm{far}}$. Thus
$Q_2\not\subset\widetilde\Omega_{\mathrm{far}}$, and the nesting of the $Q_k$ gives
\[
e_2(I;\Omega_{\mathrm{far}})=1<q.
\]
Thus $(J,I)$ is far. We also note that both ambient sets can be realized as shadows of
pairwise incomparable dyadic families containing $I$ and $J$. Indeed,
$\Omega_{\mathrm{far}}=\operatorname{sh}(\{I,J\})$, while
$Q_4\setminus(I\cup J)$ has a finite partition into dyadic rectangles. Adding those
rectangles to $\{I,J\}$ gives a pairwise incomparable family with shadow
$Q_4=\Omega_{\mathrm{near}}$. Figure \ref{fig:near-far-planar} shows the two ambient sets.
Only the ambient sets and the relevant dyadic rectangles are drawn; the halos are not
shaded.
\end{exmp}

\begin{figure}[H]
	\centering
	\begin{tikzpicture}[x=1cm,y=1cm,font=\small]
		\begin{scope}
			\node[font=\small\bfseries] at (1.8,3.85) {near ambient set};
			\fill[black!6] (0,0) rectangle (3.6,3.4);
			\draw[gray,line width=.8pt] (0,0) rectangle (3.6,3.4);
			\draw[gray,densely dashed] (0,0) rectangle (3.6,2.6);
			\node[font=\scriptsize,anchor=east] at (3.53,2.46) {$Q_3$};
			\fill[zygblue!35] (0,0) rectangle (3.6,.55);
			\fill[zygorange!42] (0,0) rectangle (.9,2.6);
			\fill[zygviolet!38] (0,0) rectangle (.9,.55);
			\draw[zygblue,line width=.8pt] (0,0) rectangle (3.6,.55);
			\draw[zygorange,line width=.8pt] (0,0) rectangle (.9,2.6);
			\node[text=zygblue,font=\scriptsize\bfseries] at (2.65,.27) {$I$};
			\node[text=zygorange,font=\scriptsize\bfseries] at (.45,1.65) {$J$};
			\node[text=zygviolet,font=\tiny\bfseries] at (.45,.27) {$I\cap J$};
			\node[font=\scriptsize] at (1.8,3.15) {$\Omega_{\mathrm{near}}=Q_4$};
			\node[font=\scriptsize] at (1.8,-.35)
				{$q=3<e_2(I;\Omega_{\mathrm{near}})$};
		\end{scope}

		\begin{scope}[shift={(5.15,0)}]
			\node[font=\small\bfseries] at (1.8,3.85) {far ambient set};
			\draw[gray,densely dashed] (0,0) rectangle (3.6,2.6);
			\node[font=\scriptsize,anchor=east] at (3.53,2.46) {$Q_3$};
			\draw[zyggreen,densely dotted,line width=.9pt] (0,0) rectangle (3.6,1.18);
			\node[text=zyggreen,font=\scriptsize,anchor=east] at (3.53,1.04) {$Q_1$};
			\draw[zygred,densely dashed,line width=.9pt] (0,0) rectangle (3.6,1.85);
			\node[text=zygred,font=\scriptsize,anchor=east] at (3.53,1.71) {$Q_2$};
			\fill[zygblue!35] (0,0) rectangle (3.6,.55);
			\fill[zygorange!42] (0,0) rectangle (.9,2.6);
			\fill[zygviolet!38] (0,0) rectangle (.9,.55);
			\draw[zygblue,line width=.8pt] (0,0) rectangle (3.6,.55);
			\draw[zygorange,line width=.8pt] (0,0) rectangle (.9,2.6);
			\node[text=zygblue,font=\scriptsize\bfseries] at (2.65,.27) {$I$};
			\node[text=zygorange,font=\scriptsize\bfseries] at (.45,1.65) {$J$};
			\node[text=zygviolet,font=\tiny\bfseries] at (.45,.27) {$I\cap J$};
			\fill[zygred] (2.55,1.5) circle (2pt);
			\node[text=zygred,font=\scriptsize,above right=1pt] at (2.55,1.5) {$x$};
			\node[font=\scriptsize] at (1.8,3.15) {$\Omega_{\mathrm{far}}=I\cup J$};
			\node[font=\scriptsize] at (1.8,-.35)
				{$e_2(I;\Omega_{\mathrm{far}})=1<q=3$};
		\end{scope}
	\end{tikzpicture}
	\caption{The same incomparable pair is near for $\Omega_{\mathrm{near}}$ and far for
	$\Omega_{\mathrm{far}}$; see Example~\ref{ex:planar-near-far}. The vertical scale records
	dyadic generations schematically; in the actual geometry, $I=[0,1)^2$ is a square.}
	\label{fig:near-far-planar}
\end{figure}

\subsection{Far predecessors}

For a fixed later rectangle $I$, define
\[
	\mathcal F_1(I)
	:=\bigcup_{\substack{J\prec I:\ \eqref{eq:planar-second-intersection}\text{ holds}\\
		(J,I)\text{ far}}}J^1\subset I^1,
	\qquad
	\mathcal F_2(I)
	:=\bigcup_{\substack{J\prec I:\ \eqref{eq:planar-first-intersection}\text{ holds}\\
		(J,I)\text{ far}}}J^2\subset I^2.
\]
These sets are measurable because $\mathcal U$ is countable.

\begin{lem}[Bi-parameter far predecessors leave a major subset]\label{lem:planar-far}
	For every $I\in\mathcal U$,
	\[
		|\mathcal F_i(I)|<\frac12|I^i|,
		\qquad i=1,2.
	\]
	Consequently,
	\begin{equation*}
		A_I
		:=\big(I^1\setminus\mathcal F_1(I)\big)
		\times\big(I^2\setminus\mathcal F_2(I)\big)
	\end{equation*}
	has measure greater than $|I|/4$ and avoids every far predecessor of $I$.
\end{lem}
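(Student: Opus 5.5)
The plan is to argue by contradiction with the maximality in the definition of $e_1(I;\Omega)$, using Lemma \ref{lem:half-slab-halo}. By symmetry it suffices to treat $\mathcal F_2(I)$, that is, the far predecessors of the first type \eqref{eq:planar-first-intersection}. First I would check that $e_1(I;\Omega)<\infty$. Since $|\Omega|<\infty$, the weak-type bound for the strong dyadic maximal function gives $|\widetilde\Omega|<\infty$. The rectangles $I_1^{(k)}$ have measure $2^{kd_1}|I|\to\infty$, so they cannot all lie in $\widetilde\Omega$. Put $e:=e_1(I;\Omega)$ and $Q^1:=(I^1)^{(e+1)}$. Then maximality says exactly that $Q:=Q^1\times I^2\not\subset\widetilde\Omega$, and $Q\in\calD^1\times\calD^2$.

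Next I would locate the relevant slabs inside $\Omega$. Let $J\prec I$ satisfy \eqref{eq:planar-first-intersection} and let $(J,I)$ be far. Then $J^1=(I^1)^{(p)}$ with $p>e$, so $J^1\supset Q^1$, and $J^2\subsetneq I^2$. Since $J\subset\Omega$, this gives $Q^1\times J^2\subset\Omega$. Taking the countable union over all such $J$ yields $Q^1\times\mathcal F_2(I)\subset\Omega$. Also $I^1\times I^2=I\subset\Omega$, where $I^1\subset Q^1$ has positive measure.

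Now suppose, for contradiction, that $|\mathcal F_2(I)|\geq\frac12|I^2|$. Apply Lemma \ref{lem:half-slab-halo} with $\mathcal B=\calD^1\times\calD^2$, $G=\Omega$, $Q=Q^1\times I^2$, $E^2=\mathcal F_2(I)$ and $L^1=I^1$. The lemma gives $Q\subset\widetilde\Omega$, which contradicts the choice of $e$. Hence $|\mathcal F_2(I)|<\frac12|I^2|$. The coordinate-reversed version of the lemma handles $\mathcal F_1(I)$, via predecessors of type \eqref{eq:planar-second-intersection} whose $J^2\supset (I^2)^{(e_2(I;\Omega)+1)}$.

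Finally, $A_I$ is a product of two sets, each of measure greater than half of the corresponding factor, so $|A_I|>|I|/4$. For the avoidance claim, let $J$ be a far predecessor of $I$. If $J$ is of the first type, then $I\cap J=I^1\times J^2$ with $J^2\subset\mathcal F_2(I)$, and the second factor of $A_I$ misses $J^2$. If $J$ is of the second type, then $I\cap J=J^1\times I^2$ with $J^1\subset\mathcal F_1(I)$, and the first factor of $A_I$ misses $J^1$. In either case $A_I\cap J=\varnothing$. The only step that needs care is identifying $J^1\supset Q^1$ from the far condition $p\geq e+1$; everything else is bookkeeping.
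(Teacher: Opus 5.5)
Your proof is correct and matches the paper's argument step for step. The far condition gives $J^1\supset(I^1)^{(e_1(I;\Omega)+1)}$, hence $(I^1)^{(e_1(I;\Omega)+1)}\times\mathcal F_2(I)\subset\Omega$. Lemma \ref{lem:half-slab-halo} with $E^2=\mathcal F_2(I)$ and $L^1=I^1$ then contradicts the maximality of $e_1(I;\Omega)$, and the measure and avoidance claims follow from the product structure of $A_I$. Your extra check that $e_1(I;\Omega)<\infty$ is a harmless addition the paper leaves implicit. It should rest on an $L^p$ bound ($p>1$) or the $L\log L$ estimate for $M_{\calD^1\times\calD^2}$, since the strong maximal function is not of weak type $(1,1)$.
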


\begin{proof}
	We prove the first claim for $\mathcal F_2(I)$. Put
	$k=e_1(I;\Omega)+1$ and
	\[
		Q=(I^1)^{(k)}\times I^2.
	\]
	For every far predecessor in \eqref{eq:planar-first-intersection}, the first-coordinate
	gap is at least $k$, so $J^1\supset(I^1)^{(k)}$. If
	$y_2\in\mathcal F_2(I)$, choose such a predecessor $J$ with $y_2\in J^2$.
	Since $(I^1)^{(k)}\subset J^1$, every point of
	$(I^1)^{(k)}\times\mathcal F_2(I)$ belongs to one of these predecessors. Hence
	\[
		(I^1)^{(k)}\times\mathcal F_2(I)\subset\Omega.
	\]
	If $|\mathcal F_2(I)|\geq|I^2|/2$, apply Lemma \ref{lem:half-slab-halo} with
	$\mathcal B=\calD^1\times\calD^2$, $G=\Omega$, the rectangle $Q$ above,
	$E^2=\mathcal F_2(I)$, and $L^1=I^1$.
	The lemma gives $Q\subset\widetilde\Omega$. This contradicts the definition of
	$e_1(I;\Omega)$, and therefore
	$|\mathcal F_2(I)|<|I^2|/2$. The
	proof for $\mathcal F_1(I)$ is identical with
	$Q=I^1\times(I^2)^{(e_2(I;\Omega)+1)}$. Moreover,
	$|A_I|=|I^1\setminus\mathcal F_1(I)|\,
	|I^2\setminus\mathcal F_2(I)|>|I|/4$ by the two estimates just proved.
	Let $J\prec I$ be a far predecessor. If \eqref{eq:planar-first-intersection}
	holds, then $J^2\subset\mathcal F_2(I)$, whereas the second-coordinate factor
	of $A_I$ is $I^2\setminus\mathcal F_2(I)$; hence $A_I\cap J=\varnothing$.
	The case in which \eqref{eq:planar-second-intersection} holds is symmetric.
\end{proof}

\subsection{Near successors}\label{sec:planar-near-successors}

Lemma \ref{lem:planar-far} verifies hypothesis (i) of
Lemma \ref{lem:local-overlap-to-marginals} with $\eta=1/4$. To apply that lemma, it
remains to prove the local weighted-overlap estimate for the later near successors of each
fixed earlier rectangle. Lemma \ref{lem:fixed-total-packing} was not used in the
proof of the abstract selection lemma; its role begins here. For the rest of this proof,
write $W_w(R)$ for $W_w(R;\Omega)$ whenever $R\in\mathcal U$.

\begin{lem}[Bi-parameter local near-successor estimate]\label{lem:planar-near}
	For every nonzero admissible weight $w$ and every
	$J\in\mathcal U$,
	\begin{equation}\label{eq:planar-local-near-overlap}
		\sum_{\substack{I:\,J\prec I\\(J,I)\text{ near}}}
		W_w(I)|J\cap I|\leq4\|w\|_{\mathrm{ft}}|J|.
	\end{equation}
\end{lem}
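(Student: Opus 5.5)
For a fixed $J\in\mathcal U$, I would split the near successors $I$ of $J$ according to which of the intersection types \eqref{eq:planar-first-intersection} and \eqref{eq:planar-second-intersection} occurs. I would then prove that each of the two partial sums is at most $2\|w\|_{\mathrm{ft}}|J|$, using Lemma \ref{lem:fixed-total-packing}. The two types are symmetric under interchanging the coordinates, with $w$ replaced by $w^\top$; this is where $w\in\mathcal W_{12}$ and $\|w^\top\|_{\mathrm{ft}}=\|w\|_{\mathrm{ft}}$ enter. So I describe only the first type.

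In the first type, $J^1\supsetneq I^1$, $J^2\subsetneq I^2$ and $|J\cap I|=|I^1||J^2|$. The partial sum is therefore $|J^2|\sum_I W_w(I)|I^1|$, and it suffices to show
\[
\sum_I W_w(I)|I^1|\leq 2\|w\|_{\mathrm{ft}}|J^1|.
\]
I would apply Lemma \ref{lem:fixed-total-packing} with
\[
X:=J^1,\qquad P_I:=I^1,\qquad a(I):=q_I=\nu_2(J)-\nu_2(I),\qquad b(I):=e_2(I;\Omega).
\]
Nearness and the ordering give $q_I\leq p_I\leq e_1(I;\Omega)$. Since $w$ is nonincreasing in the first variable, $W_w(I)=w(e_1(I),e_2(I))\leq w(a(I),b(I))$. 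Hence the lemma yields $\sum_I W_w(I)|I^1|\leq 2\|w\|_{\mathrm{ft}}|J^1|$, provided its two geometric hypotheses hold.

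For hypothesis (i), fix the layer $a(I)=m$. Then $I^2=(J^2)^{(m)}$ is the same cube for every $I$ in the layer. Distinct such $I$ therefore have distinct $I^1\subset J^1$, and incomparability forbids nesting among these $I^1$. Since they are dyadic cubes, they are pairwise disjoint.

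Hypothesis (ii) is the main step, and I would derive it from the half-slab criterion. Take $K$ with $a(K)>a(I)+e_2(I)$ whose $K^1$ meets $I^1$. Then $K^2=(J^2)^{(q_K)}\supset (I^2)^{(e_2(I)+1)}$. The cubes $K^1$ and $I^1$ are nested. If $K^1\supseteq I^1$, then $K\supseteq I$, contradicting incomparability (they are distinct since $q_K\neq q_I$). So $K^1\subsetneq I^1$. Let $F$ be the union of all such $K^1$. Then
\[
F\times (I^2)^{(e_2(I)+1)}\subset\Omega.
\]
Suppose $|F|\geq|I^1|/2$. Apply Lemma \ref{lem:half-slab-halo} to $Q=I^1\times(I^2)^{(e_2(I)+1)}$ with $E^1=F$ and $L^2=I^2$; its hypothesis $I^1\times I^2=I\subset\Omega$ holds. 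The lemma gives $Q\subset\widetilde\Omega$, which contradicts the definition of $e_2(I;\Omega)$. Hence $|F|<|I^1|/2$, which is hypothesis (ii).

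Adding the two symmetric halves then gives the constant $4\|w\|_{\mathrm{ft}}$. I expect the main obstacle to be the choice of indices. Indexing by $p$ instead of $q$ fails hypothesis (ii), because a larger first-coordinate gap says nothing about second-coordinate ancestry. The choice $a=q$ resolves this: it uses $q\leq p\leq e_1$, which comes from the $\tau$-ordering, to dominate the weight, and it converts the exclusion range directly into second-coordinate ancestry.
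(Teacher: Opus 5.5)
Your proposal is correct and follows the paper's proof essentially step for step. It uses the same split into the two intersection types and the same data $X=J^1$, $P_I=I^1$, $a(I)=q(I)$, $b(I)=e_2(I;\Omega)$ in Lemma~\ref{lem:fixed-total-packing}. It also uses the same domination $W_w(I)\leq w(q(I),e_2(I;\Omega))$ via $q\leq p\leq e_1$, the same half-slab argument for hypothesis (ii), and $w^\top$ for the second type. The only cosmetic differences are that you first show $K^1\subsetneq I^1$ by incomparability, whereas the paper simply intersects the union with $P_I=I^1$, and that the paper proves the density implication for a general gap $h$ rather than only for $h=e_2(I;\Omega)+1$.
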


\begin{proof}
Fix the earlier rectangle $J$. The sum in \eqref{eq:planar-local-near-overlap} splits
according to the two coordinate configurations \eqref{eq:planar-first-intersection} and
\eqref{eq:planar-second-intersection}. We bound each part by
$2\|w\|_{\mathrm{ft}}|J|$, using Lemma
\ref{lem:fixed-total-packing} once for each configuration.

All notation that follows is local to one fixed earlier rectangle $J$ and one of the two
coordinate configurations. To state the suppressed dependence precisely, temporarily label
the configurations by $r\in\{1,2\}$, with $r=1$ corresponding to
\eqref{eq:planar-first-intersection} and $r=2$ to
\eqref{eq:planar-second-intersection}. Let $\mathcal N_r(J)$ denote the later near
successors of $J$ in configuration $r$. Fully explicit notation would then be
\[
	(P_I^{(r)})_{I\in\mathcal N_r(J)},\qquad
	p_J^{(r)}(I),\qquad q_J^{(r)}(I),\qquad
	a_J^{(r)}(I),\qquad b^{(r)}(I).
\]
The individual cube $P_I^{(r)}$ and depth $b^{(r)}(I)$ depend on $I$ and the configuration,
but not on $J$. The fixed rectangle $J$ determines which successors $I$ belong to the
indexed family, and the generation gaps $p_J^{(r)}(I),q_J^{(r)}(I)$ and
$a_J^{(r)}(I)$ also depend on $J$. We suppress $J$ and $r$ throughout, writing simply
$(P_I)_I,p(I),q(I),a(I),b(I)$. This notation is reset when either the fixed rectangle or
the coordinate configuration changes.

For $I\in\mathcal N_1(J)$, the definitions for Lemma
\ref{lem:fixed-total-packing} and the resulting geometric identities are
\[
\begin{gathered}
	p(I):=\nu_1(I)-\nu_1(J),\qquad
	q(I):=\nu_2(J)-\nu_2(I),\qquad
	P_I:=I^1\subset J^1,\\
	a(I):=q(I),\qquad b(I):=e_2(I;\Omega),\qquad
	I^2=(J^2)^{(q(I))},\qquad J\cap I=P_I\times J^2,\\
	p(I)\geq q(I)\geq1,\qquad e_1(I;\Omega)\geq p(I).
\end{gathered}
\]
For $I\in\mathcal N_2(J)$, they are
\[
\begin{gathered}
	p(I):=\nu_1(J)-\nu_1(I),\qquad
	q(I):=\nu_2(I)-\nu_2(J),\qquad
	P_I:=I^2\subset J^2,\\
	a(I):=p(I),\qquad b(I):=e_1(I;\Omega),\qquad
	I^1=(J^1)^{(p(I))},\qquad J\cap I=J^1\times P_I,\\
	q(I)\geq p(I)\geq1,\qquad e_2(I;\Omega)\geq q(I).
\end{gathered}
\]
In the last line of each display, the comparison between $p(I)$ and $q(I)$ follows
from $\tau(J)\leq\tau(I)$, the lower bound $1$ follows from strict dyadic containment,
and the embeddedness inequality is exactly the near condition.
The quantities $p(I)$ and $q(I)$ are the gaps denoted by $p$ and $q$ in the ordering
subsection, now written as functions of the varying successor $I$. In the first
configuration, nearness uses the first-coordinate gap and depth, while $a(I)$ and
$b(I)$ are the second-coordinate gap and depth. In the second configuration, nearness
uses the second-coordinate quantities, while $a(I)$ and $b(I)$ come from the first
coordinate. Thus $a(I)$ and $b(I)$ belong to the coordinate complementary to the one
used in the near condition: $a(I)$ measures the enlargement from $J$ to $I$ in that
other coordinate, whereas $b(I)$ is the embeddedness depth of the later rectangle $I$
there.

We apply Lemma \ref{lem:fixed-total-packing} separately to
$\mathcal N_1(J)$ and $\mathcal N_2(J)$, so no comparison between the two families is
needed. Within either family, $K$ denotes another successor of the same fixed $J$. For
a fixed $I$, only the two comparisons in the geometric hypotheses of Lemma
\ref{lem:exclusion-packing} matter: $a(K)=a(I)$ in hypothesis (i), and
$a(K)>a(I)+b(I)$ in hypothesis (ii).

We begin by running the argument in $\mathcal N_1(J)$. Recalling that
$a(I)=q(I)$ in this configuration, we can already verify hypothesis (i). If
$I\neq K$ and $a(I)=a(K)$, then $q(I)=q(K)$ and hence
\[
	I^2=K^2=(J^2)^{(q(I))}.
\]
If $P_I$ and $P_K$ met, dyadic nesting together with the displayed common coordinate
cube would make one full rectangle contain the other. Pairwise incomparability therefore
gives $P_I\cap P_K=\varnothing$.

This disjointness controls one value of $q$ at a time, but it does not control overlap
between different values of $q$. Hypothesis (ii) supplies the cross-level control required
by the fixed-total packing criterion. For a fixed
$I$ and every integer $h\geq1$, we will prove
\[
	\left|P_I\cap\bigcup_{a(K)\geq a(I)+h}P_K\right|
	\geq\frac12|P_I|
	\quad\Longrightarrow\quad b(I)\geq h.
\]
Taking $h=b(I)+1$ and using that $a$ is integer-valued gives hypothesis (ii), because
$a(K)\geq a(I)+b(I)+1$ is equivalent to $a(K)>a(I)+b(I)$. Lemma
\ref{lem:fixed-total-packing} will then bound the inner sum for this configuration.

We now prove the implication needed for hypothesis (ii). For $h\geq1$,
\begin{equation}\label{eq:planar-first-density-implication}
		\left|P_I\cap
		\bigcup_{\substack{K:q(K)\geq q(I)+h}}P_K\right|
		\geq\frac12|P_I|
		\quad\Longrightarrow\quad e_2(I;\Omega)\geq h.
\end{equation}
This is the half-slab density calculation in Lemma \ref{lem:half-slab-halo}.
Indeed, $K^2=(J^2)^{(q(K))}$ and $I^2=(J^2)^{(q(I))}$, so
$q(K)\geq q(I)+h$ implies $K^2\supset(I^2)^{(h)}$. Set
\[
	A:=P_I\cap\bigcup_{q(K)\geq q(I)+h}P_K,
	\qquad Q:=I^1\times(I^2)^{(h)}.
\]
If $x_1\in A$, choose $K$ with
$x_1\in P_K=K^1$. For every $x_2\in(I^2)^{(h)}$, the preceding containment gives
$(x_1,x_2)\in K\subset\Omega$. Thus
	\[
		A\times(I^2)^{(h)}\subset\Omega.
	\]
	Since $P_I=I^1$, the hypothesis in
	\eqref{eq:planar-first-density-implication} gives $|A|\geq|I^1|/2$.
	Apply the coordinate-reversed form of Lemma \ref{lem:half-slab-halo} with
	$\mathcal B=\calD^1\times\calD^2$, $G=\Omega$, the rectangle $Q$ above,
	$E^1=A$, and $L^2=I^2$. It gives $Q\subset\widetilde\Omega$, and therefore
	$e_2(I;\Omega)\geq h$.

Recalling that $b(I)=e_2(I;\Omega)$, the reduction above shows that
\eqref{eq:planar-first-density-implication} verifies hypothesis (ii). We now apply
Lemma \ref{lem:fixed-total-packing}, with weight $w$, to
\[
	X=J^1,\qquad \mathcal A=\mathcal N_1(J),\qquad
	P_I=I^1,\qquad a(I)=q(I),\qquad b(I)=e_2(I;\Omega).
\]
The index set $\mathcal N_1(J)$ is countable, and
$P_I\subset J^1=X$ by the data recorded above. Hypotheses (i) and (ii) have just been
verified. Therefore Lemma \ref{lem:fixed-total-packing} gives
	\[
		\sum_{I\in\mathcal N_1(J)}
		w(q(I),e_2(I;\Omega))|P_I|
		\leq2\|w\|_{\mathrm{ft}}|J^1|.
	\]
Using the inequalities recorded in the definition of $\mathcal N_1(J)$,
\[
	W_w(I)
	=w(e_1(I;\Omega),e_2(I;\Omega))
	\leq w(q(I),e_2(I;\Omega)).
\]
Combining this pointwise bound with the preceding packing estimate and the identity
$|J\cap I|=|P_I||J^2|$ gives
	\begin{equation}\label{eq:planar-first-near-successors}
	\begin{aligned}
		\sum_{\substack{I:J\prec I,\ J^1\supsetneq I^1\\(J,I)\text{ near}}}
		W_w(I)|J\cap I|
		&=|J^2|\sum_{I\in\mathcal N_1(J)}W_w(I)|P_I|\\
		&\leq |J^2|\sum_{I\in\mathcal N_1(J)}
		w(q(I),e_2(I;\Omega))|P_I|\\
		&\leq2\|w\|_{\mathrm{ft}}|J^2||J^1|=2\|w\|_{\mathrm{ft}}|J|.
	\end{aligned}
	\end{equation}

The configuration $\mathcal N_2(J)$ is symmetric: exchange the first and second
coordinates in the preceding argument. Apply Lemma
\ref{lem:fixed-total-packing} with weight $w^\top$,
$X=J^2$, $\mathcal A=\mathcal N_2(J)$, $P_I=I^2$, $a(I)=p(I)$ and
$b(I)=e_1(I;\Omega)$. Since $e_2(I;\Omega)\geq q(I)\geq p(I)$,
\[
	W_w(I)\leq w(e_1(I;\Omega),p(I))
	=w^\top(p(I),e_1(I;\Omega)).
\]
Since $\|w^\top\|_{\mathrm{ft}}=\|w\|_{\mathrm{ft}}$, Lemma
\ref{lem:fixed-total-packing} therefore gives
	\begin{equation}\label{eq:planar-second-near-successors}
		\sum_{\substack{I:J\prec I,\ J^1\subsetneq I^1\\(J,I)\text{ near}}}
		W_w(I)|J\cap I|\leq2\|w\|_{\mathrm{ft}}|J|.
	\end{equation}

Adding \eqref{eq:planar-first-near-successors} and
\eqref{eq:planar-second-near-successors} proves
\eqref{eq:planar-local-near-overlap}.
\end{proof}

\subsection{Completion of the two-depth packing proof}\label{sec:planar-completion}

\begin{proof}[Proof of Theorem \ref{thm:planar-admissible-weight}]
	Lemma \ref{lem:planar-far} verifies hypothesis (i) of Lemma
	\ref{lem:local-overlap-to-marginals} with $\eta=1/4$, while Lemma
	\ref{lem:planar-near} verifies hypothesis (ii) for $W_w$ with constant $4\|w\|_{\mathrm{ft}}$.
	Since $w$ is nonzero and nonnegative, $\|w\|_{\mathrm{ft}}>0$. Moreover, every
	$w(a,b)$ is one term in the fixed-total sum with $N=a+b$, and hence
	$0\leq \|w\|_{\mathrm{ft}}^{-1}W_w(I;\Omega)\leq1$. Apply Lemma
	\ref{lem:local-overlap-to-marginals} to this normalized weight. Its local near-overlap
	constant is $4$, so its inclusion-probability conclusion becomes
	\[
		\mathbb P\bigl(\{\theta\in\Theta:I\in\mathcal G(\theta)\}\bigr)
		\geq\frac{\|w\|_{\mathrm{ft}}^{-1}W_w(I;\Omega)}{32(4+1)}
		=\kappa_wW_w(I;\Omega).
	\]
	This is \eqref{eq:planar-general-marginals}.

	Since $\operatorname{sh}(\mathcal U)\subset\Omega$, Remark
	\ref{rem:marginals-to-scalar}, applied with this lower bound, directly gives a
	$1/8$-sparse subfamily $\mathcal G_0\subset\mathcal U$ such that
	\[
		\sum_{I\in\mathcal U}W_w(I;\Omega)|I|
		\leq\frac{16}{\kappa_w}|\operatorname{sh}(\mathcal G_0)|
		=2560\|w\|_{\mathrm{ft}}|\operatorname{sh}(\mathcal G_0)|,
	\]
	which completes \eqref{eq:planar-general-sparse-packing}.
\end{proof}

\subsection{Exponential overlap of sparse incomparable families}

The following is the form of Rey's exponential-overlap theorem needed here. Rey uses the
term \emph{antichain} for a pairwise incomparable family and proves the result for planar
dyadic rectangles; see \cite{Rey2026antichain}*{Theorem A and Section 3.1}. We include a
concise proof for the reader's convenience. It records the explicit dependence on the
sparseness parameter and the numerical constants used below.

\begin{lem}[Exponential overlap of a sparse incomparable family]
	\label{lem:incomparable-exponential-overlap}
	Let $\calD^1$ and $\calD^2$ be dyadic grids in $\R^{d_1}$ and $\R^{d_2}$,
	respectively. Let $\mathcal G\subset\calD^1\times\calD^2$ be a pairwise
	incomparable, $\eta$-sparse family, where $0<\eta\leq1$. Put
	\[
		h_{\mathcal G}(x):=\sum_{I\in\mathcal G}1_I(x),
		\qquad
		t_\eta:=\log\left(1+\frac\eta{32}\right).
	\]
	Then
	\begin{equation}\label{eq:incomparable-exponential-excess}
		\int_{\R^{d_1+d_2}}\bigl(e^{t_\eta h_{\mathcal G}(x)}-1\bigr)\,\ud x
		\leq\frac1{16}|\operatorname{sh}(\mathcal G)|.
	\end{equation}
\end{lem}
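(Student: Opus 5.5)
The plan is to reduce the exponential estimate to a uniform local bound for each rectangle, by telescoping along a total order. Order $\mathcal G$ as in the bi-parameter ordering subsection: by $\tau$, with ties broken by $\iota$, so that larger rectangles come earlier. For $x\in\R^{d_1+d_2}$ and $I\in\mathcal G$ with $x\in I$, let $h_{\preceq I}(x)$ be the number of $J\in\mathcal G$ with $J\preceq I$ and $x\in J$. Telescoping $e^{th}$ over the rectangles containing $x$, taken in this order, gives the pointwise identity
\[
	e^{th_{\mathcal G}(x)}-1=(1-e^{-t})\sum_{I\in\mathcal G}1_I(x)\,e^{t h_{\preceq I}(x)}.
\]
Integrating, it suffices to prove
\[
	\int_I e^{t_\eta h_{\preceq I}}\le C_0|I|
\]
with an absolute $C_0$ (say $C_0\le 2$). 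Then sparseness, $\sum_{I\in\mathcal G}|I|\le\eta^{-1}|\operatorname{sh}(\mathcal G)|$, and $1-e^{-t_\eta}\le t_\eta\le\eta/32$ give \eqref{eq:incomparable-exponential-excess}.

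Next, fix $I$ and use the intersection dichotomy \eqref{eq:planar-first-intersection}--\eqref{eq:planar-second-intersection}. This dichotomy uses only pairwise incomparability, so it holds for $\mathcal G$. Every $J\prec I$ meeting $I$ meets it in either $I^1\times J^2$ with $J^2\subsetneq I^2$, or $J^1\times I^2$ with $J^1\subsetneq I^1$. Hence on $I$,
\[
	h_{\preceq I}(x)=1+g_2(x_2)+g_1(x_1),
\]
where $g_2$ counts the first-configuration cubes $J^2\ni x_2$ and $g_1$ counts the second-configuration cubes $J^1\ni x_1$. Two first-configuration predecessors with the same $J^2$ would have nested $J^1$ and so be comparable. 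Thus $g_2$ is the overlap function of a family $\mathcal C_2$ of distinct dyadic subcubes of $I^2$, and similarly for $g_1$. Since the integral factorizes,
\[
	\int_I e^{t h_{\preceq I}}=e^{t}\Bigl(\frac{1}{|I^1|}\int_{I^1}e^{tg_1}\Bigr)\Bigl(\frac1{|I^2|}\int_{I^2}e^{tg_2}\Bigr)|I|.
\]
It therefore suffices to bound each one-parameter average by roughly $1+O(t/\eta)$.

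For the one-parameter averages I would use dyadic John--Nirenberg. If $\mathcal C\subset\mathcal D(I^2)$ satisfies the Carleson condition
\[
	\sum_{Q\in\mathcal C,\,Q\subset R}|Q|\le\Lambda|R|\qquad\text{for all dyadic }R,
\]
then the standard stopping argument gives $|R|^{-1}\int_R e^{tg}\le 1/(1-t\Lambda')$ for $t\Lambda'<1$, where $\Lambda'$ is comparable to $\Lambda$. With $\Lambda\sim\eta^{-1}$ and $t=t_\eta$ this gives the required factor, and checking the numerical constants is routine.

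The main obstacle is the Carleson condition $\Lambda\lesssim\eta^{-1}$ for $\mathcal C_2$, which must be derived from the sparseness and incomparability of $\mathcal G$. Incomparability gives a monotone structure: if $J^2\subsetneq K^2$ in $\mathcal C_2$, then $J^1\supsetneq K^1$, so smaller second factors carry larger first factors, all containing $I^1$. The difficulty is that the sparse sets $E_J$ need not lie in $I^1\times J^2$. To handle this, I would first try to fix a maximal $R=K^2\in\mathcal C_2$ and compare $\sum_{J^2\subset R}|J^2|$ with $\sum|J|/|K^1|$. The chain structure forces every $J$ with $J^2\subset R$ into the slab $J^1\times R$, and these first factors $J^1$ form a chain above $K^1$. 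I would then group these $J$ by $J^1$ and apply sparseness in each grouped slab, where the $E_J$ are disjoint and the grouped second factors are distinct. Failing that, I would instead bound $\sum_{J^2\subset R}|J|$ by $\eta^{-1}\bigl|\bigcup J\bigr|$, and control $\bigl|\bigcup J\bigr|$ through a layer-by-layer geometric decay in the chain of $J^1$'s. This decay is where I expect the real work, and possibly a loss in the constant, to lie.
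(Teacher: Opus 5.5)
Your proposal has a genuine gap at its first real step. With the order you chose ($\tau$, ties broken by $\iota$), the uniform local bound $\int_I e^{t_\eta h_{\preceq I}}\le C_0|I|$ is false. So is the Carleson bound $\Lambda\lesssim\eta^{-1}$ for $\mathcal C_2$. The telescoping identity and the factorization on $I$ are correct, but they reduce the lemma to a statement that fails.

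\emph{Counterexample.} Take $d_1=d_2=1$, fix $n\geq1$, and let $I_0:=[0,1)^2$. For every dyadic $Q\subset[0,1)$ with $|Q|=2^{-k}$, $1\le k\le n$, let $J_Q:=[0,2^{k+1})\times Q$.
\begin{itemize}
\item The family $\{I_0\}\cup\{J_Q\}$ is pairwise incomparable. Indeed, $Q\subsetneq Q'$ forces $J_Q^1\supsetneq J_{Q'}^1$, and $J_Q^1\supsetneq I_0^1$ while $J_Q^2\subsetneq I_0^2$.
\item It is $1/2$-sparse, with $E_{I_0}:=I_0$ and $E_{J_Q}:=[2^k,2^{k+1})\times Q$.
\item Since $\tau(J_Q)=-1<0=\tau(I_0)$, every $J_Q$ precedes $I_0$ whatever $\iota$ is, and all of them are in the first configuration.
\end{itemize}
So $\mathcal C_2$ consists of all dyadic $Q\subsetneq I_0^2$ of generations $1,\dots,n$, and $g_2\equiv n$ on $I_0^2$. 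The Carleson constant of $\mathcal C_2$ at $R=I_0^2$ is $n$, while $\eta^{-1}=2$. Consequently $\int_{I_0}e^{t h_{\preceq I_0}}=e^{t(n+1)}|I_0|$ is unbounded in $n$.

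The lemma is not violated. In your exact telescoping identity, the large term from $I_0$ is paid for by the mass $\sum_Q|J_Q|\approx 2^{n+2}$, whose sparse parts $E_{J_Q}$ lie far from $I_0$. This is precisely the difficulty you flagged ($E_J$ need not lie in $I^1\times J^2$), and neither proposed fix overcomes it. The first factors $[0,2^{k+1})$ grow exactly as fast as the second factors shrink, so there is no layer-by-layer decay. For instance, with $R$ a generation-one $Q_0$ and $K=J_{Q_0}$, the union $\bigcup_{J^2\subset R}J$ has measure about $2^n$, not about $|K^1||R|=2$.

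What is needed is a summed, global estimate, not a per-rectangle one. The paper obtains it through $L^2$.
\begin{itemize}
\item Put $\varphi_I=|I|^{-1/2}1_I$. Sparseness and $\|M\|_{2\to2}\le4$ for the dyadic strong maximal function give $\sum_I|\langle f,\varphi_I\rangle|^2\le(16/\eta)\|f\|_2^2$.
\item Hence the Gram matrix $\rho(I,J)=|I\cap J|/(|I||J|)^{1/2}$ has $\ell^2$ operator norm at most $16/\eta$.
\item Incomparability enters only to order every intersecting $k$-subfamily as a chain with $I_1^1\subsetneq\cdots\subsetneq I_k^1$ and $I_1^2\supsetneq\cdots\supsetneq I_k^2$. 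This gives $|\bigcap_jI_j|=|I_1|^{1/2}|I_k|^{1/2}\prod_j\rho(I_j,I_{j+1})$.
\item It follows that $\int\binom{h_{\mathcal G}}{k}\le(16/\eta)^{k-1}\sum_I|I|$.
\item Summing $\sum_{k\ge1}(e^{t}-1)^k\int\binom{h_{\mathcal G}}{k}$ with $e^{t_\eta}-1=\eta/32$ yields the constant $1/16$.
\end{itemize}
Because this argument never localizes to a single rectangle, the heavy overlap on $I_0$ in the example is automatically charged to the mass of the whole family.
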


\begin{proof}
	If $|\operatorname{sh}(\mathcal G)|=\infty$, there is nothing to prove. Thus we may
	assume that the shadow has finite measure.
	By passing to increasing finite subfamilies and using the Monotone Convergence Theorem,
	it is enough to consider a finite family $\mathcal G$. Choose pairwise disjoint measurable
	sets $E_I\subset I$ such that $|E_I|\geq\eta|I|$, and put
	$\varphi_I:=|I|^{-1/2}1_I$. For $I,J\in\mathcal G$, define their normalized overlap by
	\begin{equation*}
		\rho(I,J):=\int\varphi_I\varphi_J
		=\frac{|I\cap J|}{|I|^{1/2}|J|^{1/2}}.
	\end{equation*}
	The disjointness of the sets $E_I$ and the standard $L^2$ estimate
	$\|M_{\calD^1\times\calD^2}\|_{2\to2}\leq4$ give
	\begin{align}
		\sum_{I\in\mathcal G}|\langle f,\varphi_I\rangle|^2
		&\leq\frac1\eta\sum_{I\in\mathcal G}
		\int_{E_I}\bigl(M_{\calD^1\times\calD^2}f\bigr)^2
		\leq\frac{16}{\eta}\|f\|_2^2.
		\label{eq:normalized-indicator-l2}
	\end{align}
	For every choice of scalars $(c_J)_{J\in\mathcal G}$, an immediate duality consequence
	of \eqref{eq:normalized-indicator-l2} is
	\begin{equation}\label{eq:normalized-overlap-l2}
		c'_I:=\sum_{J\in\mathcal G}\rho(I,J)c_J,
		\qquad
		\sum_{I\in\mathcal G}|c'_I|^2
		\leq\left(\frac{16}{\eta}\right)^2
		\sum_{J\in\mathcal G}|c_J|^2.
	\end{equation}

	Now let $I_1,\ldots,I_k$ be distinct members of $\mathcal G$ such that
	$\bigcap_{j=1}^k I_j\neq\emptyset$. Dyadic nesting and incomparability show that they can
	be reordered uniquely so that
	\[
		I_1^1\subsetneq I_2^1\subsetneq\cdots\subsetneq I_k^1
		\qquad\text{and}\qquad
		I_1^2\supsetneq I_2^2\supsetneq\cdots\supsetneq I_k^2.
	\]
	Thus $\bigcap_{j=1}^kI_j=I_1^1\times I_k^2$, while
	\[
		\rho(I_j,I_{j+1})
		=\left(\frac{|I_j^1|}{|I_{j+1}^1|}\right)^{1/2}
		\left(\frac{|I_{j+1}^2|}{|I_j^2|}\right)^{1/2}.
	\]
	The two products therefore telescope to give
	\begin{equation}
		\label{eq:multiple-intersection-product}
		|I_1|^{1/2}|I_k|^{1/2}\prod_{j=1}^{k-1}\rho(I_j,I_{j+1})
		=|I_1^1||I_k^2|=\left|\bigcap_{j=1}^kI_j\right|.
	\end{equation}
	For each $x$, the quantity $\binom{h_{\mathcal G}(x)}k$ counts the $k$-element
	subfamilies of $\mathcal G$ that contain $x$. Integrating, applying
	\eqref{eq:multiple-intersection-product} to the unique ordering of every intersecting
	subfamily, and enlarging the resulting sum to all ordered $k$-tuples gives
	\begin{equation}\label{eq:all-overlap-strings}
		\int_{\R^{d_1+d_2}}\binom{h_{\mathcal G}(x)}k\,\ud x
		\leq\sum_{I_1,\ldots,I_k\in\mathcal G}
		|I_1|^{1/2}|I_k|^{1/2}\prod_{j=1}^{k-1}\rho(I_j,I_{j+1}).
	\end{equation}
	Put $v_I^{(0)}:=|I|^{1/2}$ and, recursively,
	\begin{equation*}
		v_I^{(m+1)}:=\sum_{J\in\mathcal G}\rho(I,J)v_J^{(m)},
		\qquad m\geq0.
	\end{equation*}
	The sum on the right of \eqref{eq:all-overlap-strings} equals
	$\sum_Iv_I^{(0)}v_I^{(k-1)}$. Thus Cauchy--Schwarz, repeated use of
	\eqref{eq:normalized-overlap-l2}, and sparseness yield
	\begin{equation}\label{eq:binomial-moment-bound}
		\int_{\R^{d_1+d_2}}\binom{h_{\mathcal G}(x)}k\,\ud x
		\leq\left(\frac{16}{\eta}\right)^{k-1}\sum_{I\in\mathcal G}|I|
		\leq\frac1\eta\left(\frac{16}{\eta}\right)^{k-1}
		|\operatorname{sh}(\mathcal G)|,
		\qquad k\geq1.
	\end{equation}

	Finally, put $q:=e^t-1$. The binomial identity and
	\eqref{eq:binomial-moment-bound} give, whenever $16q/\eta<1$,
	\begin{align*}
		\int_{\R^{d_1+d_2}}\bigl(e^{t h_{\mathcal G}(x)}-1\bigr)\,\ud x
		&=\sum_{k=1}^{\infty}q^k
		\int_{\R^{d_1+d_2}}\binom{h_{\mathcal G}(x)}k\,\ud x\\
		&\leq\frac q\eta\sum_{k=1}^{\infty}
		\left(\frac{16q}{\eta}\right)^{k-1}
		|\operatorname{sh}(\mathcal G)|
		=\frac{q/\eta}{1-16q/\eta}|\operatorname{sh}(\mathcal G)|.
	\end{align*}
	For $t=t_\eta$, we have $q=\eta/32$, and the last coefficient is $1/16$.
	This proves \eqref{eq:incomparable-exponential-excess}.
\end{proof}

\subsection{Sparse exponential comparison}
\label{sec:planar-exponential-comparison}

\begin{thm}[Sparse exponential comparison for an admissible weight]
\label{thm:planar-admissible-exponential}
Let $(\Omega,\mathcal U)$ be an admissible pair in the bi-parameter setting, let $w$ be a
nonzero admissible weight, and put
\[
	t_0:=\log(1+1/256).
\]
Then there is a $1/8$-sparse subfamily $\mathcal G_0\subset\mathcal U$ such that
\begin{equation}\label{eq:planar-general-sparse-exponential}
	\int_{\R^{d_1+d_2}}\bigl(e^{t_0\kappa_wH_w(x)}-1\bigr)\,\ud x
	\leq\int_{\R^{d_1+d_2}}
	\bigl(e^{t_0h_{\mathcal G_0}(x)}-1\bigr)\,\ud x
	\leq\frac1{16}|\operatorname{sh}(\mathcal G_0)|.
\end{equation}
Consequently,
\begin{equation}\label{eq:planar-general-exponential}
	\int_\Omega e^{t_0\kappa_wH_w(x)}\,\ud x\leq\frac{17}{16}|\Omega|.
\end{equation}
\end{thm}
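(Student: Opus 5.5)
The plan combines three earlier results: the random sparse families of Theorem \ref{thm:planar-admissible-weight}, the convex transfer of Lemma \ref{lem:inclusion-probabilities-to-convex-overlap}, and Rey's sparse exponential bound, Lemma \ref{lem:incomparable-exponential-overlap}.

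\emph{Step 1: random sparse families.} Theorem \ref{thm:planar-admissible-weight} gives a probability space $(\Theta,\mathcal A,\mathbb P)$ and, for every $\theta$, a $1/8$-sparse subfamily $\mathcal G(\theta)\subset\mathcal U$. Each inclusion event is measurable, and it has probability at least $\kappa_wW_w(I;\Omega)$. Every $\mathcal G(\theta)$ is a subfamily of the incomparable family $\mathcal U$, so it is itself pairwise incomparable.

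\emph{Step 2: apply Rey's bound to each outcome.} Lemma \ref{lem:incomparable-exponential-overlap} applies to each $\mathcal G(\theta)$ with $\eta=1/8$. Then $t_\eta=\log(1+1/256)=t_0$, and we get
\[
	\int\bigl(e^{t_0h_{\mathcal G(\theta)}}-1\bigr)\le\tfrac1{16}|\operatorname{sh}(\mathcal G(\theta))|\le\tfrac1{16}|\Omega|<\infty .
\]

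\emph{Step 3: convex transfer.} Apply Lemma \ref{lem:inclusion-probabilities-to-convex-overlap} with $\mu$ Lebesgue measure on $\R^{d_1+d_2}$, with $\kappa=\kappa_w$, and with the increasing convex function $\Phi(s)=e^{t_0 s}-1$, which satisfies $\Phi(0)=0$. By Step 2 the double integral is at most $|\Omega|/16<\infty$, so the final assertion of that lemma yields some $\theta_0$ with
\[
	\int\Phi(\kappa_wH_w)\le\int\Phi\bigl(h_{\mathcal G(\theta_0)}\bigr).
\]
Setting $\mathcal G_0:=\mathcal G(\theta_0)$ and using the bound from Step 2 for this particular outcome proves \eqref{eq:planar-general-sparse-exponential}.

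\emph{Step 4: the consequence \eqref{eq:planar-general-exponential}.} Since $H_w$ vanishes outside $\Omega$, the integrand $e^{t_0\kappa_wH_w}-1$ is supported in $\Omega$. Therefore
\[
	\int_\Omega e^{t_0\kappa_wH_w}=|\Omega|+\int\bigl(e^{t_0\kappa_wH_w}-1\bigr)\le|\Omega|+\tfrac1{16}|\operatorname{sh}(\mathcal G_0)|\le\tfrac{17}{16}|\Omega|,
\]
where the last step uses $\operatorname{sh}(\mathcal G_0)\subset\Omega$.

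The argument is routine bookkeeping. The only points needing care are the measurability hypotheses of the transfer lemma, which hold because $\Theta$ is the product space of Lemma \ref{lem:local-overlap-to-marginals}, and the finiteness of the double integral. The latter is exactly why the uniform bound from Lemma \ref{lem:incomparable-exponential-overlap} must be available for every $\theta$, not only on average.
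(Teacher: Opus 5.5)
Your proposal is correct and follows the paper's proof step for step. You take the random $1/8$-sparse families from Theorem~\ref{thm:planar-admissible-weight} and apply Lemma~\ref{lem:incomparable-exponential-overlap} to each of them with $\eta=1/8$, so that $t_\eta=t_0$; this gives a bound for every $\theta$, which makes the averaged integral finite. You then apply the convex transfer of Lemma~\ref{lem:inclusion-probabilities-to-convex-overlap} with $\Phi(u)=e^{t_0u}-1$ and $\kappa=\kappa_w$ to select $\theta_0$, and you finish by adding $|\Omega|$, using that $H_w$ vanishes outside $\operatorname{sh}(\mathcal U)\subset\Omega$.
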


\begin{proof}
	The proof of Theorem \ref{thm:planar-admissible-weight} produces a probability space
	$(\Theta,\mathcal A,\mathbb P)$ and $1/8$-sparse subfamilies
	$\mathcal G(\theta)\subset\mathcal U$ satisfying
	\eqref{eq:planar-general-marginals}. Abbreviate
	$h_\theta:=h_{\mathcal G(\theta)}$. The exponential-overlap lemma concerns
	$h_\theta$, not $H_w$: the weighted height enters only through Lemma
	\ref{lem:inclusion-probabilities-to-convex-overlap}.

	For every $\theta\in\Theta$, the family $\mathcal G(\theta)$ is $1/8$-sparse and remains
	pairwise incomparable because it is a subfamily of $\mathcal U$. Moreover,
	$\operatorname{sh}(\mathcal G(\theta))\subset\Omega$. Lemma
	\ref{lem:incomparable-exponential-overlap}, applied with $\eta=1/8$, therefore gives
	\[
		\int_{\R^{d_1+d_2}}\bigl(e^{t_0h_\theta(x)}-1\bigr)\,\ud x
		\leq\frac1{16}|\operatorname{sh}(\mathcal G(\theta))|
		\leq\frac1{16}|\Omega|.
	\]

	The uniform estimate shows that the averaged integral in
	\eqref{eq:abstract-convex-transfer} is finite for $\Phi(u)=e^{t_0u}-1$. Lemma
	\ref{lem:inclusion-probabilities-to-convex-overlap}, applied with
	$w(I)=W_w(I;\Omega)$ and $\kappa=\kappa_w$, provides $\theta_0\in\Theta$ such that,
	with $\mathcal G_0:=\mathcal G(\theta_0)$,
	\[
		\int\bigl(e^{t_0\kappa_wH_w}-1\bigr)\,\ud x
		\leq\int\bigl(e^{t_0h_{\mathcal G_0}}-1\bigr)\,\ud x
		\leq\frac1{16}|\operatorname{sh}(\mathcal G_0)|.
	\]
	This is \eqref{eq:planar-general-sparse-exponential}. Since $H_w=0$ outside
	$\operatorname{sh}(\mathcal U)\subset\Omega$, adding $|\Omega|$ gives
	\eqref{eq:planar-general-exponential}.
\end{proof}

\section{Two-depth packing in the Zygmund setting}\label{sec:critical-proof}

We first transfer the layered family from Proposition \ref{prop:layered-test-family} to
the Zygmund boundary, which proves that the fixed-total condition is necessary here as
well. We then prove the corresponding sufficiency, marginal and sparse conclusions of
Theorem \ref{thm:intro-zygmund-admissible}. The sufficiency proof follows the bi-parameter argument from
Section \ref{sec:planar-critical}: embeddedness removes far predecessors, Lemma
\ref{lem:fixed-total-packing} controls the near successors, and Lemma
\ref{lem:local-overlap-to-marginals} produces random sparse subfamilies. After completing
this argument, we return to the marginal probabilities and combine them with the Zygmund
maximal theorem to obtain a sparse square-root exponential comparison.

The key new point is geometric. Because the third-coordinate cubes may vary, Lemma
\ref{lem:fixed-total-packing} is now applied to the lower-dimensional dyadic rectangles
$I^1\times I^3$ and $I^2\times I^3$, rather than to individual coordinate cubes.
Subsection \ref{sec:zygmund-full-exponential} later recovers full exponential
integrability under the stronger diagonal-envelope summability condition.

\subsection{The layered test family on the Zygmund boundary}

The lift rests on the following elementary product-slice identity: on every slice
$x_3\in K^3$, the sub-Zygmund maximal function of $1_{\Omega\times K^3}$ agrees with
the bi-parameter maximal function of $1_\Omega$.

\begin{lem}[A product-slice identity]\label{lem:product-slice}
	Let $\Omega\subset\R^{d_1+d_2}$ be measurable and have finite measure, and let
	$K^3\in\calD^3$. For every $(x_1,x_2)\in\R^{d_1+d_2}$ and $x_3\in K^3$,
	\[
		M_{\calD_{\mathrm{sZ}}}1_{\Omega\times K^3}(x_1,x_2,x_3)
		=M_{\calD^1\times\calD^2}1_\Omega(x_1,x_2).
	\]
\end{lem}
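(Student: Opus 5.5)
We prove the two inequalities $\geq$ and $\leq$ separately, comparing averages over individual rectangles. Fix $(x_1,x_2)$ and $x_3\in K^3$, and write $f:=1_{\Omega\times K^3}$.

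\emph{The bound $\geq$.} Let $R^1\times R^2\in\calD^1\times\calD^2$ contain $(x_1,x_2)$. Let $R^3$ be the unique cube of $\calD^3$ that contains $x_3$, lies inside $K^3$, and satisfies $\ell(R^3)\leq\min\{\ell(K^3),\ell(R^1)\ell(R^2)\}$; for definiteness take the dyadic descendant of $K^3$ containing $x_3$ with side length equal to this minimum, or the largest admissible dyadic value below it. Then $R:=R^1\times R^2\times R^3$ belongs to $\calD_{\mathrm{sZ}}$ and contains $x$. Since $R^3\subset K^3$,
\[
	\ave{f}_R=\ave{1_\Omega}_{R^1\times R^2}.
\]
Taking the supremum over $R^1\times R^2$ gives $M_{\calD_{\mathrm{sZ}}}f(x)\geq M_{\calD^1\times\calD^2}1_\Omega(x_1,x_2)$. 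The only point to check is that dyadic side lengths make such an $R^3$ available, which holds because the side lengths in $\calD^3$ are all powers of $2$.

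\emph{The bound $\leq$.} Let $R=R^1\times R^2\times R^3\in\calD_{\mathrm{sZ}}$ contain $x$. By Fubini,
\[
	\ave{f}_R=\ave{1_\Omega}_{R^1\times R^2}\,\frac{|R^3\cap K^3|}{|R^3|}.
\]
The last factor is at most $1$, and $(x_1,x_2)\in R^1\times R^2$, so $\ave{f}_R\leq M_{\calD^1\times\calD^2}1_\Omega(x_1,x_2)$. Taking the supremum over $R$ gives the reverse inequality. Note that this direction does not use the sub-Zygmund constraint on $\ell(R^3)$ at all.

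The argument is elementary. The only step needing care is the choice of $R^3$ in the lower bound: it must be small enough to lie inside $K^3$ and to satisfy the sub-Zygmund inequality $\ell(R^3)\leq\ell(R^1)\ell(R^2)$, yet still contain $x_3$. Nestedness of $\calD^3$ guarantees a dyadic descendant of $K^3$ containing $x_3$ at every smaller dyadic scale, so such a cube always exists.
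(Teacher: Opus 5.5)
Your proof is correct and follows the paper's argument: the upper bound comes from factoring the average over $R$ with $|R^3\cap K^3|/|R^3|\leq1$, and the lower bound from choosing a small dyadic descendant $R^3\subset K^3$ through $x_3$ with $\ell(R^3)\leq\ell(R^1)\ell(R^2)$. Calling the choice of $R^3$ ``unique'' is inaccurate, since every smaller descendant of $K^3$ through $x_3$ also works, but your ``for definiteness'' clause fixes a valid choice, so the argument is unaffected.
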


\begin{proof}
	Fix $x_3\in K^3$. For every
	$Q=Q^1\times Q^2\times Q^3\in\calD_{\mathrm{sZ}}$ containing
	$(x_1,x_2,x_3)$,
	\[
		\ave{1_{\Omega\times K^3}}_Q
		=\ave{1_\Omega}_{Q^1\times Q^2}\ave{1_{K^3}}_{Q^3}
		\leq\ave{1_\Omega}_{Q^1\times Q^2}
		\leq M_{\calD^1\times\calD^2}1_\Omega(x_1,x_2).
	\]
	Taking the supremum over $Q$ gives one inequality. Conversely, for every bi-parameter
	dyadic rectangle $Q^1\times Q^2$ containing $(x_1,x_2)$, choose a sufficiently small
	dyadic cube $Q^3\subset K^3$ through $x_3$ so that
	$\ell(Q^3)\leq\ell(Q^1)\ell(Q^2)$. Then
	$Q^1\times Q^2\times Q^3\in\calD_{\mathrm{sZ}}$, and
	\[
		\ave{1_{\Omega\times K^3}}_{Q^1\times Q^2\times Q^3}
		=\ave{1_\Omega}_{Q^1\times Q^2}.
	\]
	Taking the supremum over $Q^1\times Q^2$ proves the reverse inequality.
\end{proof}

As in Proposition \ref{prop:layered-test-family}, we write the lifted example with
one-dimensional coordinate factors; the same construction works in arbitrary fixed
dimensions with dyadic cubes.

\begin{prop}[The layered Zygmund test family]
\label{prop:zygmund-layered-test-family}
Work with the standard dyadic grid on $\R^3$. For $N\geq0$, let $\Omega=[0,1)^2$ and
let $\mathcal U_N$ be the bi-parameter family in Proposition
\ref{prop:layered-test-family}. Put
\[
	K_N^3:=[0,2^{-N}),
	\qquad
	\Omega_N^Z:=\Omega\times K_N^3,
	\qquad
	\mathcal U_N^Z:=\{I\times K_N^3:I\in\mathcal U_N\}.
\]
Then $\mathcal U_N^Z\subset\calD_Z$ is finite, pairwise incomparable and contained in
$\Omega_N^Z$. Moreover, for every $w\colon\N^2\to[0,\infty)$,
\begin{equation}\label{eq:zygmund-layered-height-identity}
	H^Z_{w,N}(x)
	:=\sum_{J\in\mathcal U_N^Z}
	w(e_1(J;\Omega_N^Z),e_2(J;\Omega_N^Z))1_J(x)
	=\sum_{a=0}^Nw(a,N-a),
	\qquad x\in\Omega_N^Z,
\end{equation}
and hence
\begin{equation}\label{eq:zygmund-layered-mass-identity}
	\frac1{|\Omega_N^Z|}\sum_{J\in\mathcal U_N^Z}
	w(e_1(J;\Omega_N^Z),e_2(J;\Omega_N^Z))|J|
	=\sum_{a=0}^Nw(a,N-a).
\end{equation}
Consequently, the constant $C_w$ in any uniform Zygmund two-depth packing estimate with
weight $w$ satisfies $\|w\|_{\mathrm{ft}}\leq C_w$.
\end{prop}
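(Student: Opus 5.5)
The plan is to reduce everything to Proposition \ref{prop:layered-test-family} via the product-slice identity in Lemma \ref{lem:product-slice}.

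\emph{Membership, incomparability, containment.} For $I\in\mathcal U_N$ in layer $a$ we have $\ell(I^1)\ell(I^2)=2^{-a}2^{-(N-a)}=2^{-N}=\ell(K_N^3)$. Hence $I\times K_N^3\in\calD_Z$, and it lies in $\Omega\times K_N^3=\Omega_N^Z$. Finiteness is inherited from $\mathcal U_N$. Every member has the same third factor, so $I\times K_N^3\subset I'\times K_N^3$ forces $I\subset I'$. Incomparability therefore follows from that of $\mathcal U_N$.

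\emph{Computing the flat depths.} For $J=I\times K_N^3$, the enlargement $J_1^{(k)}=(I^1)^{(k)}\times I^2\times K_N^3$ lies in $\widetilde{\Omega_N^Z}$ if and only if $M_{\calD_{\mathrm{sZ}}}1_{\Omega\times K_N^3}>1/2$ on it. By Lemma \ref{lem:product-slice}, this maximal function at a point $(x_1,x_2,x_3)$ with $x_3\in K_N^3$ equals $M_{\calD^1\times\calD^2}1_\Omega(x_1,x_2)$. The lemma applies with $K^3=K_N^3$, since $\Omega$ has finite measure. Thus $J_1^{(k)}\subset\widetilde{\Omega_N^Z}$ exactly when $(I^1)^{(k)}\times I^2\subset\widetilde\Omega$ for the bi-parameter halo. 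The same holds for $i=2$. Consequently $e_i(J;\Omega_N^Z)=e_i(I;\Omega)$, and by \eqref{eq:planar-layer-depths} these equal $a$ and $N-a$ respectively.

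\emph{Heights and conclusion.} For $x=(x',x_3)\in\Omega_N^Z$ we have $1_J(x)=1_I(x')$. Summing over layers and using \eqref{eq:layered-height-identity} gives \eqref{eq:zygmund-layered-height-identity}. Integrating over $\Omega_N^Z$ gives \eqref{eq:zygmund-layered-mass-identity}, because $|J|=|I||K_N^3|$ and $|\Omega_N^Z|=|\Omega||K_N^3|$. Applying a uniform estimate with constant $C_w$ to the admissible pair $(\Omega_N^Z,\mathcal U_N^Z)$ yields $\sum_{a=0}^N w(a,N-a)\le C_w$ for every $N$, and taking the supremum over $N$ gives $\|w\|_{\mathrm{ft}}\le C_w$.

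\emph{Main obstacle.} The only delicate step is the depth computation. One must check that enlarging $I^1$ keeps the enlarged set inside the slab $x_3\in K_N^3$, which it does since the third factor is unchanged. One must also check that Lemma \ref{lem:product-slice} is really an identity and not just an upper bound, since both the lower bound and the upper bound on the depth are needed. Both points hold directly, so no new density estimate is required.
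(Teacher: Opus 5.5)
Your proposal is correct and follows essentially the same route as the paper. Zygmund membership and incomparability are inherited from $\mathcal U_N$ through the common third factor $K_N^3$, and Lemma~\ref{lem:product-slice} gives the exact depth transfer $e_i(J;\Omega_N^Z)=e_i(I;\Omega)$, so both identities reduce to Proposition~\ref{prop:layered-test-family}; the only cosmetic difference is that you pass through the slice relation $1_J(x)=1_I(x')$, whereas the paper simply notes that each lifted layer partitions $\Omega_N^Z$.
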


\begin{proof}
	If $I\times K_N^3\subset I'\times K_N^3$ for $I,I'\in\mathcal U_N$, then
	$I\subset I'$. Thus $\mathcal U_N^Z$ is pairwise incomparable because
	$\mathcal U_N$ is. Moreover, if $I$ belongs to the $a$th layer, then
	\[
		\ell(I^1)\ell(I^2)
		=2^{-N}
		=\ell(K_N^3),
	\]
	so $I\times K_N^3\in\calD_Z$.

	Let $J=I\times K_N^3\in\mathcal U_N^Z$. Here
	\[
		\widetilde{\Omega_N^Z}
		:=\{M_{\calD_{\mathrm{sZ}}}1_{\Omega_N^Z}>1/2\}
		\quad\text{and}\quad
		\widetilde\Omega
		:=\{M_{\calD^1\times\calD^2}1_\Omega>1/2\}
	\]
	are, respectively, the sub-Zygmund and bi-parameter halos. For $i=1,2$, Lemma
	\ref{lem:product-slice} gives
	\[
		J_i^{(k)}=I_i^{(k)}\times K_N^3,
		\qquad
		J_i^{(k)}\subset\widetilde{\Omega_N^Z}
		\quad\Longleftrightarrow\quad
		I_i^{(k)}\subset\widetilde\Omega.
	\]
	Taking the largest admissible $k$ yields
	\[
		e_i(J;\Omega_N^Z)=e_i(I;\Omega),
		\qquad i=1,2.
	\]
	Thus, if $I$ belongs to the $a$th layer, Proposition
	\ref{prop:layered-test-family} gives
	\begin{equation}\label{eq:zygmund-layer-depths}
		e_1(J;\Omega_N^Z)=a,
		\qquad e_2(J;\Omega_N^Z)=N-a.
	\end{equation}
	Every lifted layer partitions $\Omega_N^Z$. Combining this fact with
	\eqref{eq:zygmund-layer-depths} proves \eqref{eq:zygmund-layered-height-identity};
	integration gives \eqref{eq:zygmund-layered-mass-identity}. Taking the supremum over
	$N$ proves the final assertion.
\end{proof}

The proposition proves the necessity in Theorem
\ref{thm:intro-zygmund-admissible}. We now turn to the converse and its sparse refinement.
We use $W_w,H_w$ from the introduction, now with the Zygmund depths, and $\kappa_w$ from
\eqref{eq:general-marginal-coefficient}.

\begin{thm}[Admissible two-depth Zygmund packing]
\label{thm:zygmund-admissible-weight}
Let $(\Omega,\mathcal U)$ be an admissible pair in the Zygmund setting, and let $w$ be a
nonzero admissible weight. Then
there exist a probability space $(\Theta,\mathcal A,\mathbb P)$ and, for every
$\theta\in\Theta$, a $1/8$-sparse subfamily $\mathcal G(\theta)\subset\mathcal U$ such that
\begin{equation}\label{eq:zygmund-general-marginals}
	\mathbb P\bigl(\{\theta\in\Theta:I\in\mathcal G(\theta)\}\bigr)
	\geq\kappa_w W_w(I;\Omega),
	\qquad I\in\mathcal U.
\end{equation}
Moreover, there is a $1/8$-sparse subfamily $\mathcal G_0\subset\mathcal U$ such that
\begin{equation}\label{eq:zygmund-general-sparse-packing}
	\int_\Omega H_w(x)\,\ud x
	=\sum_{I\in\mathcal U}W_w(I;\Omega)|I|
	\leq2560\|w\|_{\mathrm{ft}}|\operatorname{sh}(\mathcal G_0)|.
\end{equation}
\end{thm}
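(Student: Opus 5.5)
The proof follows the bi-parameter template of Theorem \ref{thm:planar-admissible-weight} step by step. We order $\mathcal U$ by $\tau(I):=\nu_1(I)+\nu_2(I)$, where $\ell(I^i)=2^{-\nu_i(I)}$ for $i=1,2$, and break ties with an injective numbering. Since $\ell(I^3)=2^{-\tau(I)}$ on $\calD_Z$, this is the same as ordering by the third-coordinate generation.

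Let $J\prec I$ intersect. All three coordinate cubes are nested, and $J^3\supseteq I^3$. Incomparability then forces the same two opposite configurations in the first two coordinates as in \eqref{eq:planar-first-intersection}--\eqref{eq:planar-second-intersection}, with strict nesting. Suppose equality held in coordinate 1, say $J^1=I^1$. Then $J^2$ and $I^2$ are nested, and since $J^3\supseteq I^3$ one rectangle would contain the other; the Zygmund relation only constrains the sizes. Write $p,q$ as before, so that $p-q=\tau(I)-\tau(J)\geq0$ in the first configuration. The pair is \emph{near} when $p\leq e_1(I;\Omega)$ in the first configuration, and when $q\leq e_2(I;\Omega)$ in the second.

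\emph{Far predecessors.} Let $J$ be a far predecessor in the first configuration. Then $J^1\supset(I^1)^{(k)}$ with $k=e_1(I;\Omega)+1$. The intersection is $I^1\times J^2\times I^3$, because $J^3\supseteq I^3$.

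I expect the main obstacle is that the relevant projection is no longer $J^2$ alone. Instead, define $\mathcal F_{23}(I)\subset I^2\times I^3$ as the union of the sets $(J^2\cap I^2)\times I^3$ over far $J$, that is, $J^2\times I^3$. Then $(I^1)^{(k)}\times\mathcal F_{23}(I)\subset\Omega$. Take $Q=(I^1)^{(k)}\times I^2\times I^3$. This is sub-Zygmund because $\ell(I^3)\leq\ell((I^1)^{(k)})\ell(I^2)$, so Lemma \ref{lem:half-slab-halo} applies with the two groups $(1)$ and $(2,3)$, with $E=\mathcal F_{23}(I)$ and $L^1=I^1$. It forces $|\mathcal F_{23}(I)|<\frac12|I^2\times I^3|$.

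However, the far sets from the two configurations now both constrain coordinate $3$, so the product construction $A_I$ of Lemma \ref{lem:planar-far} does not directly apply. The far sets of the first configuration have the form $I^1\times F_{23}$, and those of the second configuration have the form $F_{13}\times I^2$, where $F_{13}=\bigcup J^1\times I^3$. Since each $F$ is a product with the full $I^3$, it reduces to $F_2\times I^3$ and $F_1\times I^3$ respectively. Hence $A_I:=(I^1\setminus F_1)\times(I^2\setminus F_2)\times I^3$ still has measure greater than $|I|/4$, and hypothesis (i) of Lemma \ref{lem:local-overlap-to-marginals} holds with $\eta=1/4$.

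\emph{Near successors.} Fix $J$ and take the first configuration. Now $J\cap I=I^1\times J^2\times I^3$, and $I^3\subset J^3$ varies with $I$. Following the paper's hint, we apply Lemma \ref{lem:fixed-total-packing} with $X=J^1\times J^3$, $P_I=I^1\times I^3$, $a(I)=q(I)$ and $b(I)=e_2(I;\Omega)$. Then $|J\cap I|=|P_I||J^2|$.

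For hypothesis (i), suppose $q(I)=q(K)$. Then $I^2=K^2$, and if $P_I$ and $P_K$ met they would be nested rectangles, so one of $I,K$ would contain the other, contradicting incomparability. Hence the $P_I$ with a common value of $a$ are disjoint.

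For hypothesis (ii), suppose $q(K)\geq q(I)+h$. Then $K^2\supset(I^2)^{(h)}$. Set $A:=P_I\cap\bigcup P_K$, a subset of $I^1\times I^3$. Then $A\times(I^2)^{(h)}\subset\Omega$, up to reordering coordinates. Take $Q=I^1\times(I^2)^{(h)}\times I^3\in\calD_{\mathrm{sZ}}$, which gives exactly the enlargement defining $e_2$. Lemma \ref{lem:half-slab-halo} with groups $(1,3)$ and $(2)$, $E=A$ and $L^2=I^2$ shows that $|A|\geq\frac12|P_I|$ implies $e_2(I;\Omega)\geq h$.

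Since $e_1(I;\Omega)\geq p\geq q$, we have $W_w(I)\leq w(q(I),e_2(I;\Omega))$. Lemma \ref{lem:fixed-total-packing} then bounds this part of the sum by $2\|w\|_{\mathrm{ft}}|J|$. The second configuration is symmetric, using $w^\top$ and the pieces $I^2\times I^3$.

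\emph{Conclusion.} We apply Lemma \ref{lem:local-overlap-to-marginals} to the normalized weight $\|w\|_{\mathrm{ft}}^{-1}W_w$, with $\eta=1/4$ and $C=4$. This gives $1/8$-sparse families with marginals at least $\kappa_wW_w$. Remark \ref{rem:marginals-to-scalar} then yields \eqref{eq:zygmund-general-sparse-packing} with constant $2560\|w\|_{\mathrm{ft}}$, exactly as in the bi-parameter completion.
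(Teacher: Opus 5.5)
This is essentially the paper's proof. You use the same third-coordinate order and the same near/far labels. Your major subset is the same product set $A_I=(I^1\setminus\mathcal F_1(I))\times(I^2\setminus\mathcal F_2(I))\times I^3$, obtained from the half-slab lemma in $\calD_{\mathrm{sZ}}$. You apply the fixed-total criterion in the same way, to $P_I=I^1\times I^3\subset J^1\times J^3$ with $a(I)=q(I)$ and $b(I)=e_2(I;\Omega)$. The constants also agree: $\eta=1/4$, $C=4$, and hence $2560$.

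One step is justified by a reason that is not valid as stated. In checking hypothesis (i) of the exclusion-packing lemma, you say that if $P_I$ and $P_K$ meet they ``would be nested rectangles.'' Two meeting dyadic rectangles $I^1\times I^3$ and $K^1\times K^3$ need not be nested: they can cross, with $I^1\subsetneq K^1$ but $I^3\supsetneq K^3$. Likewise, general tri-parameter rectangles with a common second factor can meet without either containing the other. What rescues the step is the Zygmund relation. Since $I^2=K^2$, it gives $\ell(I^3)/\ell(K^3)=\ell(I^1)/\ell(K^1)$. So $I^1\subset K^1$ forces $\ell(I^3)\leq\ell(K^3)$, hence $I^3\subset K^3$ and $I\subset K$. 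This is precisely the Zygmund-specific input the paper uses here, and it should be stated.

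A similar imprecision occurs when you classify intersecting pairs. Incomparability alone does not exclude the case $J^1\subsetneq I^1$, $J^2\subsetneq I^2$ (with $J^3\supseteq I^3$). That case is excluded by the order, equivalently by the Zygmund relation together with $J^3\supseteq I^3$. The same input orients the equality case $J^1=I^1$ to $J^2\supseteq I^2$. With these two points made explicit, your argument matches the paper's proof.
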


\subsection{Third-coordinate order and the two intersections}

Since $\calD_Z$ is countable, so is $\mathcal U$. Fix an injective numbering
$\iota\colon\calD_Z\to\mathbb N$. For $I\in\mathcal U$, write
$\ell(I^i)=2^{-\nu_i(I)}$, $i=1,2,3$. On the Zygmund boundary,
$\nu_3(I)=\nu_1(I)+\nu_2(I)$, so $\nu_3(I)$ is precisely the combined generation used in
the bi-parameter proof. For distinct $I,J\in\mathcal U$, define their order as follows. If
$\nu_3(J)\neq\nu_3(I)$, put $J\prec I$ precisely when $\nu_3(J)<\nu_3(I)$. If
$\nu_3(J)=\nu_3(I)$, put $J\prec I$ precisely when $\iota(J)<\iota(I)$. As in the
bi-parameter proof, this defines a strict total order on $\mathcal U$. By construction,
\[
	J\prec I\quad\Longrightarrow\quad \nu_3(J)\leq\nu_3(I).
\]
Since $\nu_3=\nu_1+\nu_2$, this also gives the equivalent comparisons
\begin{equation}\label{eq:zygmund-order-gap-comparisons}
	\nu_1(I)-\nu_1(J)\geq\nu_2(J)-\nu_2(I),
	\qquad
	\nu_2(I)-\nu_2(J)\geq\nu_1(J)-\nu_1(I).
\end{equation}
If also $J\cap I\neq\varnothing$, the meeting dyadic cubes $J^3$ and $I^3$ are nested, and
the inequality $\nu_3(J)\leq\nu_3(I)$ gives $J^3\supset I^3$.

\begin{lem}[The two possible intersections]\label{lem:two-intersections}
	Suppose that $J\prec I$ and $J\cap I\neq\varnothing$. Exactly one of the following holds:
	\begin{align}
		J^1\supsetneq I^1,
		 & \qquad J^2\subsetneq I^2,
		 & I\cap J                   & =I^1\times J^2\times I^3;
		\label{eq:first-intersection}                            \\
		J^1\subsetneq I^1,
		 & \qquad J^2\supsetneq I^2,
		 & I\cap J                   & =J^1\times I^2\times I^3.
		\label{eq:second-intersection}
	\end{align}
\end{lem}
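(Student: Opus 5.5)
We argue in three steps: the third coordinate first, then nesting in the first two coordinates, then incomparability to force opposite directions. The underlying mechanism is the bi-parameter case analysis that precedes \eqref{eq:planar-first-intersection}. The Zygmund constraint $\nu_3=\nu_1+\nu_2$ together with the order is what excludes the degenerate cases.

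Since $J\cap I\neq\varnothing$, in each coordinate $i=1,2,3$ the dyadic cubes $J^i$ and $I^i$ meet, hence are nested. We already noted that $\nu_3(J)\leq\nu_3(I)$ gives $J^3\supset I^3$. Incomparability forbids $J^1\supset I^1$ and $J^2\supset I^2$ holding simultaneously, since together with $J^3\supset I^3$ this would give $I\subset J$, and $I\neq J$.

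For the remaining configurations we use the scale constraint instead of incomparability. Suppose that $J^1\subset I^1$ and $J^2\subset I^2$. Then $\nu_1(J)\geq\nu_1(I)$ and $\nu_2(J)\geq\nu_2(I)$, so $\nu_3(J)\geq\nu_3(I)$. Combined with $\nu_3(J)\leq\nu_3(I)$, all these generations are equal. Hence $J^1=I^1$ and $J^2=I^2$, and since the third cubes are nested with equal size, also $J^3=I^3$. Thus $I=J$, contradicting distinctness.

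It remains to rule out equality in a single coordinate. Suppose $J^1=I^1$. Nesting of $J^2$ and $I^2$ then puts us in one of the two forbidden configurations already excluded: either $J^2\supset I^2$ (both coordinates contain) or $J^2\subset I^2$ (both coordinates contained). The case $J^2=I^2$ is handled symmetrically. Therefore both nestings are strict and point in opposite directions, which gives exactly the two alternatives \eqref{eq:first-intersection} and \eqref{eq:second-intersection}. These alternatives are mutually exclusive because the inclusions in them are strict.

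The intersection formulas follow coordinatewise. The intersection of two nested cubes is the smaller cube, and in the third coordinate this is $I^3$, since $I^3\subset J^3$.

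The only delicate point is that a pair of rectangles that is inner in both coordinates is not excluded by incomparability alone, because $J^3$ could be larger than $I^3$. This is exactly where the order by $\nu_3$ and the Zygmund relation $\nu_3=\nu_1+\nu_2$ must be used, as in the argument above.
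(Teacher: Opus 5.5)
Your proof is correct and is essentially the paper's argument. It uses the same ingredients: coordinatewise nesting, $J^3\supset I^3$ from the order, incomparability to exclude $J^1\supset I^1,\ J^2\supset I^2$, and $\nu_3=\nu_1+\nu_2$ with $\nu_3(J)\leq\nu_3(I)$ to exclude $J^1\subset I^1,\ J^2\subset I^2$. The only difference is the order of the steps: the paper rules out coordinate equalities first via \eqref{eq:zygmund-order-gap-comparisons}, whereas you exclude the non-strict same-direction cases first and then deduce strictness.

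One small correction to your closing remark. On the Zygmund boundary, $J^1\subset I^1$ and $J^2\subset I^2$ already force $\ell(J^3)\leq\ell(I^3)$, hence $J^3\subset I^3$. So that case is excluded by incomparability and the Zygmund relation alone. The order is genuinely needed only to identify the third factor of $I\cap J$ as $I^3$.
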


\begin{proof}
	As already noted, meeting dyadic cubes are nested in every coordinate, and the order gives
	$J^3\supset I^3$.
	We first rule out equality in the first two coordinates. If $J^1=I^1$, the first
	comparison in \eqref{eq:zygmund-order-gap-comparisons} gives
	$\nu_2(J)\leq\nu_2(I)$. Since $J^2$ and $I^2$ meet, this means $J^2\supset I^2$, and
	hence $J\supset I$, contrary to pairwise incomparability. If $J^2=I^2$, the second
	comparison gives $J^1\supset I^1$ and leads to the same contradiction.

	The first two coordinate inclusions are therefore strict. The possibility
	$J^1\supsetneq I^1$ and $J^2\supsetneq I^2$ would give $J\supset I$, contrary to
	incomparability. The other
	same-direction possibility, $J^1\subsetneq I^1$ and $J^2\subsetneq I^2$, would make
	the left side of the first comparison in \eqref{eq:zygmund-order-gap-comparisons}
	negative and its right side positive, which is impossible.
\end{proof}

For every $J\prec I$ with $J\cap I\neq\varnothing$, Lemma
\ref{lem:two-intersections} places the pair in exactly one of the two configurations above.
In the first configuration, call the pair \emph{near} when its positive first-coordinate
generation gap satisfies
\begin{equation}\label{eq:first-near}
	\nu_1(I)-\nu_1(J)\leq e_1(I;\Omega).
\end{equation}
In the second configuration, call it near when its positive second-coordinate generation
gap satisfies
\begin{equation*}
	\nu_2(I)-\nu_2(J)\leq e_2(I;\Omega).
\end{equation*}
An intersecting ordered pair that is not near is \emph{far}. As before, $(J,I)$ near or far is
oriented notation: $J\prec I$, and nearness is measured using the depth of the second,
later rectangle $I$.

\subsection{Far predecessors}

For a fixed later rectangle $I$, define the far predecessor sets
\[
	\mathcal F_1(I)
	:=\bigcup_{\substack{J\prec I:\ \eqref{eq:second-intersection}\text{ holds}\\(J,I)\text{ far}}}J^1
	\subset I^1,
	\qquad
	\mathcal F_2(I)
	:=\bigcup_{\substack{J\prec I:\ \eqref{eq:first-intersection}\text{ holds}\\(J,I)\text{ far}}}J^2
	\subset I^2.
\]
These sets are measurable because $\mathcal U$ is countable.

\begin{lem}[Zygmund far predecessors leave a major subset]\label{lem:large-gap-predecessors}
	For every $I\in\mathcal U$,
	\begin{equation}\label{eq:far-half}
		|\mathcal F_i(I)|<\frac12|I^i|,
		\qquad i=1,2.
	\end{equation}
	Consequently,
	\begin{equation}\label{eq:far-product-subset}
		A_I:=\big(I^1\setminus\mathcal F_1(I)\big)
		\times\big(I^2\setminus\mathcal F_2(I)\big)\times I^3
	\end{equation}
	has measure greater than $|I|/4$ and avoids every far predecessor of $I$.
\end{lem}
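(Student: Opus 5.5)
The plan follows the proof of Lemma \ref{lem:planar-far}, with the third coordinate carried along. We treat $\mathcal F_2(I)$; the estimate for $\mathcal F_1(I)$ is symmetric, exchanging the first two coordinates and using $e_2$. Put $k:=e_1(I;\Omega)+1$ and consider the sub-Zygmund rectangle
\[
	Q:=(I^1)^{(k)}\times I^2\times I^3 .
\]
It lies in $\calD_{\mathrm{sZ}}$, since enlarging $I^1$ only increases $\ell(Q^1)\ell(Q^2)$ while $\ell(Q^3)=\ell(I^3)$. This $Q$ is exactly $I_1^{(k)}$, so the definition of $e_1$ forces $Q\not\subset\widetilde\Omega$.

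Every far predecessor $J$ in configuration \eqref{eq:first-intersection} has first-coordinate gap $\nu_1(I)-\nu_1(J)>e_1(I;\Omega)$, hence $\nu_1(I)-\nu_1(J)\ge k$ and $J^1\supset(I^1)^{(k)}$. Moreover, $J\prec I$ together with $J\cap I\neq\varnothing$ gives $J^3\supset I^3$. Now take $y_2\in\mathcal F_2(I)$ and choose such a $J$ with $y_2\in J^2$. Then the whole fibre $(I^1)^{(k)}\times\{y_2\}\times I^3$ lies in $J\subset\Omega$. Taking the union over such $y_2$ gives
\[
	(I^1)^{(k)}\times\mathcal F_2(I)\times I^3\subset\Omega .
\]

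Suppose, for contradiction, that $|\mathcal F_2(I)|\ge|I^2|/2$. We apply Lemma \ref{lem:half-slab-halo} with $\mathcal B=\calD_{\mathrm{sZ}}$ and $G=\Omega$. The coordinate groups are $Q^1:=(I^1)^{(k)}$ and $Q^2:=I^2\times I^3$, with $E^2:=\mathcal F_2(I)\times I^3$, which occupies at least half of $Q^2$, and $L^1:=I^1$, using $I\subset\Omega$ for the slab $L^1\times Q^2$. The lemma gives $Q\subset\widetilde\Omega$, which contradicts the first paragraph. Hence $|\mathcal F_2(I)|<|I^2|/2$. Note that $\mathcal F_i(I)\subset I^i$ holds because predecessors in the relevant configuration have the smaller cube in that coordinate.

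Given the two estimates, the product structure of $A_I$ gives
\[
	|A_I|>\tfrac12|I^1|\cdot\tfrac12|I^2|\cdot|I^3|=\tfrac14|I| .
\]
For the avoidance claim, let $J$ be a far predecessor. If \eqref{eq:first-intersection} holds, then $J^2\subset\mathcal F_2(I)$, so the second factor of $A_I$ misses $J^2$ and $A_I\cap J=\varnothing$. The case \eqref{eq:second-intersection} is symmetric, using $\mathcal F_1(I)$. The only point needing care is checking that the rectangles used in the halo argument belong to the sub-Zygmund basis and that the predecessors contain the full third-coordinate factor $I^3$; both follow from the order and Lemma \ref{lem:two-intersections}.
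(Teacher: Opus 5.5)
Your proof is correct and follows the paper's argument step for step. It uses the same test rectangle $Q=I_1^{(e_1(I;\Omega)+1)}$, the same slab inclusion $(I^1)^{(k)}\times\mathcal F_2(I)\times I^3\subset\Omega$ obtained from $J^1\supset(I^1)^{(k)}$ and $J^3\supset I^3$, the same application of Lemma~\ref{lem:half-slab-halo} with $E^2=\mathcal F_2(I)\times I^3$ and $L^1=I^1$, and the same product-measure and avoidance arguments.
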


\begin{proof}
	We prove the assertion for $\mathcal F_2(I)$. Put $k=e_1(I;\Omega)+1$ and
	\[
		Q=(I^1)^{(k)}\times I^2\times I^3=I_1^{(k)}\in\calD_{\mathrm{sZ}}.
	\]
	For every far pair satisfying \eqref{eq:first-intersection},
	$\nu_1(I)-\nu_1(J)\geq k$, so $J^1\supset(I^1)^{(k)}$. Suppose, toward a
	contradiction, that $|\mathcal F_2(I)|\geq|I^2|/2$.
	Let
	$x_2\in\mathcal F_2(I)$ and choose a corresponding far predecessor $J$ with
	$x_2\in J^2$. We have $(I^1)^{(k)}\subset J^1$, while $I^3\subset J^3$ by the
	third-coordinate order. Since $J\subset\Omega$, this proves
	\[
		(I^1)^{(k)}\times\mathcal F_2(I)\times I^3\subset\Omega.
	\]
	Apply Lemma \ref{lem:half-slab-halo} with $\mathcal B=\calD_{\mathrm{sZ}}$,
	$G=\Omega$, the rectangle $Q$ above,
	$E^2=\mathcal F_2(I)\times I^3$, and $L^1=I^1$. The assumed measure bound and
	$I\subset\Omega$ verify its remaining hypotheses, so $Q\subset\widetilde\Omega$.
	This contradicts the
	definition of $e_1(I;\Omega)$ because $Q=I_1^{(e_1(I;\Omega)+1)}$, and proves
	$|\mathcal F_2(I)|<|I^2|/2$. The estimate
	$|\mathcal F_1(I)|<|I^1|/2$ follows by the same argument.
	By \eqref{eq:far-half}, the measure of the set in
	\eqref{eq:far-product-subset} is
	$|I^1\setminus\mathcal F_1(I)|\,
	|I^2\setminus\mathcal F_2(I)|\,|I^3|>|I|/4$.
	If a far predecessor $J\prec I$ satisfies \eqref{eq:first-intersection}, then
	\[
		J\cap I=I^1\times J^2\times I^3,
		\qquad J^2\subset\mathcal F_2(I).
	\]
	Hence it is disjoint from $A_I$, whose second
	factor is $I^2\setminus\mathcal F_2(I)$. If instead $J$ satisfies
	\eqref{eq:second-intersection}, then
	\[
		J\cap I=J^1\times I^2\times I^3,
		\qquad J^1\subset\mathcal F_1(I),
	\]
	and the first factor of $A_I$ makes the two sets disjoint. Thus $A_I$ avoids every far
	predecessor.
\end{proof}

\subsection{Near successors}\label{sec:zygmund-near-successors}

Lemma \ref{lem:large-gap-predecessors} verifies hypothesis (i) of Lemma
\ref{lem:local-overlap-to-marginals} with $\eta=1/4$. To apply that lemma, it remains to
prove the local weighted-overlap estimate for the later near successors of each fixed
earlier rectangle. As in the bi-parameter proof, this is where Lemma
\ref{lem:fixed-total-packing} enters. For the rest of this proof, write $W_w(R)$
for $W_w(R;\Omega)$ whenever $R\in\mathcal U$.

\begin{lem}[Zygmund local near-successor estimate]\label{lem:zygmund-near}
	For every nonzero admissible weight $w$ and every
	$J\in\mathcal U$,
	\begin{equation}\label{eq:zygmund-local-near-overlap}
		\sum_{\substack{I:\,J\prec I\\(J,I)\text{ near}}}
		W_w(I)|J\cap I|\leq4\|w\|_{\mathrm{ft}}|J|.
	\end{equation}
\end{lem}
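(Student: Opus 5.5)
We follow the proof of Lemma \ref{lem:planar-near}, with one change dictated by the Zygmund geometry. The third-coordinate cubes of the successors now vary. We therefore apply Lemma \ref{lem:fixed-total-packing} to the lower-dimensional sets $I^1\times I^3$ or $I^2\times I^3$ rather than to single coordinate cubes.

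Fix $J$ and split the near successors into $\mathcal N_1(J)$ and $\mathcal N_2(J)$ according to Lemma \ref{lem:two-intersections}. In $\mathcal N_1(J)$ we set
\[
	p(I)=\nu_1(I)-\nu_1(J),\qquad q(I)=\nu_2(J)-\nu_2(I).
\]
By \eqref{eq:zygmund-order-gap-comparisons} and nearness these satisfy $e_1(I;\Omega)\geq p(I)\geq q(I)\geq1$. We then choose
\[
	X=J^1\times J^3,\quad P_I=I^1\times I^3\subset X,\quad a(I)=q(I),\quad b(I)=e_2(I;\Omega).
\]
Since $J\cap I=I^1\times J^2\times I^3$, we have $|J\cap I|=|P_I||J^2|$. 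Monotonicity gives $W_w(I)\leq w(q(I),e_2(I;\Omega))$, so Lemma \ref{lem:fixed-total-packing} would yield the bound $2\|w\|_{\mathrm{ft}}|J^1||J^3||J^2|=2\|w\|_{\mathrm{ft}}|J|$. The configuration $\mathcal N_2(J)$ is handled symmetrically with $w^\top$, $P_I=I^2\times I^3$, $a(I)=p(I)$ and $b(I)=e_1(I;\Omega)$. Adding the two bounds gives \eqref{eq:zygmund-local-near-overlap}.

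Hypothesis (i) of Lemma \ref{lem:exclusion-packing}: if $q(I)=q(K)$, then $I^2=K^2=(J^2)^{(q)}$. Hence $\nu_1(I)=\nu_3(I)-\nu_2(I)$ and $\nu_1(K)$ are related through $\nu_3$. Suppose $P_I\cap P_K\neq\varnothing$. Then $I^1,K^1$ are nested and $I^3,K^3$ are nested. The Zygmund relation $\nu_3=\nu_1+\nu_2$ with the common $\nu_2$ forces the nesting in coordinates $1$ and $3$ to go in the same direction. Thus one rectangle contains the other, contradicting incomparability. This same-direction forcing is exactly where the Zygmund boundary (rather than the sub-Zygmund basis) is used.

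Hypothesis (ii) is the main step. We must show that if the set
\[
	A:=P_I\cap\bigcup_{q(K)\geq q(I)+h}P_K
\]
has $|A|\geq\frac12|P_I|$, then $e_2(I;\Omega)\geq h$. Since $K^2=(J^2)^{(q(K))}\supset(I^2)^{(h)}$, every point $(x_1,x_3)\in A$ lies in some $K^1\times K^3$, and so $(x_1,x_2,x_3)\in K\subset\Omega$ for all $x_2\in(I^2)^{(h)}$. Hence, after reordering coordinates,
\[
	A\times(I^2)^{(h)}\subset\Omega .
\]
Now take $Q=I_2^{(h)}=I^1\times(I^2)^{(h)}\times I^3$, which lies in $\calD_{\mathrm{sZ}}$ because enlarging $I^2$ keeps $\ell(I^3)\leq\ell(I^1)\ell(I^2)$. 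Apply Lemma \ref{lem:half-slab-halo} with $\mathcal B=\calD_{\mathrm{sZ}}$, with the grouped variable $(x_1,x_3)$ playing one role and $x_2$ the other, taking $E=A$ and $L^2=I^2$. The slab $(I^1\times I^3)\times I^2=I$ lies in $\Omega$, so the lemma gives $Q\subset\widetilde\Omega$ and hence $e_2(I;\Omega)\geq h$. Taking $h=b(I)+1$ gives (ii).

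The point to watch is that Lemma \ref{lem:half-slab-halo} only requires the two coordinate groups to be complementary. We regroup as $(1,3)$ versus $2$, which is legitimate, and we need $Q$ itself to lie in $\mathcal B$, which holds by the sub-Zygmund membership just noted. With (i) and (ii) verified in both configurations, Lemma \ref{lem:fixed-total-packing} applies as described in the second paragraph and the estimate follows.
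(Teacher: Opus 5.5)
Your proposal is correct and follows the paper's proof essentially step by step. You use the same split into $\mathcal N_1(J)$ and $\mathcal N_2(J)$ and the same data $X=J^1\times J^3$, $P_I=I^1\times I^3$, $a(I)=q(I)$, $b(I)=e_2(I;\Omega)$ for Lemma \ref{lem:fixed-total-packing}. Fixed-$q$ disjointness comes, as in the paper, from the Zygmund relation together with the common cube $I^2=K^2$. Hypothesis (ii) comes from the same regrouped half-slab argument on $Q=I_2^{(h)}\in\calD_{\mathrm{sZ}}$, and $w^\top$ handles the second configuration.

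One small overstatement: the boundary relation is not used only in hypothesis (i). It also underlies the order comparisons \eqref{eq:zygmund-order-gap-comparisons}, which give $p(I)\geq q(I)$ and hence the bound $W_w(I)\leq w(q(I),e_2(I;\Omega))$.
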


\begin{proof}
	Fix the earlier rectangle $J$. The sum in \eqref{eq:zygmund-local-near-overlap} splits
	according to \eqref{eq:first-intersection} and \eqref{eq:second-intersection}. We bound
	each part by $2\|w\|_{\mathrm{ft}}|J|$, using Lemma
	\ref{lem:fixed-total-packing} once for each
	configuration. Denote the two families of later near successors by
	\[
	\begin{aligned}
		\mathcal N_1(J)&:=\{I:J\prec I,\ (J,I)\text{ near, and }
		\eqref{eq:first-intersection}\text{ holds}\},\\
		\mathcal N_2(J)&:=\{I:J\prec I,\ (J,I)\text{ near, and }
		\eqref{eq:second-intersection}\text{ holds}\}.
	\end{aligned}
	\]
	We give the details for $\mathcal N_1(J)$. The second configuration follows by exchanging
	the first and second coordinate roles, as at the end of the bi-parameter proof, and
	replacing $w$ by $w^\top$. Until the final symmetry paragraph, all notation below is
	local to the fixed $J$ and the first configuration.

	For $I\in\mathcal N_1(J)$, set
	\begin{equation}\label{eq:zygmund-first-data}
	\begin{gathered}
		p(I):=\nu_1(I)-\nu_1(J),\qquad
		q(I):=\nu_2(J)-\nu_2(I),\qquad
		P_I:=I^1\times I^3\subset J^1\times J^3,\\
		a(I):=q(I),\qquad b(I):=e_2(I;\Omega),\qquad
		I^2=(J^2)^{(q(I))},\qquad |J\cap I|=|P_I||J^2|,\\
		p(I)\geq q(I),\qquad e_1(I;\Omega)\geq p(I).
	\end{gathered}
	\end{equation}
	The first comparison in \eqref{eq:zygmund-order-gap-comparisons} gives
	$p(I)\geq q(I)$, while the near condition \eqref{eq:first-near} gives
	$e_1(I;\Omega)\geq p(I)$.

	We verify hypothesis (i) of Lemma
	\ref{lem:exclusion-packing}. For each fixed integer $q$, the rectangles
	$P_I$ with $I\in\mathcal N_1(J)$ and $q(I)=q$ are pairwise disjoint. Indeed, if
	$P_I\cap P_K\neq\varnothing$, then $I^1$ and $K^1$ meet and are dyadically nested.
	After interchanging $I$ and $K$ if necessary, assume that $I^1\subset K^1$.
	The third-coordinate cubes also meet. Moreover, $q(I)=q(K)$ gives
	$I^2=K^2=(J^2)^{(q(I))}$. The Zygmund relation therefore gives
	\[
		\frac{\ell(I^3)}{\ell(K^3)}
		=\frac{\ell(I^1)\ell(I^2)}{\ell(K^1)\ell(K^2)}
		=\frac{\ell(I^1)}{\ell(K^1)}.
	\]
	The displayed equality gives $\ell(I^3)\leq\ell(K^3)$. Dyadic nesting of the meeting
	third-coordinate cubes now gives
	$I^3\subset K^3$. Together with $I^2=K^2$, this yields $I\subset K$, contrary to
	pairwise incomparability unless $I=K$. Thus distinct sets $P_I$ cannot meet.

	As in the bi-parameter proof, fixed-$q$ disjointness does not control overlap between
	different values of $q$. To verify hypothesis (ii) of Lemma
	\ref{lem:exclusion-packing}, we prove that, for every integer $h\geq1$,
	\begin{equation}\label{eq:zygmund-first-density-implication}
		\Bigg|P_I\cap
		\bigcup_{\substack{K\in\mathcal N_1(J)\\q(K)\geq q(I)+h}}P_K\Bigg|
		\geq\frac12|P_I|
		\quad\Longrightarrow\quad
		e_2(I;\Omega)\geq h.
	\end{equation}
	This is the density calculation in
	\eqref{eq:planar-first-density-implication}, now with the first and third coordinates
	grouped into $P_I=I^1\times I^3$. Since
	$K^2=(J^2)^{(q(K))}$ and $I^2=(J^2)^{(q(I))}$, every $K$ in the union satisfies
	$K^2\supset(I^2)^{(h)}$. Set
	\[
		A:=P_I\cap
		\bigcup_{\substack{K\in\mathcal N_1(J)\\q(K)\geq q(I)+h}}P_K,
		\qquad
		Q:=I^1\times(I^2)^{(h)}\times I^3=I_2^{(h)}\in\calD_{\mathrm{sZ}}.
	\]
	If $(x_1,x_3)\in A$, choose $K$ with $(x_1,x_3)\in P_K=K^1\times K^3$.
	Then every $x_2\in(I^2)^{(h)}\subset K^2$ gives
	$(x_1,x_2,x_3)\in K\subset\Omega$. Consequently,
	\[
		\{(x_1,x_2,x_3):(x_1,x_3)\in A,\ x_2\in(I^2)^{(h)}\}\subset\Omega.
	\]
	The hypothesis in \eqref{eq:zygmund-first-density-implication} gives
	$|A|\geq|P_I|/2$. Apply the coordinate-reversed form of Lemma
	\ref{lem:half-slab-halo} with $\mathcal B=\calD_{\mathrm{sZ}}$, $G=\Omega$, the
	rectangle $Q$ above, $E^1=A$, and $L^2=I^2$. Since $I\subset\Omega$, the lemma gives
	$Q\subset\widetilde\Omega$, and hence $e_2(I;\Omega)\geq h$.

	Taking $h=e_2(I;\Omega)+1$ in the contrapositive of
	\eqref{eq:zygmund-first-density-implication} verifies hypothesis (ii). We may therefore
	apply Lemma \ref{lem:fixed-total-packing}, with weight $w$, to
	\[
		X=J^1\times J^3,\qquad \mathcal A=\mathcal N_1(J),\qquad
		P_I=I^1\times I^3,\qquad a(I)=q(I),\qquad b(I)=e_2(I;\Omega).
	\]
	The index set is countable,
	and $P_I\subset X$ by \eqref{eq:zygmund-first-data}. Thus
	Lemma \ref{lem:fixed-total-packing} gives
	\begin{equation}\label{eq:zygmund-first-projection-packing}
		\sum_{I\in\mathcal N_1(J)}
		w(q(I),e_2(I;\Omega))|P_I|
		\leq2\|w\|_{\mathrm{ft}}|J^1\times J^3|.
	\end{equation}
	The inequalities in \eqref{eq:zygmund-first-data} give
	\[
		W_w(I)=w(e_1(I;\Omega),e_2(I;\Omega))
		\leq w(q(I),e_2(I;\Omega)).
	\]
	Combining this pointwise bound with
	\eqref{eq:zygmund-first-projection-packing} and
	$|J\cap I|=|P_I||J^2|$ yields
	\begin{equation*}
	\begin{aligned}
		\sum_{I\in\mathcal N_1(J)}W_w(I)|J\cap I|
		&\leq |J^2|\sum_{I\in\mathcal N_1(J)}
		w(q(I),e_2(I;\Omega))|P_I|\\
		&\leq2\|w\|_{\mathrm{ft}}|J^2||J^1\times J^3|=2\|w\|_{\mathrm{ft}}|J|.
	\end{aligned}
	\end{equation*}

	By the coordinate symmetry described above and
	$\|w^\top\|_{\mathrm{ft}}=\|w\|_{\mathrm{ft}}$, the same bound holds with
	$\mathcal N_1(J)$ replaced by $\mathcal N_2(J)$. Adding the two bounds proves
	\eqref{eq:zygmund-local-near-overlap}.
\end{proof}

\subsection{Completion of the two-depth packing proof}\label{sec:zygmund-completion}

\begin{proof}[Proof of Theorem \ref{thm:zygmund-admissible-weight}]
	Lemma \ref{lem:large-gap-predecessors} verifies hypothesis (i) of Lemma
	\ref{lem:local-overlap-to-marginals} with $\eta=1/4$, while Lemma
	\ref{lem:zygmund-near} verifies hypothesis (ii) for $W_w$ with constant $4\|w\|_{\mathrm{ft}}$.
	As in the bi-parameter proof, every value of $w$ is at most
	$\|w\|_{\mathrm{ft}}$. Apply Lemma
	\ref{lem:local-overlap-to-marginals} to $\|w\|_{\mathrm{ft}}^{-1}W_w$. The resulting
	inclusion-probability estimate is
	\[
		\mathbb P\bigl(\{\theta\in\Theta:I\in\mathcal G(\theta)\}\bigr)
		\geq\frac{\|w\|_{\mathrm{ft}}^{-1}W_w(I;\Omega)}{32(4+1)}
		=\kappa_wW_w(I;\Omega),
	\]
	which is \eqref{eq:zygmund-general-marginals}.

	Since $\operatorname{sh}(\mathcal U)\subset\Omega$, Remark
	\ref{rem:marginals-to-scalar}, applied with this lower bound, directly gives a
	$1/8$-sparse subfamily $\mathcal G_0\subset\mathcal U$ such that
	\[
		\sum_{I\in\mathcal U}W_w(I;\Omega)|I|
		\leq\frac{16}{\kappa_w}|\operatorname{sh}(\mathcal G_0)|
		=2560\|w\|_{\mathrm{ft}}|\operatorname{sh}(\mathcal G_0)|,
	\]
	which completes \eqref{eq:zygmund-general-sparse-packing}.
\end{proof}

\subsection{Square-root exponential comparison}

The preceding proof constructed sparse families with marginal lower bounds and used only an
$L^1$ estimate for each selected family. We now separate the two additional ingredients that
give a stronger conclusion. First, every sparse Zygmund family satisfies $L^q$ overlap bounds
with growth $q^2$, and hence a square-root exponential estimate. This fact concerns sparse
families alone; it neither assumes nor uses incomparability. Second, the marginal lower bounds
allow us to transfer that unweighted overlap estimate to the weighted height by convexity and
then to choose one sparse subfamily.

For the transfer we use $\cosh(\lambda\sqrt t)-1$. Unlike the more familiar
$e^{\lambda\sqrt t}$, this function is increasing and convex in $t$ on $[0,\infty)$, while
having the same square-root exponential growth for large $t$. This explains the form of the
sparse comparison below; the usual exponential estimate then follows from
$e^u\leq2\cosh u$. The result is the square-root analogue of
\eqref{eq:planar-general-sparse-exponential}.

The estimate \eqref{eq:zygmund-general-Lp} below is stated for the full range
$1\leq q<\infty$. At $q=1$, Theorem \ref{thm:zygmund-admissible-weight} gives a better
numerical constant; the new point here is the $q^2$ growth for $q>1$.

\begin{thm}[Square-root exponential comparison for an admissible weight]
\label{thm:zygmund-admissible-exponential}
Let $(\Omega,\mathcal U)$ be an admissible pair in the Zygmund setting, and let $w$ be a
nonzero admissible weight. There exist an absolute constant $b_{\mathrm Z}>0$ and a
$1/8$-sparse subfamily
$\mathcal G_0\subset\mathcal U$ such that
\begin{equation}\label{eq:zygmund-general-sparse-exponential}
\begin{aligned}
	&\int_{\R^{d_1+d_2+d_3}}
	\left[\cosh\!\bigl(b_{\mathrm Z}\sqrt{\kappa_wH_w(x)}\bigr)-1\right]\,\ud x\\
	&\quad\leq\int_{\R^{d_1+d_2+d_3}}
	\left[\cosh\!\bigl(b_{\mathrm Z}\sqrt{h_{\mathcal G_0}(x)}\bigr)-1\right]\,\ud x
	\leq\frac14|\operatorname{sh}(\mathcal G_0)|.
\end{aligned}
\end{equation}
Consequently,
\begin{equation}\label{eq:zygmund-general-exponential}
	\int_\Omega
	\exp\!\bigl(b_{\mathrm Z}\sqrt{\kappa_wH_w(x)}\bigr)\,\ud x
	\leq\frac52|\Omega|.
\end{equation}
The same sparse family satisfies, for every $1\leq q<\infty$,
\begin{equation}\label{eq:zygmund-general-Lp}
	\|H_w\|_{L^q(\Omega)}
	\leq\frac{2q^2}{b_{\mathrm Z}^2\kappa_w}
	|\operatorname{sh}(\mathcal G_0)|^{1/q}.
\end{equation}
\end{thm}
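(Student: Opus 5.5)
The plan mirrors the proof of Theorem \ref{thm:planar-admissible-exponential}. The change is that Rey's incomparable estimate (Lemma \ref{lem:incomparable-exponential-overlap}) is replaced by a square-root exponential bound that holds for \emph{every} $1/8$-sparse Zygmund family. We start from the probability space $(\Theta,\mathcal A,\mathbb P)$ and the $1/8$-sparse families $\mathcal G(\theta)$ from Theorem \ref{thm:zygmund-admissible-weight}, which satisfy the marginal bounds \eqref{eq:zygmund-general-marginals} with $\kappa=\kappa_w$. The rest splits into an unweighted sparse estimate, a convex transfer, and elementary consequences.

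\emph{Step 1: unweighted sparse overlap.} For an $\eta$-sparse family $\mathcal G\subset\calD_Z$ we prove $\|h_{\mathcal G}\|_{L^q}^q\leq (Cq)^{2q}|\operatorname{sh}(\mathcal G)|$, that is, $\|h_{\mathcal G}\|_q\lesssim q^2|\operatorname{sh}(\mathcal G)|^{1/q}$. We argue by duality: for $g\in L^{q'}$,
\[
\int h_{\mathcal G}g=\sum_{I}\ave{g}_I|I|\leq\eta^{-1}\sum_I\int_{E_I}M_Zg\leq\eta^{-1}\int_{\operatorname{sh}(\mathcal G)}M_Zg .
\]
Here $M_Z$ is the Zygmund (or sub-Zygmund, which dominates it) strong maximal function. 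Its $L^{p}$ norm behaves like $(p')^{2}$ as $p\to1$, because it is controlled by a composition of two one-parameter-type maximal operators; this is the ``Zygmund maximal theorem'' the section mentions. Hölder on $\operatorname{sh}(\mathcal G)$ with exponent $q'$ then gives $\|h_{\mathcal G}\|_q\leq\eta^{-1}\|M_Z\|_{q'\to q'}|\operatorname{sh}(\mathcal G)|^{1/q}\lesssim q^2|\operatorname{sh}(\mathcal G)|^{1/q}$. Next we expand $\cosh(b\sqrt t)-1=\sum_{k\geq1}b^{2k}t^k/(2k)!$ and sum the moments $\int h^k\leq(Ck)^{2k}|\operatorname{sh}|$. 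Since $(Ck)^{2k}/(2k)!\leq (C')^{2k}$, a small absolute $b_{\mathrm Z}$ makes the series at most $\tfrac14|\operatorname{sh}(\mathcal G)|$. The term $k=1$ is only $\int h\leq 8|\operatorname{sh}|$, which is harmless after this choice of $b$.

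\emph{Step 2: transfer.} The function $\Phi(t)=\cosh(b_{\mathrm Z}\sqrt t)-1=\sum_k b^{2k}t^k/(2k)!$ is increasing and convex on $[0,\infty)$, since it is a power series with nonnegative coefficients. The bound from Step 1 is uniform in $\theta$, because $\operatorname{sh}(\mathcal G(\theta))\subset\Omega$, so the averaged integral is finite. Lemma \ref{lem:inclusion-probabilities-to-convex-overlap} then provides $\theta_0$ with $\int\Phi(\kappa_wH_w)\leq\int\Phi(h_{\mathcal G_0})$, where $\mathcal G_0:=\mathcal G(\theta_0)$. This proves \eqref{eq:zygmund-general-sparse-exponential}. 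For \eqref{eq:zygmund-general-exponential}, we use $e^u\leq 2\cosh u$ on $\Omega$ and $H_w=0$ off $\Omega$, which gives at most $2(|\Omega|+\tfrac14|\Omega|)=\tfrac52|\Omega|$.

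\emph{Step 3: $L^q$ bounds.} From $\cosh(b\sqrt t)-1\geq b^{2q}t^q/(2q)!$ for integer $q$ (and by interpolation or a Gamma-function version for real $q$), we get $\kappa_w^q\int H_w^q\leq (2q)!\,b^{-2q}\cdot\tfrac14|\operatorname{sh}(\mathcal G_0)|$. Then $((2q)!)^{1/q}\leq(2q)^2$ gives \eqref{eq:zygmund-general-Lp}. For non-integer $q$ it is cleaner to use $\cosh x-1\geq x^{2q}/\Gamma(2q+1)$, valid for $x\ge0$ and $q\geq1$ up to checking constants, together with $\Gamma(2q+1)^{1/q}\leq (2q)^2$.

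The main obstacle is Step 1, specifically getting the precise $q^2$ growth of the sub-Zygmund maximal operator with explicit constants uniform in dimensions. I would first try to dominate $M_{\calD_{\mathrm{sZ}}}$ by an iteration $M^{(1,3)}\circ M^{(2,3)}$-type of one-parameter-like dyadic maximal operators, each with weak-type/$L^p$ norm $\sim p'$.
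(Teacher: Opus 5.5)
Your three steps follow the paper's proof in structure: duality against a maximal function on the disjoint major subsets, summing the moment bounds into the convex $\cosh(b\sqrt t)-1$ estimate, the transfer via Lemma~\ref{lem:inclusion-probabilities-to-convex-overlap}, and then $e^u\le2\cosh u$. However, the analytic input in Step~1 is misjustified, and the route you say you would actually take cannot work.

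\textbf{The sub-Zygmund route fails.} The sub-Zygmund maximal operator does not have $(p')^2$ growth. Take $d_1=d_2=d_3=1$, the standard grid, and $f=1_{[0,2^{-n})^2\times[0,2^{-3n})}$. The rectangles $[0,2^{-k_1})\times[0,2^{-k_2})\times[0,2^{-2n-k_3})$ with $0\le k_i\le n$ all belong to $\calD_{\mathrm{sZ}}$. The usual computation gives $\|M_{\calD_{\mathrm{sZ}}}f\|_p^p/\|f\|_p^p\ge2^{-3}\bigl(\sum_{j=0}^n2^{-(p-1)j}\bigr)^3$, so $\|M_{\calD_{\mathrm{sZ}}}\|_{L^p\to L^p}\gtrsim(p-1)^{-3}$ as $p\to1$. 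More conceptually, compressing $x_3$ by $2^{-M}$ maps any finite family of tri-parameter dyadic rectangles into $\calD_{\mathrm{sZ}}$, so $M_{\calD_{\mathrm{sZ}}}$ has the $L^p$ norms of the tri-parameter strong maximal function. Consequently:
\begin{itemize}
\item no domination of $M_{\calD_{\mathrm{sZ}}}$ by a composition of two operators of norm $\sim p'$ can exist;
\item duality through $M_{\calD_{\mathrm{sZ}}}$ yields only $\|h_{\mathcal G}\|_q\lesssim q^3$, i.e.\ cube-root exponential integrability.
\end{itemize}

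\textbf{Naive iteration also fails for the Zygmund operator.} Write $\ave{f}_I$ as an $I^1$-average of $(x_2,x_3)$-averages over $I^2\times I^3$. The eccentricity $\ell(I^3)/\ell(I^2)=\ell(I^1)$ varies with the outer scale. So the inner supremum is the full bi-parameter strong maximal function in $(x_2,x_3)$, and you are back to $(p')^3$.

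\textbf{What Step~1 needs.} You need $\|M_{\calD_Z}\|_{L^p\to L^p}\lesssim(p-1)^{-2}$ for $1<p\le2$, for the maximal operator over $\calD_Z$ itself. This suffices in your duality argument because $\mathcal G\subset\calD_Z$, so $\ave{g}_I\le M_{\calD_Z}g$ on $E_I$. That bound is C\'{o}rdoba's theorem: weak $L\log L$ for the dyadic Zygmund basis, plus interpolation. It is proved by a covering argument, not by iteration, and the paper quotes it as a black box rather than deriving it. Replace your justification with this citation and drop the sub-Zygmund alternative. Step~1 then becomes the paper's Lemma~\ref{lem:sparse-zygmund-square-root-overlap}, and Steps~2 and~3 go through as you describe.

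\textbf{A smaller point in Step~3.} The estimate $((2q)!)^{1/q}\le(2q)^2$ yields the constant $4\cdot4^{-1/q}q^2$, which exceeds the stated $2q^2$ when $q>2$. The non-integer case is also left unchecked. The paper instead uses $u^{2q}\le8(2q/e)^{2q}(\cosh u-1)$ for all real $q\ge1$, which gives the constant $2$ directly. A Stirling bound would also repair your version for integer $q$.
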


The unweighted input in the preceding results is isolated next. Its square-root exponential
conclusion is also implicit in Rey's general maximal-function-to-overlap principle
\cite{Rey2026antichain}*{Theorem 1.1(a)}, combined with the weak $L\log L$ estimate for the
dyadic Zygmund maximal function. We give the short direct proof because it records both the
precise $q^2$ growth and the convex $\cosh$ estimate used in the transfer to $H_w$.

\begin{lem}[Square-root overlap for sparse Zygmund families]
	\label{lem:sparse-zygmund-square-root-overlap}
	There exist absolute constants $C_{\mathrm Z}\geq1$ and $b_{\mathrm Z}>0$ with the
	following property. Let $\mathcal G\subset\calD_Z$ be a $1/8$-sparse family with
	$|\operatorname{sh}(\mathcal G)|<\infty$. Then, for every $1\leq q<\infty$,
	\begin{equation}\label{eq:sparse-zygmund-Lp}
		\|h_{\mathcal G}\|_{L^q}
		\leq C_{\mathrm Z}q^2|\operatorname{sh}(\mathcal G)|^{1/q},
	\end{equation}
	and
	\begin{equation}\label{eq:sparse-zygmund-cosh}
		\int_{\R^{d_1+d_2+d_3}}
		\left[\cosh\!\bigl(b_{\mathrm Z}\sqrt{h_{\mathcal G}(x)}\bigr)-1\right]\,\ud x
		\leq\frac14|\operatorname{sh}(\mathcal G)|.
	\end{equation}
	Consequently,
	\[
		\int_{\operatorname{sh}(\mathcal G)}
		\exp\!\bigl(b_{\mathrm Z}\sqrt{h_{\mathcal G}(x)}\bigr)\,\ud x
		\leq\frac52|\operatorname{sh}(\mathcal G)|.
	\]
\end{lem}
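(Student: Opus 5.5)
The plan is to obtain the $L^q$ bound \eqref{eq:sparse-zygmund-Lp} by duality from the $L^{p}$ boundedness of the dyadic Zygmund maximal operator $M_Z:=M_{\calD_Z}$, and then to sum the moments into the $\cosh$ estimate. Fix $1<q<\infty$, let $q'$ be the dual exponent, and take $g\ge0$ with $\|g\|_{L^{q'}}\le1$. Using the pairwise disjoint sets $E_I\subset I$ with $|E_I|\ge|I|/8$, we get
\[
\int h_{\mathcal G}g=\sum_{I\in\mathcal G}\ave{g}_I|I|\le8\sum_{I\in\mathcal G}\int_{E_I}M_Zg\le8\int_{\operatorname{sh}(\mathcal G)}M_Zg\le8|\operatorname{sh}(\mathcal G)|^{1/q}\|M_Zg\|_{L^{q'}}.
\]
So everything reduces to the operator norm of $M_Z$ on $L^{q'}$ as $q'\downarrow1$.

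The key input, and the step I expect to be the main obstacle, is the bound $\|M_Z\|_{L^{p}\to L^{p}}\lesssim(p')^2$, that is, $\lesssim(p-1)^{-2}$ as $p\downarrow1$. With $p=q'$ this gives $\|M_Z g\|_{L^{q'}}\lesssim q^2$, which is exactly the claimed $q^2$ growth. I would prove it by domination. A Zygmund rectangle is determined by $I^1$, $I^2$ and $I^3$, and at a fixed point the cube $I^3$ is fixed once the first two scales are chosen. Hence $M_Z$ should be controlled by a composition of two one-parameter dyadic maximal operators. The natural choice is $M_Z g\le M^{(1,3)}M^{(2,3)}g$, where $M^{(i,3)}$ is a dyadic maximal operator over rectangles $I^i\times I^3$ with $\ell(I^3)$ a fixed multiple of $\ell(I^i)$, with the third variable suitably reparametrized. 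The cleanest route, I suspect, is to fiber over the second variable and check that the Zygmund averages are nested averages of this form. Each such operator is a dyadic (martingale) maximal operator with norm $p'$, so the composition has norm at most $(p')^2$.

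If this factorization fails for a technical reason, I would fall back on the weak $L\log L$ estimate for the dyadic Zygmund maximal function quoted before the lemma, and interpolate it with the trivial $L^\infty$ bound via Marcinkiewicz. This also yields $(p-1)^{-2}$ behaviour. For $q=1$ the bound is immediate from sparseness, since $\|h_{\mathcal G}\|_{L^1}=\sum|I|\le8|\operatorname{sh}(\mathcal G)|$.

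For the $\cosh$ estimate, I would expand
\[
\cosh(b\sqrt h)-1=\sum_{k\ge1}\frac{b^{2k}h^k}{(2k)!}.
\]
Each term is supported on $\operatorname{sh}(\mathcal G)$, and the moment bound gives
\[
\int h^k\le C_Z^k k^{2k}|\operatorname{sh}(\mathcal G)|.
\]
Since $(2k)!\ge(2k/e)^{2k}$, the $k$th term is at most $(bC_Ze/2)^{2k}$ in the sense that
\[
\frac{b^{2k}C_Z^kk^{2k}}{(2k)!}\le\Bigl(\frac{b^2C_Ze^2}{4}\Bigr)^k .
\]
Choosing $b_{\mathrm Z}$ small enough makes the resulting geometric series at most $1/4$, which proves \eqref{eq:sparse-zygmund-cosh}. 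For the final consequence, use $e^u\le2\cosh u$ on $\operatorname{sh}(\mathcal G)$:
\[
\int_{\operatorname{sh}(\mathcal G)}e^{b\sqrt h}\le2\int_{\operatorname{sh}(\mathcal G)}\bigl(\cosh(b\sqrt h)-1\bigr)+2|\operatorname{sh}(\mathcal G)|\le\tfrac52|\operatorname{sh}(\mathcal G)|.
\]
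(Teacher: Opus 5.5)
\textbf{Where the proposal fails.} Your duality step, the summation of the moments into the $\cosh$ series via $(2k)!\ge(2k/e)^{2k}$, and the final use of $e^u\le2\cosh u$ all coincide with the paper's proof. (Your phrase ``$(bC_Ze/2)^{2k}$'' should read $(b\sqrt{C_{\mathrm Z}}\,e/2)^{2k}$, as your own display shows.)

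The step that fails is your primary justification of $\|M_Z\|_{L^p\to L^p}\lesssim(p-1)^{-2}$. No pointwise bound $M_Zg\lesssim M^{(1,3)}M^{(2,3)}g$ holds with one-parameter dyadic operators of fixed eccentricities $\ell(I^3)=c_1\ell(I^1)$ and $\ell(I^3)=c_2\ell(I^2)$. Here is a counterexample.
\begin{itemize}
\item Work with one-dimensional coordinates. Let $R$ be a dyadic Zygmund rectangle with sides $s,t,st$.
\item Take $x$ in the Zygmund parent $(R^1)^{(1)}\times R^2\times(R^3)^{(1)}$ with $x_1\notin R^1$ and $x_3\in R^3$. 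Then $M_Z1_R(x)\ge1/4$.
\item Put $F:=M^{(2,3)}1_R$. Then $F(y)=0$ unless $y_1\in R^1$. Moreover $F(y)\le2^{-m}$ when $(R^3)^{(m)}$ is the smallest ancestor of $R^3$ containing $y_3$.
\item Any $(1,3)$-rectangle through $x$ that sees $F$ must have $\ell(I^1)\ge2s$. Hence $\ell(I^3)\ge2c_1s\gg st$ once $t\ll c_1$.
\item Summing over the dyadic annuli around $R^3$ gives $M^{(1,3)}F(x)\lesssim c_1^{-1}\,t\log(2c_1/t)$, which tends to $0$ as $t\to0$.
\end{itemize}
The other order of composition fails symmetrically.

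The reason is structural. Inside a Zygmund rectangle the eccentricity $\ell(I^3)/\ell(I^1)=\ell(I^2)$ is not fixed. If you let it vary, $M^{(1,3)}$ becomes a strong maximal operator in $(x_1,x_3)$. Fibering over $x_2$ then gives only $M_Z\le M^{(2)}M^{(1,3)}_S$, with norm of order $(p')^3$. That is the cube-root exponential scale of the tri-parameter basis. The improvement from $(p')^3$ to $(p')^2$ is exactly what C\'ordoba's weak $L\log L$ theorem supplies; this iteration does not produce it.

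\textbf{The fallback is the paper's route.} Your fallback is exactly the paper's argument. C\'ordoba's dyadic weak $L\log L$ estimate, interpolated against the trivial $L^\infty$ bound, gives $\|M_Z\|_{L^p\to L^p}\lesssim(p-1)^{-2/p}\lesssim(p-1)^{-2}$ for $1<p\le2$, and a uniform bound for $p\ge2$. Duality with $p=q'$ then yields the $q^2$ growth. So your proposal is a complete proof only through this fallback, and the factorization should be discarded.

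There is one minor difference in bookkeeping. The paper invokes the maximal bound only for $q\ge2$ and treats $1<q<2$ by H\"older from $q=2$, using that $h_{\mathcal G}$ is supported on $\operatorname{sh}(\mathcal G)$. Your direct duality for every $q>1$ works equally well once the uniform large-$p$ bound is recorded.
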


\begin{proof}
	The classical dyadic Zygmund maximal theorem of C\'{o}rdoba
	\cite{Cordoba1979}, in its weak $L\log L$ form, together with standard endpoint
	interpolation, allows us to fix an absolute constant $A_{\mathrm Z}\geq1$ such that
	\begin{equation}\label{eq:zygmund-maximal-Lp}
		\|M_{\calD_Z}\|_{L^p\to L^p}
		\leq A_{\mathrm Z}(p-1)^{-2},
		\qquad1<p\leq2.
	\end{equation}
	See also \cite{FEPI}. Rey recently gave another proof of the underlying dyadic weak
	$L\log L$ endpoint, using the bi-parameter exponential-overlap estimate in Lemma
	\ref{lem:incomparable-exponential-overlap} as a key input; see
	\cite{Rey2026antichain}*{Theorem A and Sections 1.3 and 2}. His argument, written in
	$\R^3$, works verbatim in
	$\R^{d_1+d_2+d_3}$.

	Put $C_{\mathrm Z}:=8A_{\mathrm Z}$. We first prove
	\eqref{eq:sparse-zygmund-Lp}. Let $q\geq2$, put $p=q'$, and take
	$g\geq0$ with $\|g\|_p=1$. Choose pairwise disjoint measurable sets
	$E_I\subset I$ such that $|E_I|\geq|I|/8$ for every $I\in\mathcal G$. For
	$x\in E_I$, we have $M_{\calD_Z}g(x)\geq\ave{g}_I$. Hence
	\begin{align*}
		\int h_{\mathcal G}g
		&=\sum_{I\in\mathcal G}|I|\ave{g}_I
		\leq8\sum_{I\in\mathcal G}|E_I|\ave{g}_I
		\leq8\int_{\operatorname{sh}(\mathcal G)}M_{\calD_Z}g\\
		&\leq8A_{\mathrm Z}(q-1)^2
		|\operatorname{sh}(\mathcal G)|^{1/q}
		\leq C_{\mathrm Z}q^2|\operatorname{sh}(\mathcal G)|^{1/q}.
	\end{align*}
	Here we used \eqref{eq:zygmund-maximal-Lp} with $p=q'$, so
	$p-1=1/(q-1)$. Taking the supremum over $g$ proves the estimate for $q\geq2$.
	At $q=1$, sparseness gives
	\[
		\|h_{\mathcal G}\|_{L^1}
		=\sum_{I\in\mathcal G}|I|
		\leq8|\operatorname{sh}(\mathcal G)|
		\leq C_{\mathrm Z}|\operatorname{sh}(\mathcal G)|.
	\]
	At $q=2$, the preceding calculation gives the sharper bound
	$\|h_{\mathcal G}\|_{L^2}\leq C_{\mathrm Z}|\operatorname{sh}(\mathcal G)|^{1/2}$.
	For $1<q<2$, H\"older's inequality and the fact that $h_{\mathcal G}$ is supported on
	$\operatorname{sh}(\mathcal G)$ therefore give the stated estimate.
	This proves \eqref{eq:sparse-zygmund-Lp} in the full range.

	Set
	\[
		b_{\mathrm Z}:=\frac{2}{e\sqrt{5C_{\mathrm Z}}}.
	\]
	For $t\geq0$, the relevant power series is
	\[
		\cosh(b_{\mathrm Z}\sqrt t)-1
		=\sum_{k=1}^{\infty}\frac{b_{\mathrm Z}^{2k}}{(2k)!}t^k.
	\]
	To estimate its $k$th term after substituting $t=h_{\mathcal G}$, use
	\eqref{eq:sparse-zygmund-Lp} with $q=k$:
	\[
		\int h_{\mathcal G}^k
		\leq(C_{\mathrm Z}k^2)^k|\operatorname{sh}(\mathcal G)|.
	\]
	Since $(2k)!\geq(2k/e)^{2k}$, it follows that
	\[
		\frac{b_{\mathrm Z}^{2k}}{(2k)!}\int h_{\mathcal G}^k
		\leq\left(\frac{b_{\mathrm Z}^2C_{\mathrm Z}e^2}{4}\right)^k
		|\operatorname{sh}(\mathcal G)|
		=5^{-k}|\operatorname{sh}(\mathcal G)|.
	\]
	Summing these estimates over the nonnegative power series gives
	\[
		\int\left[\cosh\!\bigl(b_{\mathrm Z}\sqrt{h_{\mathcal G}}\bigr)-1\right]
		\leq|\operatorname{sh}(\mathcal G)|\sum_{k=1}^{\infty}5^{-k}
		=\frac14|\operatorname{sh}(\mathcal G)|,
	\]
	which is \eqref{eq:sparse-zygmund-cosh}. Finally, $e^u\leq2\cosh u$ for $u\geq0$
	gives the last displayed estimate in the statement.
\end{proof}

\emph{Remark.} The order is deliberate. We first sum the $L^q$ estimates
\eqref{eq:sparse-zygmund-Lp} into the convex estimate
\eqref{eq:sparse-zygmund-cosh}. The marginal transfer then gives
\eqref{eq:zygmund-general-sparse-exponential}, from which we recover
\eqref{eq:zygmund-general-Lp}. This route produces one family $\mathcal G_0$ that works
for every $q$; transferring $t^q$ separately could select a different sparse family for
each $q$.

The random families produced by Theorem \ref{thm:zygmund-admissible-weight} are both sparse
and incomparable, but Lemma \ref{lem:sparse-zygmund-square-root-overlap} and the transfer
below use only their sparseness. A full exponential improvement would therefore require
exploiting their additional geometry without losing the marginal bounds
\eqref{eq:zygmund-general-marginals}.

\begin{proof}[Proof of Theorem \ref{thm:zygmund-admissible-exponential}]
	The proof of Theorem \ref{thm:zygmund-admissible-weight} produces a probability space
	$(\Theta,\mathcal A,\mathbb P)$ and, for every $\theta\in\Theta$, a
	$1/8$-sparse family $\mathcal G(\theta)\subset\mathcal U$ satisfying
	\eqref{eq:zygmund-general-marginals}. Let $b_{\mathrm Z}$ be the constant in Lemma
	\ref{lem:sparse-zygmund-square-root-overlap}, and put
	\[
		\Psi(t):=\cosh(b_{\mathrm Z}\sqrt t)-1.
	\]
	The power series for $\cosh$ shows that $\Psi$ is increasing and convex on
	$[0,\infty)$ and satisfies $\Psi(0)=0$. Since every $\mathcal G(\theta)$ is
	$1/8$-sparse, Lemma \ref{lem:sparse-zygmund-square-root-overlap} gives
	\begin{equation}\label{eq:zygmund-selected-cosh}
		\int\Psi(h_{\mathcal G(\theta)})
		\leq\frac14|\operatorname{sh}(\mathcal G(\theta))|,
		\qquad\theta\in\Theta.
	\end{equation}

	Since $\mathcal G(\theta)\subset\mathcal U$, its shadow is contained in $\Omega$, so
	\eqref{eq:zygmund-selected-cosh} shows that the averaged integral in
	\eqref{eq:abstract-convex-transfer} is finite. Lemma
	\ref{lem:inclusion-probabilities-to-convex-overlap}, applied with
	$w(I)=W_w(I;\Omega)$, $\kappa=\kappa_w$ and $\Phi=\Psi$, therefore combines with
	the marginal estimate \eqref{eq:zygmund-general-marginals} to give
	$\theta_0\in\Theta$ such that
	\[
		\int\Psi(\kappa_wH_w)
		\leq\int\Psi(h_{\mathcal G(\theta_0)}).
	\]
	Set $\mathcal G_0:=\mathcal G(\theta_0)$. Together with
	\eqref{eq:zygmund-selected-cosh}, the preceding inequality is precisely
	\eqref{eq:zygmund-general-sparse-exponential}.

	Put $Y:=\kappa_wH_w$.
	For $u\geq0$, $e^u\leq2\cosh u$. Since $\Psi\geq0$,
	\eqref{eq:zygmund-general-sparse-exponential} gives
	\[
		\int_\Omega e^{b_{\mathrm Z}\sqrt Y}
		\leq2|\Omega|+2\int_\Omega\Psi(\kappa_wH_w)
		\leq2|\Omega|+\frac12|\operatorname{sh}(\mathcal G_0)|
		\leq\frac52|\Omega|.
	\]
	This proves \eqref{eq:zygmund-general-exponential}.

	It remains to extract the sparse $L^q$ estimates. For $u\geq0$ and $q\geq1$,
	\begin{equation}\label{eq:square-root-cosh-to-Lp}
		u^{2q}\leq8\left(\frac{2q}{e}\right)^{2q}(\cosh u-1).
	\end{equation}
	Indeed, if $0\leq u\leq1$, then
	$u^{2q}\leq u^2\leq2(\cosh u-1)$, where the second inequality follows from
	the power series for $\cosh$. The coefficient on the right of
	\eqref{eq:square-root-cosh-to-Lp} is at least $2$. If $u\geq1$, then
	$e^{-u}\leq e^{-1}<1/2$, and therefore
	\[
		\cosh u-1=\frac{e^u}{2}(1-e^{-u})^2\geq\frac{e^u}{8}.
	\]
	Moreover, $u^{2q}\leq(2q/e)^{2q}e^u$ by maximizing
	$u^{2q}e^{-u}$.

	Apply \eqref{eq:square-root-cosh-to-Lp} pointwise with
	\[
		u:=b_{\mathrm Z}\sqrt{\kappa_wH_w(x)}.
	\]
	Using \eqref{eq:zygmund-general-sparse-exponential}, we obtain
	\[
		b_{\mathrm Z}^{2q}\kappa_w^q\int_\Omega H_w^q
		\leq2\left(\frac{2q}{e}\right)^{2q}
		|\operatorname{sh}(\mathcal G_0)|.
	\]
	Taking the $q$th root and using
	$2^{1/q}(2/e)^2<2$ proves \eqref{eq:zygmund-general-Lp}.
\end{proof}

\subsection{Full exponential packing under diagonal summability}
\label{sec:zygmund-full-exponential}

The preceding square-root estimate applies to every admissible weight. A smaller natural class admits
full exponential integrability even in the Zygmund setting. The useful viewpoint is to
replace $w$ by its one-variable \emph{diagonal envelope}
\[
	w^\Delta(N):=\max_{0\leq a\leq N}w(a,N-a),
	\qquad
	\|w\|_{\mathrm{env}}:=\|w^\Delta\|_{\ell^1(\N)}
	=\sum_{N=0}^{\infty}w^\Delta(N).
\]
Thus the relevant condition is precisely $w^\Delta\in\ell^1(\N)$. We denote by
$\mathcal W_{\mathrm{env}}$ the weights satisfying this condition.
For every $w\in\mathcal W_{12}$,
\[
	\|w\|_{\mathrm{ft}}
	\leq\|w\|_{\mathrm{env}}
	\leq\|w\|_{\ell^1(\N^2)}.
\]
The second inequality is immediate. For the first, note that $w^\Delta$ is decreasing:
from any $(a,b)$ with $a+b=N+1$, subtracting $1$ from a positive coordinate produces
a point on the preceding diagonal whose weight is at least $w(a,b)$. Thus
$w^\Delta(N+1)\leq w^\Delta(N)$, and
\[
	\sum_{a=0}^Nw(a,N-a)
	\leq(N+1)w^\Delta(N)
	\leq\sum_{j=0}^Nw^\Delta(j)
	\leq\|w\|_{\mathrm{env}}.
\]
Diagonal-envelope summability is strictly weaker than $\ell^1$ summability: for
$w(a,b)=(a+b+1)^{-2}$, the envelope norm is
$\sum_{N\geq0}(N+1)^{-2}<\infty$, whereas the $\ell^1$ norm is
$\sum_{N\geq0}(N+1)^{-1}=\infty$. More generally, every total-depth weight
$w(a,b)=\omega(a+b)$ with $\omega$ decreasing and summable satisfies
$w^\Delta=\omega$ and $\|w\|_{\mathrm{env}}=\|\omega\|_{\ell^1(\N)}$.

\begin{lem}[Product-sparse form of Rey's estimate]
\label{lem:product-sparse-rey}
Let $\mathcal G\subset\calD^1\times\calD^2\times\calD^3$, and let
$0<\eta\leq1$. For $x_3\in\R^{d_3}$, set
\[
	\mathcal G_{x_3}:=\{I\in\mathcal G:x_3\in I^3\},
	\qquad
	\mathcal B_{x_3}:=\{I^1\times I^2:I\in\mathcal G_{x_3}\}.
\]
Suppose that, for every $x_3$, the projection $I\mapsto I^1\times I^2$ is injective
on $\mathcal G_{x_3}$ and $\mathcal B_{x_3}$ is pairwise incomparable. Suppose also
that there are
measurable sets $E_I^{12}\subset I^1\times I^2$ such that
\[
	|E_I^{12}|\geq\eta|I^1\times I^2|
\]
and the sets $E_I^{12}\times I^3$, $I\in\mathcal G$, are pairwise disjoint. Then,
with $t_\eta=\log(1+\eta/32)$,
\begin{equation}\label{eq:product-sparse-rey}
	\int_{\R^{d_1+d_2+d_3}}
	\left[e^{t_\eta h_{\mathcal G}(x)}-1\right]\,\ud x
	\leq\frac1{16}|\operatorname{sh}(\mathcal G)|.
\end{equation}
\end{lem}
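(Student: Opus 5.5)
The natural route is to rerun the proof of Lemma \ref{lem:incomparable-exponential-overlap} almost verbatim, with the product-sparse structure replacing ordinary sparseness. First reduce to finite $\mathcal G$ with finite shadow by monotone convergence. For $I\in\mathcal G$ put $\varphi_I:=|I|^{-1/2}1_I$ and $\rho(I,J):=\int\varphi_I\varphi_J$. The two ingredients are an $L^2$ bound for the Gram-type operator $(c_J)\mapsto(\sum_J\rho(I,J)c_J)_I$ with norm at most $16/\eta$, and a telescoping identity for multiple intersections. Given both, the binomial-moment bound \eqref{eq:binomial-moment-bound} and the final power-series summation go through unchanged, yielding \eqref{eq:product-sparse-rey} with $t_\eta=\log(1+\eta/32)$.

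\textbf{The $L^2$ estimate.} For $f\in L^2(\R^{d_1+d_2+d_3})$, write
\[
	\langle f,\varphi_I\rangle=|I|^{1/2}\ave{f}_I.
\]
Let $M^{12}$ denote the bi-parameter strong dyadic maximal operator acting in $(x_1,x_2)$ for each fixed $x_3$, and define $g(x_1,x_2,x_3):=\ave{f(\cdot,\cdot,\cdot)}$ averaged over $I^3$ in $x_3$. Then
\[
	|\ave{f}_I|\leq\ave{F}_{I^1\times I^2}
\]
with $F(x_1,x_2):=\ave{|f(x_1,x_2,\cdot)|}_{I^3}$. The clean way to organize this is through $N:=M^{12}\circ M^3$, where $M^3$ is the dyadic maximal function in $x_3$. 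For every $y=(y_{12},y_3)\in E_I^{12}\times I^3$ we get $|\ave f_I|\leq N|f|(y)$. Since $N$ is a composition of two operators each bounded on $L^2$ with norms at most $4$ and $2$, its norm is at most $8$; in fact $\|M^{12}\|_{2\to2}\leq4$ alone suffices for the stated constant if $M^3$ is handled with care, and any absolute constant works, with a possible readjustment. Hence
\[
	\sum_I|\langle f,\varphi_I\rangle|^2\leq\frac1\eta\sum_I\int_{E_I^{12}\times I^3}(N|f|)^2\leq\frac{C}{\eta}\|f\|_2^2,
\]
using the disjointness of the sets $E_I^{12}\times I^3$. Duality then gives the analogue of \eqref{eq:normalized-overlap-l2}. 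To match the constant $16/\eta$ exactly, I would bound $\ave{f}_I$ by the strong maximal function over all of $\calD^1\times\calD^2\times\calD^3$, whose $L^2$ norm is at most $2^3=8\leq16$. This suffices, since only the constant $16$ enters later.

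\textbf{The telescoping identity.} This is the step I expect to be the main obstacle, because the third coordinates are not ordered by the hypotheses. Let distinct $I_1,\dots,I_k\in\mathcal G$ share a point $x=(x_{12},x_3)$. All of them lie in $\mathcal G_{x_3}$, so by injectivity their projections are distinct members of the incomparable family $\mathcal B_{x_3}$ with a common point. As in Lemma \ref{lem:incomparable-exponential-overlap}, reorder them so that the first coordinates strictly increase and the second strictly decrease. The third cubes all contain $x_3$ and are therefore a chain, but not necessarily a monotone one. Still, $\bigcap_jI_j=I_1^1\times I_k^2\times\bigcap_jI_j^3$, and
\[
	\rho(I_j,I_{j+1})=\rho^{12}(I_j,I_{j+1})\cdot\frac{|I_j^3\cap I_{j+1}^3|}{|I_j^3|^{1/2}|I_{j+1}^3|^{1/2}}.
\]
The third factor equals $(\min/\max)^{1/2}$ of the two third-cube measures. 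A product of such factors along a chain is at least $|\min_jI_j^3|/(|I_1^3||I_k^3|)^{1/2}$, because each ratio $\min/\max$ bounds the step from one endpoint's $\sqrt{\cdot}$ toward the minimum, and the path from $I_1^3$ down to the smallest cube and back up to $I_k^3$ can only lose. Consequently
\[
	|I_1|^{1/2}|I_k|^{1/2}\prod_j\rho(I_j,I_{j+1})\geq\Bigl|\bigcap_jI_j\Bigr|,
\]
which is exactly the inequality direction needed in \eqref{eq:all-overlap-strings}. I would verify this inequality carefully by induction on $k$.

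\textbf{Summation.} Iterating the Gram bound and applying Cauchy--Schwarz gives
\[
	\int\binom{h_{\mathcal G}}{k}\leq\Bigl(\frac{16}{\eta}\Bigr)^{k-1}\sum_I|I|.
\]
Disjointness of $E_I^{12}\times I^3$ gives $\sum_I|I|\leq\eta^{-1}|\operatorname{sh}(\mathcal G)|$. The computation with $q=e^{t_\eta}-1=\eta/32$ then concludes exactly as before.
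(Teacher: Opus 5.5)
Your proposal has a genuine gap at the telescoping step. The problem is the direction of the inequality, not its details. Along the ordering forced by the first two coordinates, put $s_j:=|I_j^3|$. Since the third cubes are nested,
\[
	\prod_{j=1}^{k-1}\frac{|I_j^3\cap I_{j+1}^3|}{|I_j^3|^{1/2}|I_{j+1}^3|^{1/2}}
	=\exp\Bigl(-\frac12\sum_{j=1}^{k-1}\bigl|\log s_j-\log s_{j+1}\bigr|\Bigr)
	\leq\frac{\min_js_j}{(s_1s_k)^{1/2}}.
\]
The inequality holds because a path from $\log s_1$ to $\log s_k$ through $\log\min_js_j$ has total variation at least $(\log s_1-\log\min_js_j)+(\log s_k-\log\min_js_j)$. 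This is what your phrase ``can only lose'' actually says: the product is at most, not at least, what you need. Equality holds only when the $s_j$ first decrease and then increase along the chain, and nothing in the hypotheses forces this.

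For a counterexample with $d_1=d_2=1$, take the planar chain $[0,1)\times[0,4)$, $[0,2)^2$, $[0,4)\times[0,1)$, which has pairwise disjoint $1/4$-major subsets. Take third cubes $I_1^3=I_3^3\subsetneq I_2^3$. Then
\[
	|I_1|^{1/2}|I_3|^{1/2}\rho(I_1,I_2)\rho(I_2,I_3)
	=\frac{|I_1^3|}{|I_2^3|}\,|I_1\cap I_2\cap I_3|,
\]
and the factor $|I_1^3|/|I_2^3|$ can be made arbitrarily small. If the planar side ratios are also made large, every other ordering of the triple loses a comparably large factor in the first two coordinates. The same non-unimodal pattern already occurs for Zygmund rectangles, e.g.\ $\nu_1=(4,1,0)$, $\nu_2=(0,1,4)$, $\nu_3=(4,2,4)$. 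So the termwise comparison behind \eqref{eq:all-overlap-strings} fails for the three-dimensional kernel $\rho$, and no induction on $k$ will prove it.

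There is also a slip in your $L^2$ step. The $16$ in \eqref{eq:normalized-indicator-l2} is $\|M_{\calD^1\times\calD^2}\|_{2\to2}^2$, not the norm itself. So $\|M\|_{2\to2}\leq8$ gives $64/\eta$, not a constant $\leq16/\eta$. With $q=\eta/32$, the ratio of the final geometric series would then be $64q/\eta=2$, and the series diverges.

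Both problems disappear if you slice before doing any $L^2$ analysis, which is the paper's proof. Fix $x_3$. The sets $E_I^{12}\times I^3$ are pairwise disjoint and every $I\in\mathcal G_{x_3}$ has $x_3\in I^3$, so the sets $E_I^{12}$, $I\in\mathcal G_{x_3}$, are pairwise disjoint. By injectivity, $\mathcal B_{x_3}$ is then an $\eta$-sparse, pairwise incomparable planar family. Moreover $h_{\mathcal G}(\cdot,x_3)=h_{\mathcal B_{x_3}}$, and $\operatorname{sh}(\mathcal B_{x_3})$ is the $x_3$-slice of $\operatorname{sh}(\mathcal G)$. Apply Lemma \ref{lem:incomparable-exponential-overlap} on each slice and integrate with Fubini's theorem. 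This gives \eqref{eq:product-sparse-rey} with exactly the stated $t_\eta$ and $1/16$, and the third coordinate never enters a kernel.
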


\begin{proof}
The argument is the slicing argument of Rey
\cite{Rey2026antichain}*{Sections 1.3 and 2}. Fix $x_3\in\R^{d_3}$. By injectivity,
every $B\in\mathcal B_{x_3}$ has a unique preimage
$I_B\in\mathcal G_{x_3}$. Set $E_B:=E_{I_B}^{12}$. Since all the
corresponding cubes $I_B^3$ contain $x_3$, the sets $E_B$ are pairwise disjoint. Thus
$\mathcal B_{x_3}$ is $\eta$-sparse and, by assumption, pairwise incomparable.
Moreover,
\[
	h_{\mathcal G}(x_{12},x_3)
	=\sum_{B\in\mathcal B_{x_3}}1_B(x_{12}),
	\qquad
	\operatorname{sh}(\mathcal B_{x_3})
	=\{x_{12}:(x_{12},x_3)\in\operatorname{sh}(\mathcal G)\}.
\]
Lemma \ref{lem:incomparable-exponential-overlap} applied to each planar slice,
followed by Fubini's theorem, gives \eqref{eq:product-sparse-rey}.
\end{proof}

\begin{thm}[Diagonal-envelope exponential packing]
\label{thm:diagonal-envelope-exponential}
Let $(\Omega,\mathcal U)$ be an admissible pair in the Zygmund setting. Let
$w\in\mathcal W_{\mathrm{env}}\cap\mathcal W_{12}$ be nonzero, and put
$t_1:=\log(1+1/128)$.
Then there is a $1/4$-sparse subfamily $\mathcal G_0\subset\mathcal U$ such that
\begin{equation}\label{eq:diagonal-envelope-sparse-exponential}
\begin{aligned}
	&\int_{\R^{d_1+d_2+d_3}}
	\left[\exp\!\left(t_1\|w\|_{\mathrm{env}}^{-1}
	H_w(x)\right)-1\right]\,\ud x\\
	&\quad\leq
	\int_{\R^{d_1+d_2+d_3}}
	\left[e^{t_1h_{\mathcal G_0}(x)}-1\right]\,\ud x
	\leq\frac1{16}|\operatorname{sh}(\mathcal G_0)|.
\end{aligned}
\end{equation}
\end{thm}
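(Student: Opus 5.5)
The plan is to bypass the near/far alteration machinery of Lemma \ref{lem:local-overlap-to-marginals}. Instead I would select directly with the stationary exclusion family of Lemma \ref{lem:stationary-one-depth-events}, applied to the one-variable weight $\omega:=w^\Delta$. This $\omega$ is decreasing (as noted above), summable and nonzero. The exclusion index will be the first-coordinate generation $a(I):=\nu_1(I)$, and the exclusion length will be the total depth $b(I):=e_1(I;\Omega)+e_2(I;\Omega)$. For $\theta\in\Theta_\omega$, put
\[
	\mathcal G(\theta):=\{I\in\mathcal U:\theta\in\mathsf S^\omega_{\nu_1(I),\,e_1(I;\Omega)+e_2(I;\Omega)}\}.
\]
By \eqref{eq:stationary-one-depth-probability} and $W_w(I;\Omega)\leq w^\Delta(e_1+e_2)$, the marginals satisfy
\[
	\mathbb P_\omega(I\in\mathcal G(\theta))=\frac{w^\Delta(e_1(I)+e_2(I))}{\|w\|_{\mathrm{env}}}\geq\|w\|_{\mathrm{env}}^{-1}W_w(I;\Omega).
\]
So Lemma \ref{lem:inclusion-probabilities-to-convex-overlap} with $\kappa=\|w\|_{\mathrm{env}}^{-1}$ and $\Phi(u)=e^{t_1u}-1$ gives the first inequality in \eqref{eq:diagonal-envelope-sparse-exponential}. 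This holds once every $\mathcal G(\theta)$ obeys the second inequality, which also supplies the required finiteness since $\operatorname{sh}(\mathcal G(\theta))\subset\Omega$.

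The geometric core is a claim: \emph{any two intersecting members $J\prec I$ of $\mathcal G(\theta)$ form a far pair} in the sense of the Zygmund near/far labels. First, equal $\nu_1$ is impossible for intersecting distinct members of $\mathcal U$. Indeed, $I^1=J^1$ and nested $I^2,J^2$ force the third sides, whose lengths are proportional to $\ell(I^2)$ and $\ell(J^2)$, to nest in the same direction, so one rectangle contains the other. Hence \eqref{eq:stationary-two-sided-exclusion} gives $|\nu_1(I)-\nu_1(J)|>\max\{b(I),b(J)\}\geq e_1(I)+e_2(I)$.

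In configuration \eqref{eq:first-intersection} this says $p=\nu_1(I)-\nu_1(J)>e_1(I;\Omega)$, which is far. In configuration \eqref{eq:second-intersection} we have $\nu_2(I)-\nu_2(J)\geq\nu_1(J)-\nu_1(I)>e_2(I;\Omega)$ by \eqref{eq:zygmund-order-gap-comparisons}, again far. Therefore the product sets $A_I=(I^1\setminus\mathcal F_1(I))\times(I^2\setminus\mathcal F_2(I))\times I^3$ of Lemma \ref{lem:large-gap-predecessors} are pairwise disjoint over $I\in\mathcal G(\theta)$. They have the form $E^{12}_I\times I^3$ with $|E^{12}_I|>\frac14|I^1\times I^2|$. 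In particular $\mathcal G(\theta)$ is $1/4$-sparse, and the selected $\mathcal G_0:=\mathcal G(\theta_0)$ inherits this.

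It remains to check the slice hypotheses of Lemma \ref{lem:product-sparse-rey}. Suppose $I,K\in\mathcal G_{x_3}$ and $I^1\times I^2\subset K^1\times K^2$. The Zygmund relation gives $\ell(I^3)\leq\ell(K^3)$, and both cubes contain $x_3$, so $I^3\subset K^3$. Hence $I\subset K$, so $I=K$ by incomparability of $\mathcal U$. This gives injectivity and incomparability of $\mathcal B_{x_3}$ at once. Lemma \ref{lem:product-sparse-rey} with $\eta=1/4$, for which $t_{1/4}=\log(1+1/128)=t_1$, then yields the second inequality for every $\theta$, and the convex transfer picks $\theta_0$.

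The main obstacle I expect is the far-pair claim. The choice of index is exactly what must make equal-index collisions impossible. With the index $\nu_3$, which looks natural from the order, equal-$\nu_3$ intersecting pairs can be near, and the exclusion property says nothing about them. Choosing $\nu_1$, with the total depth as exclusion length, is what makes the two-sided exclusion cover both intersection configurations simultaneously.
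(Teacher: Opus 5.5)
Your proposal is correct and follows the paper's proof essentially step for step. It uses the same exclusion family from Lemma \ref{lem:stationary-one-depth-events} for $w^\Delta$ with index $\nu_1(I)$ and length $e_1(I;\Omega)+e_2(I;\Omega)$, the same far-pair argument in both configurations, the same disjoint sets $A_I=A_I^{12}\times I^3$, the same slice check for Lemma \ref{lem:product-sparse-rey}, and the same convex transfer; the only cosmetic difference is that you rule out equal first generations for intersecting pairs directly via the Zygmund relation, where the paper cites Lemma \ref{lem:two-intersections}.
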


\begin{proof}
Put $v:=w^\Delta$. Then $v\colon\N\to[0,\infty)$ is decreasing, summable and nonzero.
Lemma \ref{lem:stationary-one-depth-events} provides a probability space
$(\Theta_v,\mathcal A_v,\mathbb P_v)$ and events $\mathsf S^v_{a,b}$ satisfying
\[
	\mathbb P_v(\mathsf S^v_{a,b})
	=\frac{v(b)}{\|w\|_{\mathrm{env}}},
	\qquad a\in\Z,\quad b\geq0,
\]
as well as the two-sided exclusion property
\eqref{eq:stationary-two-sided-exclusion}. Put
$D_I:=e_1(I;\Omega)+e_2(I;\Omega)$, and, for $\theta\in\Theta_v$, define
\[
	\mathcal G(\theta)
	:=\{I\in\mathcal U:\theta\in\mathsf S^v_{\nu_1(I),D_I}\}.
\]
For every $I\in\mathcal U$,
\begin{equation}\label{eq:diagonal-envelope-marginals}
\begin{aligned}
	\mathbb P_v\bigl(\{\theta:I\in\mathcal G(\theta)\}\bigr)
	&=\mathbb P_v(\mathsf S^v_{\nu_1(I),D_I})
	=\frac{v(D_I)}{\|w\|_{\mathrm{env}}}\\
	&=\frac{\max_{a+b=D_I}w(a,b)}{\|w\|_{\mathrm{env}}}
	\geq\frac{W_w(I;\Omega)}{\|w\|_{\mathrm{env}}}.
\end{aligned}
\end{equation}

Fix $\theta\in\Theta_v$, and let $I\neq J$ be intersecting members of
$\mathcal G(\theta)$. After interchanging them if necessary, assume that $J\prec I$.
Lemma \ref{lem:two-intersections} gives $0<|\nu_1(I)-\nu_1(J)|$. Since $\theta$ belongs
to both corresponding exclusion events, the two-sided exclusion property
\eqref{eq:stationary-two-sided-exclusion} gives
\[
	|\nu_1(I)-\nu_1(J)|>\max\{D_I,D_J\}\geq D_I.
\]
In configuration \eqref{eq:first-intersection}, this gives
$\nu_1(I)-\nu_1(J)>D_I\geq e_1(I;\Omega)$, so $(J,I)$ is far. In configuration
\eqref{eq:second-intersection}, the second comparison in
\eqref{eq:zygmund-order-gap-comparisons} gives
\[
	\nu_2(I)-\nu_2(J)
	\geq\nu_1(J)-\nu_1(I)
	>D_I\geq e_2(I;\Omega),
\]
so $(J,I)$ is again far. Thus any two distinct intersecting members of
$\mathcal G(\theta)$ form a far pair.

Write the major subset in \eqref{eq:far-product-subset} as
\[
	A_I=A_I^{12}\times I^3,
	\qquad
	A_I^{12}:=(I^1\setminus\mathcal F_1(I))
	\times(I^2\setminus\mathcal F_2(I)).
\]
Then $|A_I^{12}|>|I^1\times I^2|/4$. Since every pair of distinct intersecting members of
$\mathcal G(\theta)$ is far, and $A_I$ avoids every far predecessor of $I$, the sets
$A_I$, $I\in\mathcal G(\theta)$, are pairwise disjoint. Hence these sets verify the
product-form hypothesis of Lemma \ref{lem:product-sparse-rey}, with
$E_I^{12}=A_I^{12}$ and $\eta=1/4$. They also show that
$\mathcal G(\theta)$ itself is $1/4$-sparse.

It remains to verify the slice hypothesis of Lemma \ref{lem:product-sparse-rey}.
Fix $x_3\in\R^{d_3}$ and set
\[
	\mathcal G_{x_3}(\theta):=\{I\in\mathcal G(\theta):x_3\in I^3\},
	\qquad
	\mathcal B_{x_3}(\theta)
	:=\{I^1\times I^2:I\in\mathcal G_{x_3}(\theta)\}.
\]
The projection $I\mapsto I^1\times I^2$ is injective on
$\mathcal G_{x_3}(\theta)$. Indeed, the Zygmund relation determines $\ell(I^3)$ from
$I^1\times I^2$, and $x_3$ then determines the dyadic cube $I^3$. If
$I,J\in\mathcal G_{x_3}(\theta)$ and
$I^1\times I^2\subset J^1\times J^2$, the same relation gives
$\ell(I^3)\leq\ell(J^3)$. Since both third-coordinate cubes contain $x_3$, we have
$I^3\subset J^3$, and hence $I\subset J$. The incomparability of $\mathcal U$ forces
$I=J$. Thus $\mathcal B_{x_3}(\theta)$ is pairwise incomparable. Lemma
\ref{lem:product-sparse-rey} now gives
\begin{equation}\label{eq:diagonal-envelope-selected-exponential}
	\int_{\R^{d_1+d_2+d_3}}
	\left[e^{t_1h_{\mathcal G(\theta)}(x)}-1\right]\,\ud x
	\leq\frac1{16}|\operatorname{sh}(\mathcal G(\theta))|
	\leq\frac1{16}|\Omega|.
\end{equation}

The usual convex transfer in Lemma
\ref{lem:inclusion-probabilities-to-convex-overlap} now completes the proof. Apply it on
this probability space with $w(I)=W_w(I;\Omega)$,
$\kappa=\|w\|_{\mathrm{env}}^{-1}$ and $\Phi(u)=e^{t_1u}-1$, using
\eqref{eq:diagonal-envelope-marginals} and
\eqref{eq:diagonal-envelope-selected-exponential}. It selects
$\theta_0\in\Theta_v$ such that the $1/4$-sparse family
$\mathcal G_0:=\mathcal G(\theta_0)$ satisfies
\eqref{eq:diagonal-envelope-sparse-exponential}.
\end{proof}

\section{One-depth Zygmund packing without global incomparability}\label{sec:geometry}

In the bi-parameter setting it is already known that a one-depth theorem does not require
global incomparability: one may instead assume, for example, maximality in only one
coordinate direction; compare \cites{Pipher1986, CLMP2006} and
\cite{Hytonen2014}*{Section 8.1}. The following formulation isolates the fixed-coordinate
disjointness hypothesis implicit in standard one-depth proofs. One-sided maximality and
incomparability are generally not comparable, but both imply this strictly weaker
condition.

This section treats only the corresponding Zygmund question. For pairwise incomparable
Zygmund families, the admissible-weight theorems already contain the one-depth estimates:
one simply takes $w(a,b)=\omega(a)$, for which
$\|w\|_{\mathrm{ft}}=\|\omega\|_{\ell^1(\N)}$. The second lemma below shows that either
incomparability or one-sided Zygmund maximality implies fixed-coordinate disjointness on
the Zygmund boundary. Together with the usual embeddedness tail estimate, this places the
result directly within the exclusion-packing framework of Lemma
\ref{lem:exclusion-packing}.
The corresponding two-depth question---whether the fixed-total characterization persists
under a natural hypothesis weaker than global incomparability---is left open.

\subsection{Boundary maximality and fixed-coordinate disjointness}

For $I\in\calD_Z$ and $k\in\N$, define the two diagonal ancestors
\[
	I_{13}^{(k)}:=(I^1)^{(k)}\times I^2\times(I^3)^{(k)},
	\qquad
	I_{23}^{(k)}:=I^1\times(I^2)^{(k)}\times(I^3)^{(k)}.
\]
Both remain on the Zygmund boundary. For a measurable set $\Omega$, define
\begin{align*}
	\mathscr M_{13}(\Omega)
	 & :=\{I\in\calD_Z:I\subset\Omega,\ I_{13}^{(1)}\not\subset\Omega\}, \\
	\mathscr M_{23}(\Omega)
		 & :=\{I\in\calD_Z:I\subset\Omega,\ I_{23}^{(1)}\not\subset\Omega\}.
\end{align*}
Unlike embeddedness, these two maximality conditions are measured in $\Omega$ itself, not in
its halo.
Let $\mathscr M_Z(\Omega)$ denote the family of rectangles that are maximal under inclusion
among the Zygmund rectangles contained in $\Omega$.

\begin{lem}[Boundary maximality and incomparability]
	\label{lem:two-maximalities-incomparable}
	One has
	\[
		\mathscr M_Z(\Omega)
		=\mathscr M_{13}(\Omega)\cap\mathscr M_{23}(\Omega).
	\]
	In particular,
	$\mathscr M_{13}(\Omega)\cap\mathscr M_{23}(\Omega)$ is pairwise incomparable.
\end{lem}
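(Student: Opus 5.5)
The plan is to prove the two inclusions separately; the incomparability assertion then follows at once, because any family of inclusion-maximal elements is pairwise incomparable.

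\emph{The inclusion $\mathscr M_Z(\Omega)\subset\mathscr M_{13}(\Omega)\cap\mathscr M_{23}(\Omega)$.} This is immediate. If $I\in\mathscr M_Z(\Omega)$, then $I\subset\Omega$. Both $I_{13}^{(1)}$ and $I_{23}^{(1)}$ are Zygmund rectangles strictly containing $I$. If either were contained in $\Omega$, maximality of $I$ would be violated.

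\emph{The inclusion $\mathscr M_{13}(\Omega)\cap\mathscr M_{23}(\Omega)\subset\mathscr M_Z(\Omega)$.} Let $I$ lie in the intersection, and suppose some $K\in\calD_Z$ satisfies $I\subsetneq K\subset\Omega$. We want a contradiction. Write $\nu_i$ for generations and set
\[
	k_1:=\nu_1(I)-\nu_1(K)\geq0,
	\qquad
	k_2:=\nu_2(I)-\nu_2(K)\geq0.
\]
The Zygmund relation gives $\nu_3(I)-\nu_3(K)=k_1+k_2$. Since the containment is strict, $k_1+k_2\geq1$, so one of $k_1,k_2$ is positive; say $k_1\geq1$. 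The key step is to show that $I_{13}^{(1)}\subset K$. By dyadic nesting, $(I^1)^{(1)}\subset (I^1)^{(k_1)}=K^1$, and trivially $I^2\subset K^2$. For the third coordinate, $(I^3)^{(1)}$ is the first ancestor of $I^3$, while $K^3=(I^3)^{(k_1+k_2)}$ with $k_1+k_2\geq1$; hence $(I^3)^{(1)}\subset K^3$. Therefore $I_{13}^{(1)}\subset K\subset\Omega$, which contradicts $I\in\mathscr M_{13}(\Omega)$. The case $k_2\geq1$ is symmetric and contradicts $I\in\mathscr M_{23}(\Omega)$. So no such $K$ exists, and $I\in\mathscr M_Z(\Omega)$.

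The main point requiring care is the third-coordinate containment. It uses exactly that the third-coordinate generation gap equals $k_1+k_2$, which dominates each individual gap, and that the ancestors of a dyadic cube are nested. Everything else is bookkeeping. No halo or maximal-function considerations enter, because both maximality conditions are measured in $\Omega$ itself.
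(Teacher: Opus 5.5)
Your proposal is correct and follows essentially the same route as the paper. The forward inclusion uses the fact that the two diagonal parents are strictly larger Zygmund rectangles. For the reverse inclusion, the boundary relation forces $K^3=(I^3)^{(k_1+k_2)}$, so a positive first or second gap puts $I_{13}^{(1)}$ or $I_{23}^{(1)}$ inside $K$; the paper phrases this as $J=(I_{13}^{(a)})_{23}^{(b)}$ and checks the same coordinatewise containment you do.
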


\begin{proof}
	If $I\in\mathscr M_Z(\Omega)$, then neither diagonal parent of $I$ is
	contained in $\Omega$, since both are strictly larger Zygmund rectangles. Hence
	$I\in\mathscr M_{13}(\Omega)\cap\mathscr M_{23}(\Omega)$.

	Conversely, let
	$I\in\mathscr M_{13}(\Omega)\cap\mathscr M_{23}(\Omega)$, and let $J\in\calD_Z$
	strictly contain $I$. Write
	$J^1=(I^1)^{(a)}$ and $J^2=(I^2)^{(b)}$. The boundary relation gives
	$J^3=(I^3)^{(a+b)}$, and hence $J=(I_{13}^{(a)})_{23}^{(b)}$, where
	$a,b\geq0$ are not both zero. If $a\geq1$, then
	$I_{13}^{(1)}\subset J$; if $b\geq1$, then $I_{23}^{(1)}\subset J$. At least one of
	these alternatives holds, and the corresponding parent is not contained in $\Omega$.
	Therefore $J\not\subset\Omega$, so $I\in\mathscr M_Z(\Omega)$. This proves the identity.
	If two distinct members of $\mathscr M_Z(\Omega)$ were comparable, the smaller one would
	not be maximal. Thus this family is pairwise incomparable.
\end{proof}

\begin{lem}[Fixed-coordinate order and disjointness on the Zygmund boundary]
	\label{lem:fixed-coordinate-boundary}
	If $I,J\in\calD_Z$ meet and $I^1=J^1$, then, after interchanging $I$ and $J$ if
	necessary, for some $m\geq0$,
	\begin{equation}\label{eq:coupled-order}
		J^2=(I^2)^{(m)},
		\qquad J^3=(I^3)^{(m)},
		\qquad J=I_{23}^{(m)}.
	\end{equation}
	Consequently, if $\mathcal U\subset\mathscr M_{23}(\Omega)$ or if
	$\mathcal U\subset\calD_Z$ is pairwise incomparable, then distinct rectangles in
	$\mathcal U$ with the same first-coordinate cube are disjoint.
\end{lem}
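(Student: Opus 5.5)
The first assertion is pure dyadic bookkeeping on the Zygmund boundary; the consequence then follows from the relevant maximality or incomparability hypothesis.

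\textbf{Step 1: the coupled order.} Suppose $I,J\in\calD_Z$ meet and $I^1=J^1$. Since $I$ and $J$ meet, $I^2$ and $J^2$ meet and are therefore dyadically nested. After interchanging $I$ and $J$ if necessary, we may assume $I^2\subset J^2$, so $J^2=(I^2)^{(m)}$ for some $m\geq0$. The Zygmund relation together with $\ell(I^1)=\ell(J^1)$ gives
\[
	\frac{\ell(J^3)}{\ell(I^3)}=\frac{\ell(J^1)\ell(J^2)}{\ell(I^1)\ell(I^2)}=2^{m d_2'},
\]
where the exponent is the factor matching $m$ generations. Because the grids are dyadic with side ratio $2$ per generation, this says exactly that $J^3$ is $m$ generations above $I^3$. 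The cubes $I^3$ and $J^3$ meet, so they are nested, and hence $J^3=(I^3)^{(m)}$. Combined with $I^1=J^1$, this gives $J=I_{23}^{(m)}$, which is \eqref{eq:coupled-order}.

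\textbf{Step 2: the consequence.} Let $I\neq J$ be distinct rectangles in $\mathcal U$ with $I^1=J^1$, and suppose for contradiction that they meet. By Step 1, after relabeling, $J=I_{23}^{(m)}$ with $m\geq1$, since $m=0$ would force $I=J$. In particular $I\subsetneq J$.
\begin{itemize}
	\item If $\mathcal U$ is pairwise incomparable, the strict inclusion $I\subsetneq J$ is already a contradiction.
	\item If $\mathcal U\subset\mathscr M_{23}(\Omega)$, then $I_{23}^{(1)}\subset I_{23}^{(m)}=J\subset\Omega$. This contradicts $I\in\mathscr M_{23}(\Omega)$, which requires $I_{23}^{(1)}\not\subset\Omega$.
\end{itemize}
In either case $I$ and $J$ must be disjoint.

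\textbf{Main obstacle.} The only delicate point is Step 1: one has to check that the third-coordinate generation gap equals the second-coordinate gap $m$. This holds because $\ell(I^1)=\ell(J^1)$ and the Zygmund boundary identity $\ell(I^3)=\ell(I^1)\ell(I^2)$ ties the two gaps together; the nesting of the meeting cubes $I^3$ and $J^3$ then pins down $J^3=(I^3)^{(m)}$ exactly.
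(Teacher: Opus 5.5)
Your proposal is correct and follows the paper's proof essentially verbatim: you nest the second-coordinate cubes, use the boundary identity with $I^1=J^1$ to get $\ell(J^3)=2^m\ell(I^3)$, nest the meeting third-coordinate cubes, and then rule out $m\geq1$ by $23$-maximality or by incomparability. One small slip to fix in Step 1: $\ell$ denotes side length, so $\ell(J^2)=2^m\ell(I^2)$ and the displayed ratio is simply $2^m$. The undefined exponent $m d_2'$ should be replaced by $m$, since a dimension-dependent exponent would not give the $m$-generation gap you go on to use.
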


\begin{proof}
	For the first assertion, after interchanging $I$ and $J$ assume that
	$\ell(I^2)\leq\ell(J^2)$. Dyadic nesting gives
	$J^2=(I^2)^{(m)}$ for some $m\geq0$. Since both rectangles satisfy the boundary
	identity and have the same first cube, $\ell(J^3)=2^m\ell(I^3)$. Their third-coordinate
	cubes meet, so $J^3=(I^3)^{(m)}$. This proves \eqref{eq:coupled-order}.

	Let $I,J\in\mathcal U$ meet and have the same first cube. By
	\eqref{eq:coupled-order}, after interchanging them if necessary,
	$J=I_{23}^{(m)}$. If $m\geq1$ and
	$\mathcal U\subset\mathscr M_{23}(\Omega)$, then
	$I_{23}^{(1)}\subset J\subset\Omega$, contrary to
	$I\in\mathscr M_{23}(\Omega)$. If instead $\mathcal U$ is pairwise incomparable,
	$m\geq1$ would give $I\subsetneq J$. Hence in either case $m=0$ and $I=J$.
	This proves the fixed-coordinate disjointness assertion.
\end{proof}

\subsection{One-depth packing with fixed-coordinate disjointness}\label{sec:one-depth}

The following theorem is stated directly under this common weaker condition. It applies to
the full sub-Zygmund basis: no boundary relation, incomparability or maximality is assumed.
That basis enters only because it contains every flat first-coordinate ancestor used to
define the depth.

\begin{thm}[One-depth packing in the sub-Zygmund basis]
	\label{thm:one-depth-scalar}
	Let $\Omega\subset\R^{d_1+d_2+d_3}$ be measurable and have finite measure, and let
	\[
		\mathcal U\subset\{I\in\calD_{\mathrm{sZ}}:I\subset\Omega\}.
	\]
	Suppose that distinct rectangles with the same first-coordinate cube are disjoint:
	\begin{equation}\label{eq:fixed-coordinate-disjointness}
		I,J\in\mathcal U,\quad I^1=J^1,\quad I\neq J
		\quad\Longrightarrow\quad I\cap J=\varnothing.
	\end{equation}
	If $\omega\colon\N\to[0,\infty)$ is decreasing, summable and nonzero, there is a $1/2$-sparse
	subfamily $\mathcal G\subset\mathcal U$ such that
	\begin{equation}\label{eq:one-depth-direct}
		\sum_{I\in\mathcal U}\omega(e_1(I;\Omega))|I|
		\leq2\|\omega\|_{\ell^1(\N)}
		|\operatorname{sh}(\mathcal G)|.
	\end{equation}
\end{thm}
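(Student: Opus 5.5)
The plan is to apply Lemma \ref{lem:exclusion-packing} directly to $\mathcal A=\mathcal U$, with $X=\Omega$, $P_I=I$, $a(I):=\nu_1(I)$ (the dyadic generation of $I^1$, so $\ell(I^1)=2^{-\nu_1(I)}$) and $b(I):=e_1(I;\Omega)$. For the exclusion family I would take the events $\mathsf S^\omega_{a,b}$ from Lemma \ref{lem:stationary-one-depth-events}. Their probabilities are $\omega(b)/\|\omega\|_{\ell^1(\N)}$ and depend only on $b$, which is exactly what we want, since $\nu_1$ ranges over $\Z$. Once the two geometric hypotheses are verified, the lemma produces $\omega_0$ and a $1/2$-sparse family $\mathcal G:=\mathcal S(\omega_0)$ such that $\sum_I\omega(e_1(I))|I|\le\|\omega\|_{\ell^1}\sum_{I\in\mathcal G}|I|$. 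The proof of that lemma makes the sets $E_I$ disjoint subsets of the $P_I=I$ with $|E_I|\ge|I|/2$, so $\sum_{I\in\mathcal G}|I|\le2|\operatorname{sh}(\mathcal G)|$. This gives \eqref{eq:one-depth-direct} with constant $2\|\omega\|_{\ell^1}$.

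\textbf{Hypothesis (i).} Fix $m$ and take $I\ne K$ with $\nu_1(I)=\nu_1(K)=m$. If they meet, their first-coordinate cubes meet and have the same size, so $I^1=K^1$. Then \eqref{eq:fixed-coordinate-disjointness} forces $I\cap K=\varnothing$, a contradiction.

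\textbf{Hypothesis (ii).} Fix $I$ and put $k:=e_1(I;\Omega)+1$. Every $K$ with $\nu_1(K)>\nu_1(I)+e_1(I;\Omega)$ that meets $I$ satisfies $\nu_1(I)-\nu_1(K)\ge k$, so $K^1\supset(I^1)^{(k)}$. I then need $K\cap I$ to be a full first-coordinate slab over its $(2,3)$-section. Since $K^1\supsetneq I^1$ and the cubes in the other coordinates are nested, $K\cap I=I^1\times(K^2\cap I^2)\times(K^3\cap I^3)$. Put $F:=\bigcup_K (K^2\cap I^2)\times(K^3\cap I^3)\subset I^2\times I^3$, so the union in \eqref{eq:exclusion-overlap-bound} meets $I$ in $I^1\times F$. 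Suppose $|F|\ge\frac12|I^2\times I^3|$. Each point of $F$ lies in some $K$ whose first cube contains $(I^1)^{(k)}$, hence $(I^1)^{(k)}\times F\subset\Omega$. Apply Lemma \ref{lem:half-slab-halo} with $\mathcal B=\calD_{\mathrm{sZ}}$, $Q=(I^1)^{(k)}\times I^2\times I^3=I_1^{(k)}$, $E^2=F$ and $L^1=I^1$, using $I\subset\Omega$. This gives $I_1^{(k)}\subset\widetilde\Omega$, contradicting the definition of $e_1$. Hence $|I\cap\bigcup_K K|=|I^1||F|<\frac12|I|$.

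The main point to check is that $Q=I_1^{(k)}$ lies in $\calD_{\mathrm{sZ}}$, so that the lemma applies with this basis. This holds because enlarging $I^1$ only increases $\ell(I^1)\ell(I^2)$ and preserves $\ell(I^3)\le\ell(I^1)\ell(I^2)$; it is the only place where the sub-Zygmund basis enters. A second point is that $a(I)$ takes values in $\Z$ rather than $\N$. That is why I use the stationary events on $\Z$ from Lemma \ref{lem:stationary-one-depth-events}, and Lemma \ref{lem:exclusion-packing} is indeed stated for exclusion families on subsets of $\Z$.
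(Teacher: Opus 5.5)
\textbf{The choice $a(I):=\nu_1(I)$ has the wrong sign, and with it hypothesis (ii) of Lemma \ref{lem:exclusion-packing} is false.}

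That hypothesis concerns the sets $P_K$ with $a(K)>a(I)+b(I)$. For your $a$, this means $\nu_1(K)>\nu_1(I)+e_1(I;\Omega)$. These are rectangles whose first cube is \emph{smaller} than $I^1$ by more than $e_1(I;\Omega)$ generations. Your next step, ``so $\nu_1(I)-\nu_1(K)\geq k$'', therefore does not follow. For such $K$, integrality gives the opposite, $\nu_1(I)-\nu_1(K)\leq -k$, and embeddedness gives no control over these rectangles.

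Here is a concrete counterexample. Take the lifted layered family of Proposition \ref{prop:zygmund-layered-test-family} with $N\geq1$. Rectangles with the same first cube lie in the same layer, and layers are partitions, so \eqref{eq:fixed-coordinate-disjointness} holds. A rectangle $I$ in layer $0$ has $\nu_1(I)=0=e_1(I;\Omega_N^Z)$. Every layer-$1$ rectangle $K$ has $\nu_1(K)=1>\nu_1(I)+e_1(I;\Omega_N^Z)$. These rectangles partition $\Omega_N^Z$, so they cover all of $I$, and \eqref{eq:exclusion-overlap-bound} fails.

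\textbf{The fix is to take $a(I):=-\nu_1(I)$, which is the paper's choice.} Then $a(K)>a(I)+b(I)$ means $\nu_1(K)<\nu_1(I)-e_1(I;\Omega)$. By integrality, $\nu_1(I)-\nu_1(K)\geq e_1(I;\Omega)+1=k$. Hence $K^1\supset(I^1)^{(k)}$ for every such $K$ meeting $I$.

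From there your slab argument goes through verbatim. Your set $F$ is the paper's $B_I$, and Lemma \ref{lem:half-slab-halo} is applied to $Q=I_1^{(k)}\in\calD_{\mathrm{sZ}}$ with $E^2=F$ and $L^1=I^1$.

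Nothing else depends on the sign. Hypothesis (i) is unaffected, and so is the final bookkeeping via $1/2$-sparseness. The probabilities $\omega(b)/\|\omega\|_{\ell^1(\N)}$ of the stationary events do not depend on $a$ either. With this correction your proof coincides with the paper's.
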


\begin{proof}
	Apply Lemma \ref{lem:exclusion-packing} with $X:=\Omega$, $P_I:=I$,
	$a(I):=-\nu_1(I)$ and $b(I):=e_1(I;\Omega)$, using the exclusion family from
	Lemma \ref{lem:stationary-one-depth-events}. If $a(I)=a(J)$, then $I^1$ and $J^1$
	have the same generation. Their first cubes are therefore either disjoint or equal; in
	the latter case \eqref{eq:fixed-coordinate-disjointness} applies. This proves condition~(i).

	For condition~(ii), fix $I\in\mathcal U$, put $d:=e_1(I;\Omega)$ and define
	\[
		B_I:=(I^2\times I^3)\cap
		\bigcup_{\substack{K\in\mathcal U\\
		\nu_1(K)<\nu_1(I)-d\\K\cap I\neq\varnothing}}
		(K^2\times K^3).
	\]
	For every $K$ occurring in this union, integrality of the generations gives
	$\nu_1(K)\leq\nu_1(I)-d-1$. Since $K^1\cap I^1\neq\varnothing$, dyadic nesting
	therefore gives $(I^1)^{(d+1)}\subset K^1$. As every such $K$ is contained in
	$\Omega$, we obtain
	\[
		E_I:=I\cap
		\bigcup_{\substack{K\in\mathcal U\\
		\nu_1(K)<\nu_1(I)-d}}K
		=I^1\times B_I,
		\qquad
		(I^1)^{(d+1)}\times B_I\subset\Omega.
	\]
	If $|E_I|>|I|/2$, then $|B_I|>|I^2\times I^3|/2$. Apply Lemma
	\ref{lem:half-slab-halo} with $\mathcal B=\calD_{\mathrm{sZ}}$, $G=\Omega$,
	$Q=I_1^{(d+1)}$, $E^2=B_I$, and $L^1=I^1$. The two slab inclusions follow from
	the preceding display and $I\subset\Omega$, so the lemma gives
	$I_1^{(d+1)}\subset\widetilde\Omega$, contradicting the definition of $d$. Therefore
	$|E_I|\leq|I|/2$, which is condition~(ii). Lemmas \ref{lem:exclusion-packing} and
	\ref{lem:stationary-one-depth-events} now give a $1/2$-sparse family $\mathcal G$ with
	\[
		\frac1{\|\omega\|_{\ell^1(\N)}}
		\sum_{I\in\mathcal U}\omega(e_1(I;\Omega))|I|
		\leq\sum_{I\in\mathcal G}|I|
		\leq2|\operatorname{sh}(\mathcal G)|,
	\]
	which proves \eqref{eq:one-depth-direct}.
\end{proof}

Theorem \ref{thm:one-depth-scalar} and Lemma
\ref{lem:fixed-coordinate-boundary} give the following boundary result.

\begin{cor}[One-depth packing on the Zygmund boundary]
	\label{cor:one-depth-incomparable}
	Let $\Omega\subset\R^{d_1+d_2+d_3}$ be measurable and have finite measure. Let
	$\mathcal U\subset\calD_Z$ be contained in $\Omega$, and let
	$\omega\colon\N\to[0,\infty)$ be decreasing, summable and nonzero. If $\mathcal U$ is pairwise
	incomparable, then the conclusion below holds for each $i=1,2$. The conclusion for
	$i=1$ also holds if instead $\mathcal U\subset\mathscr M_{23}(\Omega)$, and the
	conclusion for $i=2$ holds if instead $\mathcal U\subset\mathscr M_{13}(\Omega)$.
	Namely, there is a $1/2$-sparse subfamily $\mathcal G_i\subset\mathcal U$ such that
	\begin{equation}\label{eq:one-depth-incomparable}
		\sum_{I\in\mathcal U}\omega(e_i(I;\Omega))|I|
		\leq2\|\omega\|_{\ell^1(\N)}
		|\operatorname{sh}(\mathcal G_i)|.
	\end{equation}
\end{cor}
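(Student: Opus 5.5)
The plan is to derive the corollary directly from Theorem \ref{thm:one-depth-scalar}, using Lemma \ref{lem:fixed-coordinate-boundary} to verify the fixed-coordinate disjointness hypothesis \eqref{eq:fixed-coordinate-disjointness}. Since $\calD_Z\subset\calD_{\mathrm{sZ}}$, every family $\mathcal U\subset\calD_Z$ contained in $\Omega$ is admissible input for that theorem. The flat depth $e_1$ in the theorem is defined through the sub-Zygmund halo, and it coincides with the Zygmund-setting depth $e_1(I;\Omega)$. So for $i=1$ the only thing to check is that distinct members of $\mathcal U$ with the same first-coordinate cube are disjoint. The second part of Lemma \ref{lem:fixed-coordinate-boundary} gives exactly this, both when $\mathcal U$ is pairwise incomparable and when $\mathcal U\subset\mathscr M_{23}(\Omega)$. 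Theorem \ref{thm:one-depth-scalar} then yields a $1/2$-sparse $\mathcal G_1$ satisfying \eqref{eq:one-depth-incomparable} with $i=1$.

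For $i=2$, I would apply the same argument with the roles of the first and second coordinates interchanged. The Zygmund relation $\ell(I^3)=\ell(I^1)\ell(I^2)$ and the sub-Zygmund condition are both symmetric in the indices $1$ and $2$. Hence relabeling the coordinates maps $\calD_Z$, $\calD_{\mathrm{sZ}}$, the halo $\widetilde\Omega$, and the depth $e_2$ onto their counterparts $\calD_Z$, $\calD_{\mathrm{sZ}}$, $\widetilde\Omega$, and $e_1$. The relabeling also interchanges the diagonal ancestors $I_{13}^{(k)}$ and $I_{23}^{(k)}$, and therefore interchanges $\mathscr M_{13}(\Omega)$ and $\mathscr M_{23}(\Omega)$. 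The symmetric form of Lemma \ref{lem:fixed-coordinate-boundary} then states: if two rectangles meet and share the second-coordinate cube, then one equals $I_{13}^{(m)}$ of the other. This gives disjointness whenever $\mathcal U$ is incomparable or $\mathcal U\subset\mathscr M_{13}(\Omega)$. The symmetric form of Theorem \ref{thm:one-depth-scalar} then produces $\mathcal G_2$.

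I expect no real obstacle here. The only point needing care is confirming that the coordinate swap genuinely preserves every object involved: the dimensions $d_1,d_2$ are exchanged, and the proofs of the cited results use only the symmetric relation between the first two coordinates and the third. It might be cleaner to restate Theorem \ref{thm:one-depth-scalar} with $e_2$ and \eqref{eq:fixed-coordinate-disjointness} for second-coordinate cubes, rerunning its proof with $a(I)=-\nu_2(I)$ and $Q=I_2^{(d+1)}$. That proof transfers verbatim.
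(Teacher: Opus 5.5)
Your proposal is correct and follows the paper's own proof: Lemma \ref{lem:fixed-coordinate-boundary} supplies the fixed-coordinate disjointness that Theorem \ref{thm:one-depth-scalar} needs for $i=1$, under either incomparability or $\mathcal U\subset\mathscr M_{23}(\Omega)$. Interchanging the first two coordinates, which swaps $\mathscr M_{13}(\Omega)$ and $\mathscr M_{23}(\Omega)$, then gives the case $i=2$.
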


\begin{proof}
	Lemma \ref{lem:fixed-coordinate-boundary} gives the fixed-coordinate disjointness
	required by Theorem \ref{thm:one-depth-scalar} for $i=1$, under either incomparability
	or $23$-maximality. Applying the theorem yields \eqref{eq:one-depth-incomparable}
	with $i=1$. Interchanging the first two coordinates gives the $i=2$ conclusion under
	either incomparability or $13$-maximality.
\end{proof}

\section{Geometric boundaries of the theory}\label{sec:geometric-boundaries}

The fixed-total characterization already settles the sharp range of the depth weights and
the optimal order of their packing constants. We record several complementary geometric
observations: allowing initial rectangles beyond the Zygmund boundary forces full
summability, the sub-Zygmund and tri-parameter halos can produce arbitrarily different
depths, diagonal depths do not detect the relevant layers, and direct embeddedness cannot
replace embeddedness in a maximal-function halo.

\subsection{Beyond the Zygmund boundary: full summability}
\label{sec:subzygmund-obstruction}

The layered Zygmund family in Proposition \ref{prop:zygmund-layered-test-family} samples
one fixed-total line $a+b=N$. If the initial family may instead range through
$\calD_{\mathrm{sZ}}$, the additional layers sample every lattice point in the triangle
$a+b\leq N$.

\begin{prop}[Full summability beyond the Zygmund boundary]
\label{prop:subzygmund-family-failure}
In the standard dyadic grid on $\R^3$, let $\Omega=[0,1)^3$. For every integer
$N\geq0$, there is a finite pairwise incomparable family
$\mathcal S_N\subset\calD_{\mathrm{sZ}}$ of rectangles contained in $\Omega$ such that,
for every $w\colon\N^2\to[0,\infty)$,
\begin{equation}\label{eq:subzygmund-triangular-identity}
	\frac1{|\Omega|}\sum_{I\in\mathcal S_N}W_w(I;\Omega)|I|
	=\sum_{\substack{a,b\geq0\\a+b\leq N}}w(a,b).
\end{equation}
Consequently, any uniform two-depth packing estimate for sub-Zygmund initial families
forces $w\in\ell^1(\N^2)$.
\end{prop}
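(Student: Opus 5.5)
The plan is to adapt the layered construction of Proposition \ref{prop:layered-test-family}. We free up the third coordinate so that it absorbs the scale discrepancy, which lets every lattice point $(a,b)$ with $a+b\leq N$ index one layer. Fix $N\geq0$. For each pair $(a,b)$ with $a,b\geq0$ and $a+b\leq N$, let the $(a,b)$th layer consist of all dyadic rectangles $I=I^1\times I^2\times I^3\subset[0,1)^3$ with
\[
	\ell(I^1)=2^{-a},\qquad \ell(I^2)=2^{-b},\qquad \ell(I^3)=2^{-(2N-a-b)}.
\]
Let $\mathcal S_N$ be the union of these layers. The third side length is the one choice that does two jobs at once.

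First, the sub-Zygmund condition $\ell(I^3)\leq\ell(I^1)\ell(I^2)$ reads $2N-a-b\geq a+b$. This is exactly $a+b\leq N$, so $\mathcal S_N\subset\calD_{\mathrm{sZ}}$. Second, every member has the same volume $2^{-2N}$. Distinct members therefore cannot contain one another, so $\mathcal S_N$ is finite and pairwise incomparable. Each layer partitions $\Omega=[0,1)^3$. Moreover, two distinct layers give distinct rectangles, since the pair $(a,b)$ is read off from $\ell(I^1),\ell(I^2)$. Hence every $x\in\Omega$ lies in exactly one rectangle of each layer.

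The main step is to show that $e_1(I;\Omega)=a$ and $e_2(I;\Omega)=b$ for $I$ in the $(a,b)$th layer, where depths are computed in the sub-Zygmund halo. For the lower bound, note that $(I^1)^{(a)}=[0,1)$, so $I_1^{(a)}\subset\Omega\subset\widetilde\Omega$. Hence $e_1\geq a$, and symmetrically $e_2\geq b$. For the upper bound, pick a point $x$ with $x_1\in(1,2)$, $x_2\in\operatorname{int}(I^2)$ and $x_3\in\operatorname{int}(I^3)$. This point lies in $I_1^{(a+1)}$. Any $R\in\calD_{\mathrm{sZ}}$ through $x$ that meets $\Omega$ has $R^1$ meeting both $[0,1)$ and $(1,2)$, so dyadic nesting forces $R^1\supset[0,2)$. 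The product structure then gives
\[
	\frac{|R\cap\Omega|}{|R|}\leq\frac{|R^1\cap[0,1)|}{|R^1|}\leq\frac12 .
\]
Thus $M_{\calD_{\mathrm{sZ}}}1_\Omega(x)\leq1/2$, so $I_1^{(a+1)}\not\subset\widetilde\Omega$ and $e_1=a$; the second coordinate is symmetric. This is the same half-density computation as in Proposition \ref{prop:layered-test-family}. The only point needing care is that it holds for every basis rectangle, not only for Zygmund ones, and the argument above uses nothing beyond the first factor.

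Combining the partition property with these depths gives $\sum_{I\in\mathcal S_N}W_w(I;\Omega)1_I(x)=\sum_{a+b\leq N}w(a,b)$ for every $x\in\Omega$. Integrating over $\Omega$ yields \eqref{eq:subzygmund-triangular-identity}. If a uniform estimate with constant $C_w$ held, then $\sum_{a+b\leq N}w(a,b)\leq C_w$ for every $N$. Letting $N\to\infty$ gives $\|w\|_{\ell^1(\N^2)}\leq C_w$.
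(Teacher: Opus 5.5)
Your proposal is correct and is essentially the paper's own proof. Your family, with side generations $a$, $b$, $2N-a-b$ and $a+b\leq N$, is exactly the paper's $\mathcal S_N$, which the paper writes via $n_1+n_2+n_3=2N$ and $n_3\geq n_1+n_2$; the equal-volume incomparability, the per-layer partition, the depth computation through a point with $x_i\in(1,2)$ using only the enlarged coordinate, and the final summation all match.
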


\begin{proof}
Fix $N\geq0$ and let
\[
	\mathcal S_N:=\left\{I^1\times I^2\times I^3\subset\Omega:
	\ell(I^j)=2^{-n_j},\quad n_1+n_2+n_3=2N,\quad n_3\geq n_1+n_2\right\},
\]
where $n_1,n_2,n_3\in\N$. Every member has volume $2^{-2N}$, so
$\mathcal S_N\subset\calD_{\mathrm{sZ}}$ is pairwise incomparable. For each fixed scale
triple, the corresponding rectangles partition $\Omega$.

For $i=1,2$, we have $I_i^{(n_i)}\subset\Omega$. Choose
$x\in I_i^{(n_i+1)}$ with $x_i\in(1,2)$ and
$x_j\in\operatorname{int}(I^j)$ for $j\neq i$. If
$Q\in\calD_{\mathrm{sZ}}$ contains $x$ and meets $\Omega$, dyadic nesting gives
$Q^i\supset[0,2)$, and therefore
\[
	\ave{1_\Omega}_Q
	\leq\frac{|Q^i\cap[0,1)|}{|Q^i|}
	\leq\frac12.
\]
Thus $x\notin\widetilde\Omega$ and
$e_i(I;\Omega)=n_i$ for $i=1,2$.

Put $m=n_1+n_2$. The conditions defining $\mathcal S_N$ are equivalent to
$0\leq m\leq N$, $0\leq n_1\leq m$, $n_2=m-n_1$, and $n_3=2N-m$. The partition at each
scale triple now gives
\[
	\frac1{|\Omega|}\sum_{I\in\mathcal S_N}W_w(I;\Omega)|I|
	=\sum_{m=0}^{N}\sum_{n_1=0}^{m}w(n_1,m-n_1)
	=\sum_{\substack{a,b\geq0\\a+b\leq N}}w(a,b),
\]
which is \eqref{eq:subzygmund-triangular-identity}.
\end{proof}

The same identity holds when the depths are measured in the unrestricted tri-parameter
halo, since the density estimate in the proof uses only the enlarged coordinate. In
particular, if $\omega\colon\N\to[0,\infty)$ is decreasing and nonzero, then taking
$w(a,b)=\omega(a)$ in \eqref{eq:subzygmund-triangular-identity} gives the lower bound
$(N+1)\omega(0)$. Thus no nontrivial one-depth estimate holds beyond the Zygmund boundary.

On the Zygmund boundary, by contrast, $n_3=n_1+n_2$ forces $n_1+n_2=N$, leaving
precisely the fixed-total line used in Proposition
\ref{prop:zygmund-layered-test-family}.

\subsection{Sub-Zygmund versus tri-parameter halo depths}

The gap between depths measured in the sub-Zygmund and tri-parameter halos can be
arbitrarily large. Denote these depths by $e_i^{\mathrm{sZ}}$ and
$e_i^{\mathrm{tri}}$, respectively. In the standard dyadic grid, for $N\geq1$, set
\[
	F_N=[0,2^{-N})\times[0,1/2)\times[0,2^{-(N+1)}),
	\qquad
	G=[0,1)\times[0,1/2)\times[1/2,1),
\]
and $\Omega_N:=F_N\cup G$. Then $F_N\in\calD_Z$, and a direct density calculation gives
\[
	e_1^{\mathrm{sZ}}(F_N;\Omega_N)=0,
	\qquad
	e_1^{\mathrm{tri}}(F_N;\Omega_N)=N.
\]

\subsection{Flat versus diagonal depths}

For $I\in\calD_Z$, define the diagonal depths
\[
	e_{13}(I;\Omega):=\max\{k\geq0:I_{13}^{(k)}\subset\widetilde\Omega\},
	\qquad
	e_{23}(I;\Omega):=\max\{k\geq0:I_{23}^{(k)}\subset\widetilde\Omega\}.
\]
For the lifted family $\mathcal U_N^Z$ of Proposition
\ref{prop:zygmund-layered-test-family}, both diagonal depths vanish:
\[
	e_{13}(J;\Omega_N^Z)=e_{23}(J;\Omega_N^Z)=0,
	\qquad J\in\mathcal U_N^Z.
\]
Indeed, choose $x\in J_{13}^{(1)}$ with its first two coordinates in the interiors of
$J^1$ and $J^2$ and its third coordinate in the other child of $(K^3)^{(1)}$. Every
$Q\in\calD_{\mathrm{sZ}}$ through $x$ that meets
$\Omega_N^Z=\Omega\times K^3$ contains $(K^3)^{(1)}$ in its third coordinate. Hence
\[
	\frac{|Q\cap\Omega_N^Z|}{|Q|}
	\leq\frac{|Q^3\cap K^3|}{|Q^3|}
	\leq\frac12.
\]
Thus $x\notin\widetilde{\Omega_N^Z}$ and $e_{13}(J;\Omega_N^Z)=0$; the other direction
is symmetric. Yet every one of the $N+1$ layers partitions $\Omega_N^Z$, and therefore
\[
	\sum_{J\in\mathcal U_N^Z}|J|
	=(N+1)|\Omega_N^Z|.
\]
Thus diagonal depths provide no decay for these $N+1$ overlapping layers, whereas the flat
depths $n$ and $N-n$ in \eqref{eq:zygmund-layer-depths} distinguish them.

\subsection{Direct embeddedness versus halo embeddedness}

We finish with a brief observation, independent of the Zygmund geometry, explaining why
halo embeddedness cannot in general be replaced by direct containment. For a
bi-parameter dyadic rectangle $I$ contained in a finite-measure set $\Omega$, define
\[
	e_i^{\mathrm{dir}}(I;\Omega)
	:=\max\{k\geq0:|I_i^{(k)}\setminus\Omega|=0\},
	\qquad i=1,2.
\]

\begin{exmp}[Direct embeddedness does not suffice]
	For every integer $N\geq1$, one can find a finite family
	$\mathcal U_N^{\mathrm{dir}}$ of bi-parameter dyadic rectangles and a set
	$\Omega_N^{\mathrm{dir}}=\operatorname{sh}(\mathcal U_N^{\mathrm{dir}})\subset[0,1)^2$
	such that every member of $\mathcal U_N^{\mathrm{dir}}$ is maximal among the dyadic
	rectangles contained in $\Omega_N^{\mathrm{dir}}$ and
	\[
		e_1^{\mathrm{dir}}(I;\Omega_N^{\mathrm{dir}})
		=e_2^{\mathrm{dir}}(I;\Omega_N^{\mathrm{dir}})=0,
		\qquad I\in\mathcal U_N^{\mathrm{dir}},
	\]
	whereas
	\[
		\sum_{I\in\mathcal U_N^{\mathrm{dir}}}|I|=\frac N2,
		\qquad |\Omega_N^{\mathrm{dir}}|\leq1.
	\]
	Consequently, direct-depth packing fails for every weight $w$ with $w(0,0)>0$, even
	under maximality and hence pairwise incomparability.
\end{exmp}

\begin{proof}
	Put $Q=[0,1)^2$. For $0\leq j<2^N$, let $\operatorname{rev}_N(j)$ be the integer
	obtained by reversing the $N$ binary digits of $j$, and set
	\[
		H_N:=\left\{\left(\frac{j+1/3}{2^N},
		\frac{\operatorname{rev}_N(j)+1/3}{2^N}\right):0\leq j<2^N\right\}.
	\]
	This is the classical van der Corput dyadic net translated by
	$(2^{-N}/3,2^{-N}/3)$; see \cite{BLPV2009}*{Section 3}. It has the elementary
	one-point property
	\begin{equation}\label{eq:one-point-property}
		\#(P\cap H_N)=1
		\quad\text{for every dyadic rectangle }P\subset Q\text{ with }|P|=2^{-N}.
	\end{equation}
	Indeed, if the side lengths of $P$ are $2^{-a}$ and $2^{-(N-a)}$, its first side fixes
	the first $a$ binary digits of $j$, while its second side fixes the remaining digits
	through the reversal. The shift ensures that no point of $H_N$ lies on a dyadic boundary.

	For $1\leq a\leq N$, put $b=N+1-a$ and let
	\[
		\mathcal U_{N,a}^{\mathrm{dir}}
		:=\{I=I^1\times I^2\subset Q:
		|I^1|=2^{-a},\ |I^2|=2^{-b},\ I\cap H_N=\varnothing\}.
	\]
	The dyadic rectangles with these side lengths partition $Q$ into $2^{N+1}$ rectangles.
	Each contains at most one point of $H_N$, since each coordinate parent has area $2^{-N}$
	and hence contains exactly one point by \eqref{eq:one-point-property}. Since the $2^N$
	points of $H_N$ avoid the boundaries of this partition, precisely $2^N$ rectangles
	contain one such point and the remaining $2^N$ avoid $H_N$. Therefore
	\[
		\sum_{I\in\mathcal U_{N,a}^{\mathrm{dir}}}|I|=\frac12.
	\]
	Set
	$\mathcal U_N^{\mathrm{dir}}:=\bigcup_{a=1}^N\mathcal U_{N,a}^{\mathrm{dir}}$ and
	$\Omega_N^{\mathrm{dir}}:=\operatorname{sh}(\mathcal U_N^{\mathrm{dir}})$. Summing the
	last identity gives the asserted mass, and $\Omega_N^{\mathrm{dir}}\subset Q$ gives its
	measure bound.

	Fix $I\in\mathcal U_{N,a}^{\mathrm{dir}}$. Both coordinate parents remain in $Q$ and
	have area $2^{-N}$, so each contains a point of $H_N$. Since the selected family is
	finite, its members avoid $H_N$, and no point of $H_N$ lies on a dyadic boundary, a small
	positive-measure neighborhood around each point inside the corresponding parent misses
	$\Omega_N^{\mathrm{dir}}$. Thus both direct depths of $I$ are zero. Moreover, every
	dyadic rectangle strictly containing $I$ contains at least one of its coordinate parents
	and therefore cannot be contained in $\Omega_N^{\mathrm{dir}}$. Hence $I$ is maximal.
\end{proof}

This is precisely the obstruction avoided by measuring embeddedness in the halo.

\bibliography{references}

\end{document}